\theoremstyle{plain}
\newtheorem{thm}{THEOREM}[section]
\newtheorem{lm}[thm]{LEMMA}
\newtheorem{prop}[thm]{PROPOSITION}
\theoremstyle{definition}
\newtheorem{defi}[thm]{DEFINITION}
\newtheorem{remark}[thm]{Remark}
\newtheorem{assumption}[thm]{Assumption}
\theoremstyle{remark}
\newcommand{\bilddir}{bilder}
\newcommand{\R}{\mathbb{R}}
\newcommand{\Ee}{\mathbb{E}}
\newcommand{\Eetn}{\mathbb{E}_{\tau_n}}
\newcommand{\Pe}{\mathbb{P}}
\newcommand{\bigoh}{\mathcal{O}}
\newcommand{\symn}{\mathfrak{S}_n}
\newcommand{\one}{\mathbbm{1}}
\newcommand{\SEn}{S_{E_n}}
\newcommand{\SEns}{S_{n}}
\newcommand{\SEne}{S_{E_n,\epsilon}}
\newcommand{\SEnes}{S_{n,\epsilon_n}}
\newcommand{\di }{\mathrm{d}}
\newcommand{\pfneps}{\mathcal{Z}_{n,\epsilon}}
\newcommand{\tz}{\tilde{z}}
\newcommand{\tx}{\tilde{x}}
\title{A Kac Model with Exclusion}
\author{Eric Carlen$^{(1)}$ and Bernt Wennberg$^{(2)}$ }
\begin{document}

\maketitle

\begin{center}
(1) Department of Mathematics, 
Rutgers University \\
110 Frelinghuysen Rd., Piscataway NJ 08854-8019, USA\\
email: carlen@math.rutgers.edu \\
\bigskip

(2) Department of Mathematical Sciences, \\
Chalmers University of Technology and University of Gothenburg\\
SE41296 G\"oteborg, Sweden \\
email: wennberg@chalmers.se

\end{center}

\begin{abstract}  We consider a one dimension Kac model with
  conservation of energy and an exclusion rule:  Fix  a number of
  particles $n$, and an energy $E>0$. Let each of the particles have
  an energy $x_j \geq 0$,  with $\sum_{j=1}^n x_j = E$.   For some  
$\epsilon$, the allowed configurations $(x_1,\dots,x_n)$ are those
that satisfy $|x_i - x_j| \geq \epsilon$ for all $i\neq j$.  At each
step of the process, a pair $(i,j)$ of particles is selected uniformly
at random, and then they ``collide'',  and there is a repartition of
their total energy $x_i + x_j$ between them producing new energies
$x^*_i$ and $x^*_j$ with $x^*_i + x^*_j = x_i + x_j$, but with the
restriction that exclusion rule is still observed for the new pair of
energies.   
This process bears some resemblance to 
Kac models for Fermions in which the exclusion represents the effects
of the Pauli exclusion principle.  However, the ``non-quantized''
exclusion rule here, with only a lower bound on the gaps, introduces
interesting novel features, 
and a detailed notion of Kac's chaos is required to derive an evolution
equation for the evolution of rescaled empirical measures for the
process, as we show here. 
 \end{abstract}

\section{Introduction}
\label{sec:intro}

The first attempts to formulate kinetic equations for colliding
particles that satisfy Boson or Fermion statstics go back at least to
the works of 
Nordheim~\cite{Nordheim1928} and Uehling and
Uhlenbeck~\cite{UehlingUhlenbeck1933}. To make a rigorous derivation of
these equations starting from the Schrödinger equation for a large
system of particles has proven very
difficult. See~\cite{Benedetto_etal2007} for a review. To understand
the classical spatially homogeneous Boltzmann equation Mark Kac
introduced a Markov 
jump process to mimic a real $n$-particle system and from this model
he could rigorously derive a simplified (one dimensional) Boltzmann
equation~\cite{Kac1956}. A similar kind of jump process has been
studied by Colangeli et al.~\cite{Colangeli_etal2015}, who derive a
kinetic equation from a particle system with discretized phasespace
with exclusion.

We investigate a Kac model on the simplex with exclusion, but without
dividing the simplex into cells, paying close attention to questions
concerning Kac's notion of chaos for the model. 
Before we introduce our model with exclusion, it will be helpful to
recall Kac's notion of chaos in the context of the corresponding
model, in which states are characterized by their energy only, without
exclusion.  

Consider a system of $n$ (indistinguishable) particles with a total
energy $E_n$, and assume that the state of a particle is determined by
its energy $x_j \geq 0$. The phase space of this system is then the simplex
\begin{align}
  \label{eq_01}
  \SEn := \left\{ (x_1,\dots,x_n)\in \R^n_+ \ :\
  \textstyle{\sum_{j=1}^n x_j }= E_n\right\}\ .
\end{align}
 Let  $\sigma_n$  denote  the uniform probability measure in $\SEn$. 
 In its simplest form, the Kac walk on the simplex $\SEn$ is the
 process in which binary collisions  occur in a Poisson stream of jump
 times, with the expected waiting time between jumps being $1/n$, and when a
 jump occurs, a pair $(i,j)$, $1\leq i < j \leq n$ is selected
 uniformly at random, and then the energy of the pair is redistributed
 by the ``collision'', a new  energy $x_i^*$ for the $i$-th particle is
 selected uniformly at random from $[0,x_i+x_j]$, and then $x_j^*$ is
 fixed by $x_i^*+x_j^* = x_i+x_j$. 
It is easy to see that the uniform probability measure is the unique
invariant measure for this process, and the single particle 
marginals in equilibrium are certain beta distributions. The rate of
approach to equilibrium has been studied by Giroux and
Ferland~\cite{GirouxFerland2008}.  The original
Kac process \cite{Kac1956} takes place on the $n-1$-dimensional sphere
consisting of vectors $(v_1,\dots,v_n)$ 
such that $\sum_{j=1}^n v_j^2 = E_n$. The process described above is
the image of the process on the sphere under the change of variables
$x_j = v_j^2$. 

As Kac discovered, the Kac process on the sphere {\em propagates
  chaos}, and it follows readily that the process on the simplex does
as well.  This means the following: 
For any probability density $F_n$ with respect to the uniform
probability  measure on $\SEn$, consider the empirical distribution 
\begin{equation}\label{fk1}
\mu_n=\frac{1}{n}\sum_{j=1}^n \delta(x - \tilde{x}_j)  \quad{\rm
  where}\quad  \tilde{x}_j = \frac{n}{E_n}x_j\ , 
\end{equation}
where $(x_1,\dots,x_n)$ is distributed according to $F_n$. Note that
\begin{equation}\label{fk2}
\int_0^\infty x{\rm d}\mu_n = 1
\end{equation}
for every $(\tilde x_1,\dots\tilde x_n)\in \SEn$.
The notion of  chaos concerns sequences, indexed by $n$ of
probability measues or  
densities on $\SEn$. We use upper case letters, e.g. $F_n$, to denote
such densities.   
While in some contexts it is natural to reserve upper-case lets for
the distribution funstions of probabilitiy densities on the 
line, here such distribution functions do not come into play, and it
is more natural to use upper and lower case to distinguish between
probability densities on the high dimensional space  
$\SEn$, and probability densities on $\R_+$.

Now let $g(x)$ be any probability density on $\R_+$ with
$\int_0^\infty x g(x){\rm d}x =1$.  
A sequence $\{F_n\}$ of probability densities on $\SEn$ is called {\em
  $g$-chaotic} in the sense of Kac in case the sequence $\{\mu_n\}$ of
empirical distributions as specified above converges in probability to
 $g(x){\rm d}x$. {  The notion of chaos is often presented directly in
terms of the probability measures: Consider a sequence of probability
measures $\{ m_n \}_{n}^{\infty}$ where the $m_n$  are  symmetric probability
distributions on $E^n$, the $n$-fold product of a metric space
$E$. Then  $\{ m_n \}_{n}^{\infty}$ is said to be $m$-chaotic for some
probability measure $m$ on $E$, if for every $k\ge 1$ and
functions $\phi_j\in C(E)$, $j=1,...,k$ the following limit holds:
\begin{equation}
  \label{eq:kacchaos}
  \lim\nolimits_{n=k}^{\infty}\int_E\cdots\int_E
  \phi_1(x_1)\cdots\phi_k(x_k)m_n(dx_1,...,dx_n) - \prod_{j=1}^k\int_E
  \phi_j(x) m(dx) =0\,.
\end{equation}
In fact, this definition is equivalent to the definition given in
terms of the empirical measures, as proven e.g. in~\cite{Sznitman1991}\,.}

Kac's main result in \cite{Kac1956} (for the spherical case) is that
if one starts with a chaotic sequence $\{F_n\}$ of initial data that
is $g$-chaotic,  and if for each $t>0$ one lets  $\{F_{n,t}\}$ denote
the sequence  of densities resulting from running the evolution for a
time $t$, then this sequence is $g_t$-chaotic for some density $g_t$,
and moreover, $g_t$ is the unique solution of a certain non-linear
Boltzmann-like equations starting from the initial data $g$. Thus,
this Boltzmann-like equation gives a complete description, in the
large $n$ limit of the evolution of the scaled empirical distribution
under the Kac process provided one starts with chaotic initial data.  

Since $\SEn$ is very close to being a product space, it is possible to
construct $g$-chaotic initial data for any probability density $g$
satisfying $\int_0^\infty xg(x){\rm d}x =1$, $\int_0^\infty x^2
g(x){\rm d}x <\infty$ and $g\in L^p(\R_+)$ for some $p>1$: 
One takes $\prod_{j=1}^n g(\tilde x_j)$, and restricts it to simplex
$\SEn$, and normalizes \cite{Carlen_etal2010}. By the Central Limit
Theorem, under $\prod_{j=1}^n g(x_j)$,  $\sum_{j=1}\tilde x_j$ is with
high probability very close to $n$, and so the mass is tightly
concentrated on $\SEn$. As long as one does not look at too many
coordinates at once, one cannot see the effects of the restriction. In
the physics literature, this is known as the {\em equivalence of
  ensembles}.   A
related result can be found in~\cite{Sznitman1991}, where $g$ is a
density  on $\R^k$, and the simplex is replaced by a set $x_1+\cdots +
x_n=n a\in\R^k$.

The restrictions that $\int_0^\infty x^2 g(x){\rm d}x <\infty$ and
$g\in L^p(\R_+)$ for some $p>1$ may then be removed in a limiting
process \cite{Carlen_etal2010}, and thus one has a construction of
chaotic initial data for every meaningful initial density $g$. The
corresponding nonlinear Boltzmann-like equation that governs the
evolution of the large $n$ empirical measure may then be studied in
terms of the linear Kolmogorov equation associated to the Kac process
on $\SEn$. 

That is, Kac had found an interesting way to study, by probabilistic
means, a class of non-linear equations of a type that arise in kinetic
theory. The method relies on the introduction of a family of
stochastic processes indexed by $n$, the number of particles. Because
of constraints such as $\sum_{j=1}^n x_j = E_n$ that correspond to
conservation laws in the particle system, the $x_j$ are not
independent, but their dependence is weak enough, for a wide class of
sequences of probability measures including  $\{\sigma_n\}$, that the
empirical measure in \eqref{fk1} becomes non-random as $n\to \infty$.  

In the  model introduced next, we consider another type of kinematic
constraint. In addition to the conservation of energy, we impose an
exclusion condition. This brings dependencies of a new type into
consideration, and we show that Kac's notion of chaos is not enough to
identify the evolution of  a limiting density. Therefore a new approach
is required, and a stronger notion of chaos,  and one such  
approach is developed here.

\subsection{The incorporation of exclusion}

For Fermions, the Pauli exclusion principle asserts that a state (here
characterized by its energy) only can be occupied by at most one
particle.   In this continuous setting, we model this without
``quantizing'' the state space,  by requiring 
that for all pairs of particles, we have $|x_j-x_k|>\epsilon$ for
some $\epsilon> 0$. We define 
\begin{align}\label{state}
\SEne := \left\{ (x_1,\dots,x_n)\in \R^n_+ \ :\
  \textstyle{\sum_{j=1}^n x_j }= E_n\ , |x_j-x_k|>\epsilon \ {\rm
  for\ all} \ i\neq j\right\}\ ,
\end{align}
and assuming that $E_n > \epsilon n(n-1)/2$ so that $\SEne \neq
\emptyset$, we let $\di\sigma_{n,\epsilon}$ denote the uniform probability
measure on $\SEne$.   

The process that we consider is the following: Again, the collision
times arrive in a Poisson stream with expected waiting time equal to
$1/n$, and again, when a jump time occurs, a pair $(i,j)$, $1\leq i <
j \leq n$ is selected uniformly at random. The energy of the two
particles is then reapportioned as before,  
with $x_i^*$ chosen uniformly from $[0,x_i+x_j]$ and then $x_j^* =
x_i+x_j - x_i^*$, except the jump only occurs if the new configuration  
$(x_1,\dots,x_i^*,\dots,x_j^*,\dots,x_n)$ of energy levels satisfies
the exclusion condition; i.e., only if it belongs to $\SEne$.   It is
easy to see that $\sigma_{n,\epsilon}$ is the invariant measure for
this process,  and since the process is reversible, it is natural to
refer to it as the {\em equilibrium measure}.  

While $\SEne$ is non-empty whenever $E_n > \epsilon n(n-1)/2$,  if
$E_n$ is not too much larger than this value, the spacing between most
levels will be very close to $\epsilon$.  Think of a long line of
parked cars with no marked spaces. For a new pair of cars to park,
they must both find gaps of sufficient width. If there is a constraint
on the sum of their distances from the start of the line, there may be
no way for them to park.  In terms of our model, if two cars pull out
and look for different spaces, it may be that their only option is to
return to the spaces they had (or to swap).  

We shall find interesting large $n$ limits only if the energies $E_n$
grow with $n$ in a certain way. Define 
%
\begin{equation}\label{fk3}
\alpha_n :=  \frac{\epsilon n (n-1)}{E_n}\,. 
\end{equation}
Then 
\begin{equation}\label{fk4}
  E_n - \frac{\epsilon n(n-1)}{2}= \left(1 - \frac{\alpha_n}{2}\right)E_n 
\end{equation}
is the {\em excess energy}, the  difference between the minimum energy
for a configuration of $n$ particles satisfying the exclusion
constraint and the available energy. Clearly we must have $0 \leq
\alpha_n\leq 2$. 
We shall be studying sequences of probability measures
$\{F_n\sigma_n\}$ on $\SEne$ with $E_n$ and $n$ related by 
\begin{equation}\label{fk5}
\lim_{n\to\infty}\alpha_n = \alpha \in ]0,2[\ .
\end{equation}
As before, we rescale the variables with
the average energy, 
\begin{align}\label{scale}
  \tx_j &= \frac{n}{E_n}x_j\,,
\end{align}
and define the empirical distribution
\begin{align}\label{empir}
\mu_n := \frac{1}{n}\sum_{j=1}^n \delta (x- \tilde{x}_j)\ .
\end{align}
We also need to rescale
$\epsilon$, and set 
\begin{equation}\label{fk8}
 \tilde{\epsilon}_n =\frac{\epsilon n}{ E_n}  = \frac{\alpha_n}{n-1}\, .
\end{equation}
Because, $\sum_{j=1}^n \tx_j = n$ for every $(x_1,\dots,x_n)\in
\SEne$,  one always has that 
\begin{equation}\label{fk6}
\int_0^\infty x{\rm d}\mu_n =1\ .
\end{equation}
The exclusion limits the amount of  mass the $\mu_n$ can assign to any
half open interval  $]a,b]$ in $\R_+$:   
There can be at most $(b-a)/\tilde{\epsilon}_n$ particles in this
interval, and hence 
\begin{equation}\label{fk8_12}
\int_{]a,b]}{\rm d}\mu_n  \leq  \frac{1}{n\tilde{\epsilon}_n}(b-a)  =
\frac{n-1}{n\alpha_n} (b-a) \ , 
\end{equation}

It follows from \eqref{fk8_12} that  if $\mu_n$ almost surely
converges vaguely  to $g(x){\rm  d}x$ along a sequence with $\alpha_n
\to \alpha$, then   
\begin{equation}\label{fk8b}
g(x) \leq \frac{1}{\alpha}\  \ ,
\end{equation}
and provided no mass escapes,
\begin{equation}\label{fk8c}
\int_0^\infty g(x){\rm d}x = 1 \ .
\end{equation}

{\em In what follows we will only use the rescaled variables $\tx_j$ and
$\tilde{\epsilon}_n$, but suppress the tildes from the
notation. { Moreover, in this scaling  $E_n=n$, and therfore
  $\SEne$ becomes $\SEnes$.}}

\medskip

At this point we can define a notion of chaos for our class of models:

\begin{defi}
  \label{def1}
Let $\alpha>0$ and let $f(x)$ be a probability density on $\R_+$. We
define a sequence $\{F_n\}_{n \geq 2}$ of probability measures on
$\SEnes$ to be $(\alpha,f)$-{\em   chaotic} if $(x_1,...,x_n)$ is
random with distribution $F_n$, and  
the empirical measures $\mu_n = \frac1n \sum_{j=1}^n \delta(x-x_j)$
converge in probability to $f(x){\rm d}x$ as $n\to \infty$ and 
$\alpha_n:= \epsilon n (n-1)/E_n \rightarrow \alpha$.   

Let ${\mathcal P}_t$ be the semigroup associated to a Markov process
on $\SEnes$, { that is,  $F_n(x,t)= {\mathcal P}_t F_n(x,0)$.}
Following Kac,
we say that the semigroup ${\mathcal P}_t$ {\em propagates chaos with
  parameter   $\alpha$} in case whenever $\{F_n(x,0)\}$
$(\alpha,f_0)$-chaotic, then  $\{F_n(x,t)\}$ is  $(\alpha,f_t)$-chaotic
for some probability density $f_t$ on $\R_+$.  
\end{defi}

In the Kac process that we study here, pairs of particle will interact
by redistributing their energies $x_i$ and $x_j$ to a new pair $x_i^*$
and $x_j^*$ with $x_i+x_j = x_i^*+x_j^*$ provided the gaps around
$x_i^*$ and $x_j^*$ are large enough for the exclusion constraint to
be satisfied. Let $x\in \R_+$. Then for all sufficiently large $n$,
and all $(x_1,\dots x_n)\in \SEnes$, $x < \max_{1\leq k \leq
  n}\{x_k\}$. Let  $x_{(j)}$  and $x_{(j+1)}$ be the pair of {\em
  consecutive} energies such that $x\in [x_{(j)},x_{(j+1)}[$. Define
the {\em gap at energy $x$} to be   
$$\zeta(x) := x_{(j+1)} - x_{(j)} - \frac{\alpha}{n-1}\ .$$
Only when $\zeta \geq  \frac{\alpha}{n-1} $ is it possible for an
interaction  to result in either $x_i^*\in [x_{(j)},x_{(j+1)}[$ or
$x_j^*\in [x_{(j)},x_{(j+1)}[$   since only in this case is the
minimum spacing $\frac{\alpha}{n-1}$ (in the scaled variable)
available above and below some energy in this interval.

It is probably intuitively clear, and will be shown later on, that the
evolution of the empirical density depends strongly on distribution of
the energy gaps: For a given probability density $f(x)$ as in
Definition\ref{def1}, and any $0 < \alpha < 2$, there are different
$(\alpha,f)$-chaotic sequences  $\{F_n\}_{n \geq 2}$ that have very
different gap distributions, and this will result in different sorts
of interactions being favored in the process, and thus to  different 
results for $f_t$ under the time evolution.  Thus, this definition as
it stands will not lead to a well-defined evolution equation for the
limiting density $f_t$. We must bring in information on the gaps.

\begin{defi}
  \label{def2}
Let a sequence $\{F_n\}_{n \geq 2}$  be $(\alpha,f)$-{\em
  chaotic} according to Definition~\ref{def1}. 
We say that 
 $\{F_n\}_{n \geq 2}$  is  $(\alpha,f)$-{\em
   chaotic in detail}  if for any $x\in R_+$, the random interval   
   $]x_{(j)}, x_{(j+1)}[$  that contains $x$,  the gap length
   $\zeta_{x,n}= 
  x_{(j+1)}- x_{(j)}- \alpha/(n-1)$ satisfies
  \begin{align}
    \label{eq:79gapdistA}
    \lim_{n\to\infty}\Pe[ (n-1) \zeta_{x,n}/\alpha > r]
    \rightarrow e^{-\frac{\alpha 
    f(x)}{1-\alpha f(x)} r}\,.
  \end{align}

We say that the semigroup ${\mathcal P}_t$ {\em propagates 
    detailed chaos with
  parameter   $\alpha$} in case whenever $\{F_n(x,0)\}$ is $(\alpha,
f_0)$ chaotic in detail, then the same holds for 
$\{F_n(x,t)\}$ for some probability density $f_t$ on $\R_+$.   
\end{defi}

As we shall show below, this particular gap distribution specified in
\eqref{eq:79gapdistA} is the only one that is possible: If the gap
lengths are asymptotically exponential, and the empirical distribution
is asymptotically deterministic with density $f$, then the exponential
rates must be related to $f$ as specified in
\eqref{eq:79gapdistA}. Thus one could formulate the definition less
specifically, only requiring that the gap lengths are asymptotically
exponential with {\em some} rate.

This is probably the simplest generalization of the notion of
chaos to our class of models with the exclusion constraint. We consider four
questions concerning the Kac model on the simplex with exclusion: 

\smallskip
\noindent{\it (1)}  Does the $\{\sigma_{n,\epsilon}\}$ of
equilibrium measures satisfy the detailed  chaos condition when $\alpha_n \to
\alpha$?  If so, what 
is the limiting density $f_\alpha$ for which this sequence is
$(\alpha,f_\alpha)$-chaotic, and
how does $f_\alpha$
 compare with the Fermi-Dirac distribution, which
one might expect  in a ``quantized'' model; i.e., one in which parking
spaces are marked with lines?

\smallskip
\noindent{\it (2)}  For which probability densities $g$ on $\R_+$
that satisfy \eqref{fk8b}  and \eqref{fk8c} do there exist sequences
that satisfy $(\alpha,g)$-chaos and detailed $(\alpha,g)$-chaos conditions?

\smallskip
\noindent{\it (3)} Is detailed chaos propagated, and if so, what is
the  equation that governs the evolution of the limiting marginal
densities? 

\smallskip
\noindent{\it (4)}  At which energy levels in equilibrium do
collisions occur a rate bounded away from zero, and at which energy
levels are the collisions ``frozen out''?  
\medskip

Theorem~\ref{main} gives a positive answer to the first question,
explicitly identifying $f_\alpha$, which is not the Fermi-Dirac
distribution; see Figure 1. Theorem~\ref{main} provides quantitative
bounds on the rate at which 
$W_1(\mu_n,f_\alpha{\rm d}x) \to 0$ in probability, where $W_1$ is the
Kantorovich-Rubinstein transport metric.  Mass transport methods are
the basis of a number of our proofs.  

Theorem~\ref{chacon} answers the second question -- such chaotic
sequences exist for {\em all} densities satisfying the two necessary
requirements \eqref{fk8b} and \eqref{fk8c}. Such sequences can be
constructed in qualitatively different ways, and we provide two
examples of constructions, the second one given in
Theorem~\ref{thm:chaosexp}.   Other results in this 
section provide quantitative chaos estimates, again in the $W_1$
metric for a broad class of densities $g$ satisfying mild regularity
hypotheses.

In section 4  we derive under the assumption of propagation of
detailed chaos, the  Boltzmann-like equation that
governs the evolution of the limiting empirical measure. The equation
resembles the Uehling-Uhlenbeck equation of quantum kinetic theory,
but with a different ``exclusion factor'' corresponding to our different
exclusion model. But this exclusion factor turns out to depend on the
chaotic sequence: Definition~\ref{def1} is not restrictive enough to
uniquely determine the evolution of the limiting empirical measure, though the additional information on the gaps provided in 
Definition~\ref{def2}  is enough.

 We prove that the limiting densities
$f_{\alpha}$ obtained from equilibrium measures
$\{\sigma_{n,\epsilon}\}$ are stationary solutions to the
Boltzmann-Kac equation. We do not prove that propagation of chaos
according to either Definition~\ref{def1} or Definition~\ref{def2} holds, but
we do provide numerical evidence that  detailed
$(\alpha,f)$-chaoticity is 
propagated, and also that if initial data are only
$(\alpha,f_0)$-chaotic, without the correct exponential  
gap distribution \eqref{eq:79gapdistA} for $\alpha$ and $f_0$, this
is actually improved by the evolution: The gap distribution converges
rapidly to the correct exponential distribution,  
so that in this sense it appear that not only is chaos propagated, but
it strengthens.   The numerical evidence for this is presented in
Section~\ref{sec:sim}, and further results are available as
supplementary material.

\section{The empirical distribution with exclusion}
\label{sec:empirical}

Equip the rescaled state space, still denoted $\SEnes$ and
defined in \eqref{state}, with the uniform probability measure
$\sigma_{n,\epsilon}$.  Let $\Ee$ denote
expectation with respect to this probability measure.  Then
$x_1,\dots,x_n$ become random variables.

For two probability measures $\mu$ and $\nu$ on $\R_+$, let
$W_1(\mu,\nu)$ denote the Kantorovich-Rubinstein distance between
$\mu$ and $\nu$.  
Recall that
\begin{equation}\label{KR}
W_1(\nu,\nu) = \sup\left\{ \left|\int_0^\infty  \chi {\rm d}\mu -
    \int_0^\infty  \chi {\rm d}\nu\right|\ :\ \chi \in {\rm Lip}_1
\right\} \,.
\end{equation}
${\rm Lip}_1$ denotes the class of $1$-Lipschitz functions; i.e.,   functions $\chi$ 
such  that  $|\chi(x) - \chi(y)| \leq |x-y|$ for all $x,y$.  Note that
we may restrict to $\chi\in {\rm Lip}_1$ and $\chi(0) =0$ without
changing anything.

\begin{thm}\label{main}
 For $\alpha_n= \epsilon n (n-1)/E_n \rightarrow \alpha$, the sequence
 of uniform probability measures on $\SEnes$  is 
 $(\alpha,f_\alpha)$-chaotic in detail,  where 
  \begin{align}\label{fk94}
  f_\alpha(x) &= \frac{d}{dx} \phi^{-1}(x) = \frac{1}{\phi'\left( \phi^{-1}(x)\right)}\,.
\end{align}
and
\begin{align}
  \label{eq:20.0}
 \phi(\xi)  := (1-\alpha/2) \log\left(\frac{1}{1-\xi}\right)
  + \alpha \xi \ .
\end{align}
Moreover, the sequence of 
 empirical measures  $\{\mu_n\}$, defined as in \eqref{empir},  is such that there is a constant $C$ such that for
              any $\delta>0$ and all sufficiently large $n$,  
  \begin{align}\label{fk94c}            
\Pe\{ W_1(\mu_n,f_\alpha {\rm d}x) > \delta\} \leq
\frac1\delta\left(\frac{C}{\sqrt{n}} +\frac32|\alpha_n-
  \alpha|\right)\ .
\end{align}
\end{thm}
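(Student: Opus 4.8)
The plan is to pass to ordered statistics and gap variables, where the uniform measure becomes an explicit weighted Dirichlet law. Order the energies as $x_{(1)}<\cdots<x_{(n)}$ and set $g_0=x_{(1)}$ and $g_k=x_{(k+1)}-x_{(k)}-\tilde\epsilon_n$ for $1\le k\le n-1$, so that the exclusion constraint is exactly $g_k\ge 0$. Since $x_{(j)}=(j-1)\tilde\epsilon_n+\sum_{k=0}^{j-1}g_k$, the energy constraint $\sum_j x_{(j)}=n$ becomes the single linear relation $\sum_{k=0}^{n-1}c_k g_k=M$ with weights $c_k=n-k$ and $M:=n(1-\alpha_n/2)$. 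This change of variables is affine with constant Jacobian, so the uniform measure on $\SEnes$ pushes forward to the uniform measure on the weighted simplex $\{g_k\ge 0:\sum_k c_k g_k=M\}$; substituting $h_k=c_k g_k$ identifies $(c_k g_k/M)_k$ with $\mathrm{Dirichlet}(1,\dots,1)$, which I realize through i.i.d.\ exponentials $Z_0,\dots,Z_{n-1}$ via $g_k=MZ_k/(c_k\sum_i Z_i)$. All subsequent estimates reduce to moment computations for these explicit variables.

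Second, I identify the limiting density. From the representation $x_{(j)}=(j-1)\tilde\epsilon_n+\frac{M}{\sum_i Z_i}\sum_{k=0}^{j-1}Z_k/c_k$, taking expectations and using $\Ee[Z_k]=1$, $\sum_{k=1}^{j-1}1/(n-k)\approx\log\frac{1}{1-j/n}$, $M\approx n(1-\alpha/2)$ and $(j-1)\tilde\epsilon_n\approx\alpha\,(j/n)$, one gets $\Ee[x_{(j)}]\to\phi(j/n)$ with $\phi$ as in \eqref{eq:20.0}. Thus the empirical quantile function converges to $\phi$, the limiting distribution function is $\phi^{-1}$, and the density is $f_\alpha=(\phi^{-1})'=1/\phi'(\phi^{-1})$, which is \eqref{fk94}. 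A direct check using $\phi'(\xi)=(1-\alpha/2)/(1-\xi)+\alpha$ shows $f_\alpha\le 1/\alpha$ and $\int f_\alpha=1$, consistent with \eqref{fk8b} and \eqref{fk8c}.

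Third, the quantitative estimate \eqref{fk94c}. On the line one has $W_1(\mu_n,f_\alpha\di x)=\int_0^1|Q_{\mu_n}(t)-\phi(t)|\di t\le\frac1n\sum_{j=1}^n\big|x_{(j)}-\phi(j/n)\big|+R_n$, where $Q_{\mu_n}(t)=x_{(\lceil nt\rceil)}$ and $R_n$ is the deterministic discretization error of $\phi$, of size $O(\log n/n)=o(1/\sqrt n)$ after the edge analysis below; so by Markov's inequality it suffices to bound $\Ee[W_1]$. I split $\Ee|x_{(j)}-\phi(j/n)|\le|\Ee[x_{(j)}]-\phi(j/n)|+\mathrm{sd}(x_{(j)})$. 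The bias has two parts: the finite-$n$ corrections to the harmonic-sum and $M$ asymptotics, summable to $O(1/\sqrt n)$, and the replacement of $\alpha_n$ by $\alpha$, controlled by $|\partial_\alpha\phi|$ and producing the term $\tfrac32|\alpha_n-\alpha|$ after integrating $\int_0^1(\tfrac12\log\frac{1}{1-t}+t)\di t$. For the fluctuations I use the exponential representation: $x_{(j)}$ is a smooth function of $\sum_{k<j}Z_k/c_k$ and $\sum_i Z_i$, whose variances are $\sum_{k<j}1/c_k^2$ and $n$; propagating these gives $\mathrm{sd}(x_{(j)})=O(1/\sqrt n)$ away from the top of the spectrum, and even with the $1/\sqrt{n-j}$ growth near $j=n$ the average $\frac1n\sum_j\mathrm{sd}(x_{(j)})$ stays $O(1/\sqrt n)$. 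Collecting terms yields \eqref{fk94c}, and in particular $W_1(\mu_n,f_\alpha\di x)\to 0$ in probability, i.e.\ $(\alpha,f_\alpha)$-chaos.

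Finally, the detailed-chaos statement \eqref{eq:79gapdistA}. By the concentration just proved, the index $j=N(x)$ with $x\in[x_{(j)},x_{(j+1)})$ satisfies $j/n\to\phi^{-1}(x)=:\xi$ in probability, so $c_j/n\to 1-\xi$. The marginal of a single gap is $g_j=Mu_j/c_j$ with $u_j\sim\mathrm{Beta}(1,n-1)$, and since $nu_j\Rightarrow\mathrm{Exp}(1)$ one finds $(n-1)g_j/\alpha\Rightarrow\frac{1-\alpha/2}{\alpha(1-\xi)}\mathrm{Exp}(1)$, an exponential of rate $\frac{\alpha(1-\xi)}{1-\alpha/2}$; the identity $\frac{\alpha(1-\xi)}{1-\alpha/2}=\frac{\alpha f_\alpha(x)}{1-\alpha f_\alpha(x)}$, immediate from the formula for $f_\alpha$, reproduces the rate in \eqref{eq:79gapdistA}. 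I expect the main obstacle to be precisely the passage from this fixed, deterministic index $\lfloor n\xi\rfloor$ to the random interval that actually contains the prescribed point $x$: selecting the gap straddling $x$ a priori introduces a length-biasing (inspection-paradox) effect, since the event $\{x\in[x_{(j)},x_{(j+1)})\}$ weights gaps proportionally to the spacing $g_j+\tilde\epsilon_n$, and one must control the joint law of $(N(x),g_{N(x)})$ under the global energy constraint, together with the $\sqrt n$ fluctuations of $N(x)$ and the weak Dirichlet dependence between $x_{(j)}$ and $g_j$, to confirm that the stated exponential rate is the correct limit. The concentration estimate of the third step is what renders $N(x)$ sufficiently deterministic for this last analysis to close.
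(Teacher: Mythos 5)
Your proposal follows the paper's own route almost step for step. Your gap-variable change of coordinates with weights $c_k=n-k$, and the identification of $(c_k g_k/M)_k$ with a flat Dirichlet law, is exactly the parameterization $T_n$ of Lemma~\ref{param}; your computation $\Ee[x_{(j)}]\to\phi(j/n)$ and the bias-plus-fluctuation bound on the empirical quantile function, averaged and fed into Markov's inequality, is the proof of Theorem~\ref{mainX} specialized to the flat case $w\equiv 1$ (the paper runs the same argument through the random quantile $\psi_n$, its mean $\phi_n$, and the Dirichlet moment formulas \eqref{eq:15}, where you use the i.i.d.-exponential realization of the flat Dirichlet --- a cosmetic difference); and your ${\rm Beta}(1,n-1)$ computation of the gap marginal, with the rate identity $\frac{\alpha(1-\xi)}{1-\alpha/2}=\frac{\alpha f_\alpha(x)}{1-\alpha f_\alpha(x)}$, is verbatim the argument of Section~\ref{subsec:strongchaos} via \eqref{fk47V}. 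One small point in your favor: by controlling $\mathrm{sd}(x_{(j)})\sim (n-j)^{-1/2}$ all the way to the top edge and averaging, you obtain the $C/\sqrt n$ rate in \eqref{fk94c} directly, whereas the paper's Lemma~\ref{detlem} chain cuts at $\xi_*<1$ and leaves the rate extraction to the reader.

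The step you flag as open --- passing from the deterministic index $j=\lfloor n\phi^{-1}(x)\rfloor$ to the random interval that actually straddles the prescribed point $x$ --- is precisely the step the paper performs without comment: its proof of detailed chaoticity simply sets $j=\lfloor n\phi^{-1}(x)\rfloor$ and computes the law of $\tz_j$ for that fixed index. So your caveat is not a deficiency relative to the published argument; you have put your finger on a genuine lacuna in it. Indeed, if Definition~\ref{def2} is read literally, the gap covering a fixed $x$ is selected with probability proportional to its total width $\zeta_j+\alpha/(n-1)$, and for locally i.i.d. exponential rescaled gaps of rate $\mu=\frac{\alpha f_\alpha(x)}{1-\alpha f_\alpha(x)}$ the length-biased tail is $e^{-\mu r}\bigl(1+\tfrac{\mu}{1+\mu}\,r\bigr)=e^{-\mu r}\bigl(1+\alpha f_\alpha(x)\,r\bigr)$, not $e^{-\mu r}$: the inspection paradox does change the limit under the literal reading. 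Either \eqref{eq:79gapdistA} is to be understood for a gap of (nearly) deterministic index near $x$ --- in which case your computation, like the paper's, closes the proof --- or an additional argument quantifying the size-biasing is required, in your write-up and in the paper's alike.
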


  The theorem is a corollary of Theorem~\ref{mainX}, except for the
  statement of detailed chaoticity, which is proven in
  Section~\ref{subsec:strongchaos}. The function $\phi$ in
  equation~(\ref{eq:20.0}) is known as the quantile function for the
  distribution with density $f_{\alpha}$ and it is derived as a limit
  of explicitly constructed quantile functions for each $n$. We refer
  to~\cite{CiprianiZeindler2015} for similar functions related to
  Young diagrams and shape functions associated with random permutations.

Theorem~\ref{main} shows that although the exclusion introduces new
dependencies between the random variables $x_1,\dots,x_n$ that are far
more complicated that those induced by $\sum_{j=1}^n x_j = n $ which
would be the only constraint in the absence of exclusion, these new
dependencies are not an obstacle to chaos in the sense of Kac: If
$\sigma_{n,\epsilon}$ denote the law of $(x_1,\dots,x_n)$, and $\alpha_n\to \alpha$, then
$\{ \sigma_{n,\epsilon}\}$ is $(\alpha,f_\alpha)$-chaotic.  

While the form of $\phi(\xi)$ is simple, it seems difficult to express
the function $f$ in closed form, but it
clearly differs from the Fermi-Dirac density that is the relevant
expression in a quantized setting, although it does resemble it for
large values of $\alpha$. The function $f$ is plotted for some 
different values of $\alpha$ in Figure~\ref{fig:1}.

\begin{figure}[!h]
  \centering
  \includegraphics[width=0.5\textwidth]{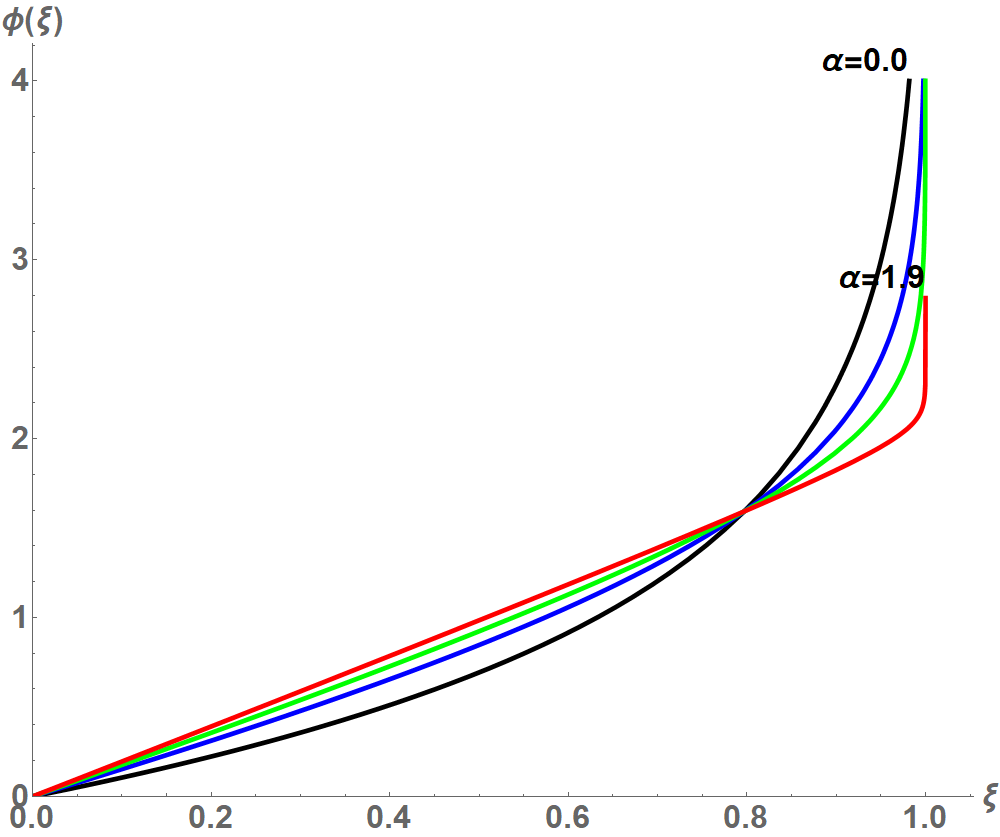}\;%
   \includegraphics[width=0.5\textwidth]{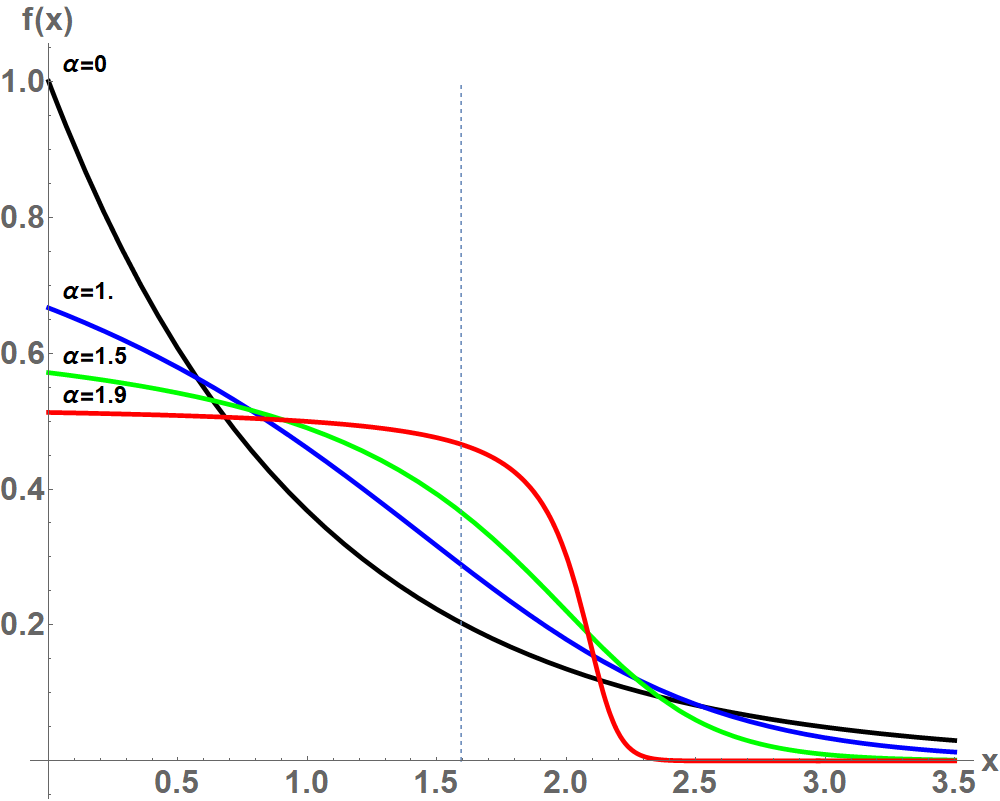}\;%
  \caption{\small The function $\phi(\xi)$ for $\alpha=0.0,1.0,1.5,1.9$
    (left), and the density $f(x)$ for the same values of
    $\alpha$. Curiously, for $\xi=\xi_0\approx 0.797$ (the solution of
    $1-\xi-e^{-2\xi}=0$), $\phi(\xi)$ is independent of $\alpha$, and
    hence the fraction of the mass of $f(x)$ in $0\le x\le 2\xi_0$
    (indicated by the dashed line) is
    $\xi_0$ for all values of $\alpha$}. 
  \label{fig:1}
\end{figure}

\subsection{Parameterization of $\SEnes$ by the standard simplex}

We shall make use of a parameterization of the state space $\SEnes$ in
terms of the standard simplex 
\begin{align}
  \label{eq_01b}
  S_1 := \left\{ (x_1,\dots,x_n)\in \R^n_+ \ :\
  \textstyle{\sum_{j=1}^n x_j }= 1\right\}\ .
\end{align}
We first define $\SEnes^*$ to be the subset consisting of all $(x_1,\dots,x_n)$ with $x_1 < x_2 < \cdots < x_n$. Up to a set of measure zero, one recovers $\SEnes$ by taking the union over all permutations 
$$
\bigcup_{\pi\in\symn} \{ (x_{\pi(1)}, \dots, x_{\pi(n)})\ :\ (x_1,\dots , x_n)\in \SEnes^*\ \}. 
$$
The measures we study are all symmetric under interchange of particles, and hence it suffices to parameterize $\SEnes^*$.

\begin{lm}\label{param} For $(\tz_1,\dots,\tz_n)\in S_1$, define $T_n(\tz_1,\dots,\tz_n)$ to be the vector in $\R_+^n$ whose $j$th component $x_j$ is given by
\begin{equation}\label{Tndef}
  x_j = n \left(1-\frac{\alpha_n}{2}\right) \left(\frac{\tz_1}{n}+
  \frac{\tz_2}{(n-1)}+\cdots+\frac{\tz_j}{n+1-j}\right) +
  \frac{j-1}{n-1}\alpha_n\,.
\end{equation}
Then $T_n$ provides a one-to-one parameterization of $\SEnes^*$ by $S_1$, and moreover $\sigma_{n,\epsilon}$ is the push-forward of the uniform probability measure on $S_1$ under $T_n$, averaged over permutations. 
\end{lm}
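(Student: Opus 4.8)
The plan is to construct the map $T_n$ by introducing the natural "gap coordinates'' on the ordered simplex $\SEnes^*$ and recognizing the resulting region as an affine image of the standard simplex $S_1$. First I would set, for a point $(x_1,\dots,x_n)\in \SEnes^*$ with $x_1<x_2<\cdots<x_n$, the consecutive gaps
\begin{equation*}
  g_1 = x_1, \qquad g_j = x_j - x_{j-1} \quad (2\le j\le n).
\end{equation*}
The constraints defining $\SEnes^*$ translate cleanly: positivity $x_1>0$ and strict ordering become $g_1\ge 0$ and $g_j\ge \tilde\epsilon_n = \alpha_n/(n-1)$ for $j\ge 2$ (using the exclusion rule $x_j-x_{j-1}>\epsilon$ in the rescaled variables), while the energy constraint $\sum_{j=1}^n x_j = n$ becomes a single linear relation among the $g_j$. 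The idea is to subtract off the minimal energy by writing $g_j = \alpha_n/(n-1) + h_j$ for $j\ge2$ so that the $h_j\ge0$ together with $g_1=h_1\ge 0$ live in a translated simplex, then rescale by the excess energy factor $(1-\alpha_n/2)$ to land on $S_1$.

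Next I would carry out the linear algebra explicitly. Writing $x_j = \sum_{k=1}^j g_k$ and summing, the energy constraint reads $\sum_{j=1}^n x_j = \sum_{k=1}^n (n+1-k)\,g_k = n$. Substituting the minimal-gap shift contributes a fixed amount $\sum_{k=2}^n (n+1-k)\,\alpha_n/(n-1)$, which after simplification equals $n\alpha_n/2$; this is precisely why the excess energy $n(1-\alpha_n/2)$ appears as the total "free'' energy. Introducing normalized free variables $\tz_k$ proportional to $(n+1-k)h_k$, one checks that $\sum_k \tz_k = 1$, i.e.\ $(\tz_1,\dots,\tz_n)\in S_1$, and inverting the relation recovers exactly the formula \eqref{Tndef}: the weight $1/(n+1-k)$ on $\tz_k/$ in the sum for $x_j$ is the reciprocal of the coefficient $(n+1-k)$ appearing in the energy sum, and the additive term $(j-1)\alpha_n/(n-1)$ collects the minimal-gap contributions to $x_j$. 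Bijectivity of $T_n$ then follows because it is an invertible affine map from $S_1$ onto the (open, full-dimensional) region of gap-coordinates describing $\SEnes^*$.

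Finally, for the measure statement, I would note that $T_n$ is affine, hence has constant Jacobian determinant, so it pushes the uniform (Lebesgue) probability measure on $S_1$ to the uniform probability measure on $\SEnes^*$, which is $\sigma_{n,\epsilon}$ restricted to the ordered sector. Averaging over the $n!$ permutations in $\symn$ then recovers the full symmetric measure $\sigma_{n,\epsilon}$ on $\SEnes$, up to the measure-zero set where coordinates coincide. The main obstacle I anticipate is purely bookkeeping: verifying that the shift by the minimal gaps accounts for exactly the additive term $(j-1)\alpha_n/(n-1)$ and that the leftover "excess'' energy is $n(1-\alpha_n/2)$, so that the coefficients in \eqref{Tndef} come out precisely right; the conceptual content (affine change of variables sending one simplex to another, constant Jacobian) is straightforward once the gap coordinates are set up correctly.
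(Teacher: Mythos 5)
Your proposal is correct and follows essentially the same route as the paper's own proof: the paper likewise passes to the gap variables $z_1=x_1$, $z_j=x_j-x_{j-1}-\epsilon_n$ (your $h_j$), identifies the excess energy $n\left(1-\frac{\alpha_n}{2}\right)$, rescales via $\tz_j=\frac{n+1-j}{n}\frac{1}{1-\alpha_n/2}\,z_j$ to land on the standard simplex, and uses constancy of the Jacobians (unit Jacobian for the gap substitution, with the constant $\mathcal{J}_{n,\epsilon}$ cancelling against the normalization $\pfneps$) to obtain the push-forward statement, averaged over permutations. The only difference is presentational: you invoke the affine-map/constant-Jacobian principle abstractly, whereas the paper derives the simplex domain constraints $\tz_j\leq 1-\sum_{k<j}\tz_k$ and the change of variables explicitly.
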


\begin{proof} First note that
$$\sum_{j=1}^n \left(\frac{\tz_1}{n}+  
  \frac{\tz_2}{(n-1)}+\cdots+\frac{\tz_j}{n+1-j}\right)= 1\quad{\rm and}\quad \sum_{j=1}^n \frac{j-1}{n-1}\alpha_n =\frac{n\alpha_n}{2}\ ,
  $$
  so that $\sum_{j=1}^n x_j= n$, and for $j>1$,
\begin{equation}\label{fk10}
  x_{j} - x_{j-1} = n
  \left(1-\frac{\alpha_n}{2}\right)\frac{\tz_j}{n+1-j}  +\epsilon_n
  \geq \epsilon_n\ . 
 \end{equation}
  Thus the image of $T_n$ lies in $\SEnes$.  Moreover, \eqref{fk10}
  shows that $T_n$ is invertible, and gives an explicit formula for
  the inverse from which one sees, by the same computations that
  $T_n^{-1}(\SEnes) \subset S_1$. This proves the statements about the
  parameterization. The proof of the description of
  $\sigma_{n,\epsilon}$ in terms of $T_n$ is somewhat more involved.  

We begin by considering the case with no exclusion ($\epsilon =0$): 
 The uniform
density is also  the equilibrium distribution of a set of particles at
equilibrium, so that for $\phi\in C(\R^n)$,
\begin{align}
\label{eq:1}
   \Ee( \phi(x_1,...,x_n))  &=
   \frac{1}{\mathcal{Z}} \int\limits_{0< x_1+...+x_{n-1}< n}
                              \phi(x_1,...,x_n)\; \di x_1 \di x_2
                              \cdots \di x_{n-1}\nonumber \\
   \nonumber \\
   &=\frac{1}{\tilde{\mathcal{Z}}} \int\limits_{\substack{0<
     x_1+...+x_{n-1}< n\\x_1<\cdots < x_n}} \sum_{\pi}
  \phi_{\pi}(x_1,...,x_n)\; \di x_1 \di x_2 \cdots \di x_{n-1}\,,
\end{align}
where $x_n=n-x_1-\cdots-x_{n-1}$, and, in the second row, $\phi_\pi$
denotes the composition of $\phi$ with the permutation operator $\pi:
(x_1,...,x_n) \mapsto (x_{\pi_1},x_{\pi_2}, ..., x_{\pi_n})$, and the
sum is taken over all permutations. The normalizing factor
$\tilde{\mathcal{Z}}$ is given by
\begin{align}\,.
 \int\limits_{\substack{0 <x_1+...+x_{n-1}< n\\0<x_1<\cdots < x_n}}
  \; \di x_1 \di x_2 \cdots \di x_{n-1}\,.
\end{align}
Here we have parameterized $\SEns$ with its projection on $\{
(x_1,...,x_{n-1}) \, | \,  x_j>0\,,\, x_1+\cdots+x_{n-1} < n\}$, and
set $\di \sigma(x_1,...,x_{n-1}) = \di x_1\cdots\di x_{n-1}$ without
the factor $\sqrt{n}$ which may anyway be absorbed into
$\tilde{\mathcal{Z}}$.

Now consider the case $\epsilon>0$: The
expectation in eq.~(\ref{eq:1}) can then be computed with the same
integrals, but adding the restriction that $x_{j}-x_{j-1}>\epsilon_n$
for all $j>1$. Therefore we set $z_{j} =  x_{j}-x_{j-1}-\epsilon_n>0$
for $1 < j < n$ and set $z_1 = x_1$. 
This yields the following change of variables:
\begin{equation}
\label{eq:2}
\begin{split}
x_1&= z_1\\
x_2&= x_1 + \epsilon_n + z_2 = z_1+z_2+\epsilon_n\\
\cdots \\
x_{n-1} &= z_1+\cdots +z_{n-1} + (n-2)\epsilon_n \\
x_n & = n-(n-1)z_1-(n-2)z_2-\cdots - z_{n-1} -  \epsilon_n (n-1)(n-2)/2.
\end{split}
\end{equation}
The Jacobian of $(z_1,...,x_{n-1}) \mapsto (x_1,...,x_{n-1})$ has
determinant one, and hence to compute the integrals in
eq.~(\ref{eq:1}), it is enough to find the domain of
$(z_1,...,x_{n-1})$.  For each $j < k \leq n$, and each $1 \leq m \leq
n-j$, we have 
$x_{j+m} \geq x_j + m\epsilon_n$, and evidently this is the smallest
value $x_{j+m}$ can take, given $x_j$.  
Therefore,
$$n - \sum_{k=1}^j x_k= \sum_{m=1}^{n-j}x_{j+m}  \geq (n-j)x_j + \frac{(n+1-j)(n-j)}{2}\epsilon_n\ .$$
Rearranging terms,
\begin{align}
       x_j < \frac{1}{n+1-j}\left( n - x_1-x_2-\cdots -x_{j-1} -
  \epsilon_n \frac{(n+1-j)(n-j)}{2} \right)\,.
\end{align}
Since $x_k = z_1 + \cdots + z_k + (k-1)\epsilon_n$,
$$\sum_{k=1}^{j-1} x_k= \sum_{k=1}^{j-1}(j-k)z_k  +\epsilon_n \frac{(j-1)(j-2)}{2}\ ,
$$
and since
$$
(j-1)(j-2)+ (n+1-j)(n-1) + 2(j-1)(n+1-j) = n(n-1)\ ,
$$
\begin{align}\label{fk61}
z_j &< \frac{1}{n+1-j}\left(n - \sum_{k=1}^{j-1}(j-k)z_k
      -\epsilon_n \frac{(j-1)(j-2)}{2}-\epsilon_n
      \frac{(n+1-j)(n-j)}{2}\right) \nonumber \nonumber \\
    &\quad - z_1-\cdots-z_{j-1} - (j-1)\epsilon_n \nonumber\\
    &=\frac{1}{n+1-j}\left(n \left(1 -\frac{\alpha_n}{2}\right)-
      \sum_{k=1}^{j-1}  (n+1-k)z _k   \right)\,. 
\end{align}
Define 
\begin{equation}\label{fk62}
\tz_j := \frac{n+1-j}{n}\frac{1}{1- \alpha_n/2}z_j\ .
\end{equation}
Then \eqref{fk61} becomes
$\displaystyle{\tz_j  \leq  1 - \sum_{k=1}^{j-1}\tz_k}$.
Using this notation,  the  version of Equation~(\ref{eq:1}) with exclusion  can
be written
\begin{align}
\label{eq:8}
  \Ee( \phi(x_1,...,x_n))  = & \frac{\mathcal{J}_{n,\epsilon}}{\pfneps}
   \int\limits_{0}^{1} \mathrm{d}\tz_1
                               \int\limits_{0}^{1 - \tz_1} \mathrm{d}\tz_2 \cdots
   \int\limits_{0}^{1 -\tz_1 -\cdots -\tz_{j-1}}\mathrm{d}\tz_j\cdots \nonumber \\
    & \cdots \int\limits_{0}^{1 -\tz_1 -\cdots -\tz_{n-2}}\mathrm{d}\tz_{n-1}\qquad \sum_{\pi}
      \phi_{\pi}(x_1,x_2,...,x_n)\, ,
\end{align}
where $\mathcal{J}_{n,\epsilon}$ is the Jacobian corresponding to the
change of variables given in \eqref{fk62}.  Taking $\varphi$ to be the
constant function $1$, it is evident that  
$$\pfneps =\mathcal{J}_{n,\epsilon}  =\frac{1}{n!}\left(n - \frac{n\alpha_n}{2}\right)^{n-1}\ ,$$
which gives the value of $\pfneps$. However, we only need to know that
$\mathcal{J}_{n,\epsilon}/\pfneps =1$, and then observe that the
substitution \eqref{fk62} transforms  
$x_{j} = z_1+\cdots +z_{n-1} + (j-1)\epsilon_n$ into \eqref{Tndef} for all $j < n$. 
\end{proof}

\begin{remark}
  \label{sampling}
Lemma~\ref{param} provides a convenient method for sampling
$(x_1,...,x_n)$:
simply take $ (\tz_1,...,\tz_n) $ uniformly from the standard
$n$-simplex, i.e. $\tz_1+\cdots+\tz_n=1$, and compute the $x_j$
according to the formula \eqref{Tndef}.
\end{remark}

\begin{remark}\label{partition} Because $T_n$ is  continuous and
  invertible with a continuous inverse, it sets up a one-to one
  correspondence between symmetric Borel probability measures on
  $\SEnes$ and  Borel  probability measures on $S_1$. This
  correspondence provides a useful way to think about  symmetric Borel
  probability measures on $\SEnes$ in terms of partitions of the excess
  energy. In the rescaled variables, the excess energy is 
$$n\left(1 - \frac{\alpha_n}{2}\right) =   \sum_{j=1}^n n\left(1 - \frac{\alpha_n}{2}\right)\tz_j\ .$$
Thus one may think of $\{\tz_1,\dots,\tz_n\}$ as specifying a
partition of the excess energy into $n$ components 
\begin{align}\label{ranpar}
 \left\{ n \left(1-\frac{\alpha_n}{2}\right)\tilde{z}_j\right\}_{j=1}^n\ .
\end{align}
The first term in the partition may be understood as making an equal contribution of  
$$
n \left(1-\frac{\alpha_n}{2}\right)\frac{\tilde{z}_1}{n}
$$
to the energy of each particle,  and in the same way the second
component  makes equal contributions of  
$$
n \left(1-\frac{\alpha_n}{2}\right)\frac{\tilde{z}_2}{n-1}
$$
to the energy of each of the last $n-1$ particle and so forth. Adding
these up, together with the total  excluded energies, one arrives at
\eqref{Tndef}.
\end{remark}

\section{Pre-chaotic sequences of probability measures on $S_1$}  
\label{sec:prechaos}

We now identify a class of sequences of measures on $S_1$ whose push-forwards under $T_n$ will be shown to be $(\alpha,g)$-chaotic on  $\SEnes$  in the sense of Definition~\ref{def1}.

For each $n$, let $\tau_n$ be a Borel probability measure on $S_1$.  Also define the function 
\begin{equation}\label{wdef}
w_n(\xi) = n \Eetn[\tilde z_k]  \quad{\rm for} \quad \frac{k-1}{n} < \xi \leq \frac{k}{n}\ . \qquad 1 \leq k \leq n\ ,
\end{equation}
and $w(0) =n\Eetn[\tilde z_1]$. Here $\Eetn$ is the expectation with respect to the measures $\tau_n$. 
That is, for $\xi>0$,
$w_n(\xi) = n\Ee[\tilde z_{\lceil n\xi\rceil}]$
where $\lceil n \xi \rceil$ is the least integer $k$ such that
$n\xi \leq k$.  Note that for each $n$,
$$\int_0^1 w_n(\xi){\rm d}\xi = \sum_{k=1}^n \Ee_{\tau_n}[\tilde z_k] = 1\ ,$$
so that $w_n$ is a probability density.

\begin{defi}\label{s1chaos}  Let $w:[0,1[\to \R_+$ be a continuous probability density. A sequence $\{\tau_n\}$ of probability measures on $S_1$ is $w$-{\em pre-chaotic} in case 
\begin{equation}\label{wa5}
n\Eetn[\tilde z_j]  =   w(j/n) + r(j/n)
\end{equation}
{ for a continuous function $r(s)$ decreasing to zero when
  $s\rightarrow 0$},  where for each $0 < \xi_*< 1$, and each
  $\epsilon>0$, there is an $n_\epsilon$ so that  
\begin{equation}\label{wb5}
| r(j/n)| < \epsilon \qquad{\rm for\ all}\quad n> n_\epsilon, \ j<  n\xi_* \ . 
\end{equation}
and moreover, for some constant $C<\infty$ depending only on $\xi_*$,
\begin{equation}\label{w4}
 \mathrm{Var}[\tz_i] \leq \frac{C}{n}\epsilon \qquad{\rm and}\qquad| {\rm Cov}(\tilde z_j,\tilde z_k)| \leq \frac{C}{n^2}\epsilon   \qquad{\rm for\ all}\quad n> n_\epsilon, \ j,k<  n\xi_* \ . 
\end{equation}
for all $n$, $j$ and $k$.
\end{defi}

\begin{remark}\label{equil}
 By Lemma~\ref{param}, the equilibrium distribution $\sigma_{n,\epsilon}$
arises when the random partition in \eqref{ranpar} is determined by
choosing $(\tz_1,\dots,\tz_n)$ from a flat Dirichlet distribution;
i.e., the  
uniform density on $S_1$, and then the random variables $\tz_j$
satisfy
\begin{align}
\label{eq:15}
\Ee[\tz_i]&= \frac{1}{n}\,, \nonumber\\
\mathrm{Var}[\tz_i] &=  \frac{(n-1)}{n^2(n+1)}\,,  \\
\mathrm{Cov}[\tz_i,\tz_j] &=  \frac{-1}{n^2 (n+1)}\,.  \nonumber
\end{align}
Moreover, it is clear that for each $n$, $w_n(\xi) =1$ for all $n$ and $\xi$.  In this case, $w$ is continuous on the closed interval $[0,1]$ and hence is bounded at $1$ also, though the definition allows for $w(t)$ to diverge as $t\uparrow 1$.  Later, we shall see that we need this generality.
\end{remark}

The next lemma will be used several times in what follows.

\begin{lm}\label{cut}  Let $f$ and $g$ be two non-negative integrable functions on $[0,1]$ such that
$$
\left|\int_0^1 (f(\xi) - g(\xi)){\rm d}\xi \right| \leq a
$$
 Then for all $0 < \xi_* < 1$, 
$$
\int_0^1 | f(\xi) - g(\xi)|{\rm d}x \leq  2\int_0^{\xi_*} | f(\xi) - g(\xi)|{\rm d}x  + 2\int_{\xi_*}^1 g(\xi){\rm d}\xi  + a\ .
$$
\end{lm}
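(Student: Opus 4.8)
The plan is to split $\int_0^1|f-g|$ at $\xi_*$, leave the head integral $\int_0^{\xi_*}|f-g|$ untouched since it already appears on the right-hand side, and estimate only the tail $\int_{\xi_*}^1|f-g|$. On the tail I would first discard the cancellation in the absolute value: since $f,g\ge 0$, one has $|f-g|\le f+g$ pointwise, hence
\begin{equation*}
\int_{\xi_*}^1|f-g|\,{\rm d}\xi \ \le\ \int_{\xi_*}^1 f\,{\rm d}\xi + \int_{\xi_*}^1 g\,{\rm d}\xi\,.
\end{equation*}
One copy of the tail mass of $g$ is kept as is; the remaining task is to trade the tail mass of $f$ for the tail mass of $g$ plus controllable errors, which is what produces the coefficient $2$ in front of $\int_{\xi_*}^1 g$ in the statement.

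The key step is to bound $\int_{\xi_*}^1 f$ using the near-equality of total masses. Writing $\int_{\xi_*}^1 f = \int_0^1 f - \int_0^{\xi_*} f$, the hypothesis $\left|\int_0^1(f-g)\,{\rm d}\xi\right|\le a$ gives $\int_0^1 f \le \int_0^1 g + a$, while for the head I would bound $\int_0^{\xi_*} f$ from below by $\int_0^{\xi_*} g - \int_0^{\xi_*}|f-g|$. Subtracting these yields
\begin{equation*}
\int_{\xi_*}^1 f\,{\rm d}\xi \ \le\ \int_{\xi_*}^1 g\,{\rm d}\xi + a + \int_0^{\xi_*}|f-g|\,{\rm d}\xi\,.
\end{equation*}
Inserting this into the previous display gives $\int_{\xi_*}^1|f-g| \le 2\int_{\xi_*}^1 g + a + \int_0^{\xi_*}|f-g|$, and adding $\int_0^{\xi_*}|f-g|$ to both sides produces exactly the claimed inequality.

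There is no serious obstacle here: the estimate is essentially a one-line consequence of the triangle inequality together with the total-mass constraint. The only point that deserves care is the direction of the inequality when the head mass of $f$ is replaced by the head mass of $g$, where the head error $\int_0^{\xi_*}|f-g|$ must enter with the correct sign; it is precisely this term that, once moved to the left and recombined, accounts for the factor $2$ multiplying $\int_0^{\xi_*}|f-g|$ in the conclusion. No regularity or continuity of $f$ and $g$ beyond integrability is needed.
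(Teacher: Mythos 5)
Your proof is correct and follows essentially the same route as the paper: split at $\xi_*$, bound the tail by $\int_{\xi_*}^1 f + \int_{\xi_*}^1 g$, and then trade $\int_{\xi_*}^1 f$ for $\int_{\xi_*}^1 g + a + \int_0^{\xi_*}|f-g|$ using the near-equality of total masses. The only difference is cosmetic bookkeeping (you decompose $\int_{\xi_*}^1 f = \int_0^1 f - \int_0^{\xi_*} f$, while the paper writes $\int_{\xi_*}^1 f = \int_{\xi_*}^1 g + \int_{\xi_*}^1(f-g)$ directly), so there is nothing substantive to add.
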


\begin{proof} We have
\begin{eqnarray*}
\int_0^1 | f(\xi) - g(\xi)|{\rm d}x  &=&  \int_0^{\xi_*} | f(\xi) - g(\xi)|{\rm d}x + \int_{\xi_*}^1 | f(\xi) - g(\xi)|{\rm d}x\\
 &\leq&  
 \int_0^{\xi_*} | f(\xi) - g(\xi)|{\rm d}x + \int_{\xi_*}^1 f(\xi){\rm d}\xi + \int_{\xi_*}^1 g(\xi){\rm d}\xi
\end{eqnarray*}
Next,
\begin{eqnarray*}
\int_{\xi_*}^1 f(\xi){\rm d}\xi  &=& \int_{\xi_*}^1 g(\xi){\rm d}\xi +  \int_{\xi_*}^1( f(\xi)- g(\xi){\rm d}\xi \\
 &\leq&  \int_{\xi_*}^1 g(\xi){\rm d}\xi  + a -  \int_0^{\xi_*}( f(\xi)- g(\xi)){\rm d}\xi\\ 
 &\leq&  \int_{\xi_*}^1 g(\xi){\rm d}\xi +  a + \int_0^{\xi_*}| f(\xi)- g(\xi)|{\rm d}\xi\,.
\end{eqnarray*}
\end{proof}

Our first application is the following:

\begin{lm} \label{L1lm}  Let $\{\tau_n\}$ be a $w$ pre-chaotic
  sequence, and let $w_n$ be defined in terms of $\tau_n$ as in
  \eqref{wdef}. Then  
\begin{equation}\label{w5}
\lim_{n\to\infty}\int_0^1 |w_n(\xi) - w(\xi)|{\rm d}\xi = 0\ .
\end{equation} 
\end{lm}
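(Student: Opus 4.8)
The plan is to reduce the global $L^1$ estimate to a comparison on an interval $[0,\xi_*]$ bounded away from the endpoint $1$, where both the continuity of $w$ and the pre-chaotic control of the remainder $r$ are available, and to absorb the contribution near $1$ into a tail of $w$. First I would observe that both $w_n$ and $w$ are probability densities on $[0,1]$: indeed $\int_0^1 w_n(\xi)\,{\rm d}\xi = \sum_{k=1}^n \Eetn[\tilde z_k] = 1$ since $\sum_{k}\tilde z_k = 1$ on $S_1$, while $w$ is a probability density by hypothesis. Hence $\int_0^1 (w_n - w)\,{\rm d}\xi = 0$, so Lemma~\ref{cut} applies with $f = w_n$, $g = w$ and $a = 0$, giving for every $0 < \xi_* < 1$
\[
\int_0^1 |w_n(\xi) - w(\xi)|\,{\rm d}\xi \leq 2\int_0^{\xi_*} |w_n(\xi) - w(\xi)|\,{\rm d}\xi + 2\int_{\xi_*}^1 w(\xi)\,{\rm d}\xi\,.
\]

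Given $\eta > 0$, I would first use the integrability of $w$ to fix $\xi_* \in (0,1)$ with $2\int_{\xi_*}^1 w\,{\rm d}\xi < \eta/2$, and introduce an auxiliary interior cutoff $\xi_{**} = (\xi_*+1)/2 < 1$. It then remains to drive the first term to $0$. For $\xi \in\,](k-1)/n, k/n]$ one has $w_n(\xi) = n\Eetn[\tilde z_k] = w(k/n) + r(k/n)$ by \eqref{wa5}, and $|k/n - \xi| \le 1/n$, so
\[
|w_n(\xi) - w(\xi)| \leq |w(k/n) - w(\xi)| + |r(k/n)|\,.
\]
Two facts then control the integral over $[0,\xi_*]$. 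Since $w$ is continuous on $[0,1[$, it is uniformly continuous on the compact interval $[0,\xi_{**}]$; for $n$ large every index $k$ arising for $\xi \le \xi_*$ satisfies $k/n \le \xi_* + 1/n \le \xi_{**}$, so the first term is bounded by the modulus of continuity of $w$ at scale $1/n$, which tends to $0$. For the second term, the pre-chaotic hypothesis \eqref{wb5}, applied with the cutoff $\xi_{**}$, guarantees that for any $\epsilon > 0$ and all $n$ large the relevant indices $k < n\xi_{**}$ give $|r(k/n)| < \epsilon$. Integrating both bounds over $[0,\xi_*]$ shows $2\int_0^{\xi_*}|w_n - w|\,{\rm d}\xi \to 0$, hence is $< \eta/2$ for $n$ large; combined with the tail bound this yields $\int_0^1 |w_n - w| < \eta$ for all large $n$, proving the claim.

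The routine parts are the two convergences on $[0,\xi_*]$; the single point requiring care — and the reason Lemma~\ref{cut} is invoked rather than a direct uniform-continuity argument on all of $[0,1]$ — is the behaviour near the right endpoint. There $w$ may diverge (as noted in Remark~\ref{equil}) and the remainder $r(j/n)$ is only controlled for $j$ bounded away from $n$, so neither term can be estimated at $\xi = 1$. The mass-splitting provided by Lemma~\ref{cut}, together with the choice of an interior cutoff $\xi_{**}$ slightly larger than $\xi_*$ so that every discretisation argument $k/n$ stays in the region where $w$ is uniformly continuous and $r$ is uniformly small, is exactly what makes these endpoint difficulties disappear. The variance and covariance bounds of Definition~\ref{s1chaos} play no role here and will only be needed for the concentration of $\mu_n$ itself.
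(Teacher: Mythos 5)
Your proof is correct and follows essentially the same route as the paper's: both reduce to $[0,\xi_*]$ via Lemma~\ref{cut} (using that $w_n$ and $w$ are probability densities, so $a=0$), bound the interior term by the modulus of continuity of $w$ plus the uniform smallness of $r$ from \eqref{wb5}, and absorb the tail $\int_{\xi_*}^1 w$ by integrability. Your auxiliary cutoff $\xi_{**}$ is merely a slightly more careful handling of the discretisation points $k/n$ near $\xi_*$, not a different argument.
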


\begin{proof}
Pick $\epsilon>0$, and choose $0 <\xi_* < 1$ such that $\int_{\xi_*}^1
w(t){\rm d}t  < \epsilon$.   By hypothesis   $w$ is continuous
on $[0,\xi_*]$, and for all $n > n_\epsilon$, 
\begin{eqnarray*}
\int_0^{\xi_*} |w_n(\xi) - w(\xi)|{\rm d}\xi  &=& \sum_{k <
    n\xi_*}\int_{(k-1)/n}^{k/n}|w(k/n) + r(k,n) -   w(\xi)|{\rm d}\xi\\ 
&\leq& \sum_{k < n\xi_*}\int_{(k-1)/n}^{k/n}|w(k/n)  - w(\xi)|{\rm
       d}\xi  +\epsilon\\
\end{eqnarray*}
If $\omega$ denotes the modulus of continuity of $w$ on $[0,\xi_*]$, 
$$|w(k/n)  - w(\xi)| \leq \omega(1/n) \quad{\rm on} \quad \left[\frac{k-1}{n}, \frac{k}{n}\right]\ .$$
Thus  
$$
\int_0^{\xi_*} |w_n(\xi) - w(\xi)|{\rm d}\xi    \leq \omega(1/n) +\epsilon $$
 for all sufficiently large $n$.  By Lemma~\ref{cut}, for all sufficiently large $n$,
 $$
 \int_0^1 |w_n(\xi) - w(\xi)|{\rm d}\xi   \leq  2\omega(1/n) +4\epsilon  \ .
 $$
 Since $\epsilon>0$ is arbitrary, the lemma is proved.
\end{proof}

\subsection{Chaotic sequences of probability measures on $\SEnes$}

In this section we prove the following:

\begin{thm}\label{mainX}  Let $w$ be a  probability density on $[0,1]$
  that is continuous on $[0,1[$, and let $\{\tau_n\}$  be a $w$
  pre-chaotic sequence of probability densities on $S_1$.  Fix a
  sequence of energies $\{E_n\}$ with $\alpha_n= \epsilon n (n-1)/E_n
  \rightarrow \alpha$, 
and define the maps $T_n$ in terms of $\alpha_n$.  Let $\hat\tau_n$
denote the push forward of $\tau_n$ onto $\SEnes$, averaged over
permutations. Let $\{\mu_n\}$ be the sequence of empirical measures
associated to  
$\{\hat\tau_n\}$. Then 
\begin{equation}
  \label{fk95}
\lim_{n\to\infty}W_1(\mu_n,g(x){\rm d}x) = 0,
\end{equation}
where $g$ is a probability density on $\R_+$ related to $w$ as
follows: Define the increasing function $\phi$ on $[0,1]$ by 
\begin{equation}
  \label{fk46}
\phi(\xi) = (1-\alpha/2)\int_0^\xi \frac{w(t)}{1-t}{\rm d}t  + \alpha \xi 
\end{equation}
and then 
\begin{equation}\label{mainX2}
g(x) = \frac{1}{\phi'(\phi^{-1}(x))}\ .
\end{equation}
\end{thm}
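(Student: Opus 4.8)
The plan is to reduce everything to the one–dimensional quantile (inverse distribution function) representation of $W_1$ and then to peel off a tail that is handled separately by Lemma~\ref{cut}. First I would record that $\phi$ is exactly the quantile function of $g\,{\rm d}x$: since $\alpha\in\,]0,2[$ we have $\phi'(\xi)=(1-\alpha/2)w(\xi)/(1-\xi)+\alpha\ge\alpha>0$, so $\phi$ is a continuous strictly increasing bijection of $[0,1[$ onto its range, and the change of variables $x=\phi(\xi)$ together with \eqref{mainX2} shows that the distribution function of $g\,{\rm d}x$ is $\phi^{-1}$. On the other side, by Remark~\ref{sampling} I sample $(\tz_1,\dots,\tz_n)$ from $\tau_n$ and obtain the energies $x_1<\cdots<x_n$ from \eqref{Tndef}, already ordered by \eqref{fk10}; the empirical quantile function is then $q_n(u):=x_j$ for $u\in\,](j-1)/n,j/n]$, and $\int_0^1 q_n(u)\,{\rm d}u=\tfrac1n\sum_{j=1}^n x_j=1$ by \eqref{fk6}. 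Using the standard fact that in one dimension the Kantorovich--Rubinstein distance equals the $L^1$ distance of quantile functions, this yields
\[
W_1(\mu_n,g\,{\rm d}x)=\int_0^1\bigl|q_n(u)-\phi(u)\bigr|\,{\rm d}u .
\]

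Next, since $q_n$ and $\phi$ are nonnegative and both integrate to $1$ over $[0,1]$, I would apply Lemma~\ref{cut} with $f=q_n$, $g=\phi$ and $a=0$ to obtain, for every $0<\xi_*<1$,
\[
W_1(\mu_n,g\,{\rm d}x)\le 2\int_0^{\xi_*}\bigl|q_n(u)-\phi(u)\bigr|\,{\rm d}u+2\int_{\xi_*}^1\phi(u)\,{\rm d}u .
\]
A Fubini computation gives $\int_0^1\phi=(1-\alpha/2)\int_0^1 w+\alpha/2=(1-\alpha/2)+\alpha/2=1$, so $\phi\in L^1([0,1])$ even when $\int_0^1 w(t)/(1-t)\,{\rm d}t=\infty$, and the deterministic tail $2\int_{\xi_*}^1\phi$ tends to $0$ as $\xi_*\uparrow1$. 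The whole problem is thereby reduced to estimating the truncated integral in expectation over $\tau_n$, on a region $u\le\xi_*$ where $\phi$ is bounded and $C^1$ and where the pre-chaotic bounds \eqref{wb5}--\eqref{w4} are available.

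For the truncated integral I write $q_n(u)=x_j$ on $](j-1)/n,j/n]$ and split
\[
x_j-\phi(u)=\bigl(x_j-\Eetn[x_j]\bigr)+\bigl(\Eetn[x_j]-\phi(j/n)\bigr)+\bigl(\phi(j/n)-\phi(u)\bigr),
\]
controlling the three pieces over $j\le n\xi_*$. The discretization piece is at most $\omega_\phi(1/n)$, the modulus of continuity of $\phi$ on $[0,\xi_*]$, and vanishes as $n\to\infty$. For the bias piece I insert $n\Eetn[\tz_k]=w(k/n)+r(k/n)$ into \eqref{Tndef}, recognize $(1-\alpha_n/2)\sum_{k=1}^j w(k/n)/(n+1-k)$ as a Riemann sum converging uniformly on $j\le n\xi_*$ to $(1-\alpha/2)\int_0^{j/n} w(t)/(1-t)\,{\rm d}t$, bound the $r$-contribution by $\tfrac{\xi_*}{1-\xi_*}\epsilon$ using \eqref{wb5}, and bound the error from $\alpha_n\to\alpha$ by $O(|\alpha_n-\alpha|)$; hence $\sup_{j\le n\xi_*}|\Eetn[x_j]-\phi(j/n)|\to 0$. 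For the fluctuation piece I use $\Eetn|x_j-\Eetn x_j|\le\mathrm{Var}[x_j]^{1/2}$ and, from \eqref{Tndef},
\[
\mathrm{Var}[x_j]=n^2(1-\alpha_n/2)^2\sum_{k,l=1}^j\frac{\mathrm{Cov}(\tz_k,\tz_l)}{(n+1-k)(n+1-l)},
\]
which, using $n+1-k\ge n(1-\xi_*)$ for $k\le n\xi_*$ and the bounds \eqref{w4}, is $O(\epsilon)$ uniformly in $j\le n\xi_*$; therefore $\tfrac1n\sum_{j\le n\xi_*}\mathrm{Var}[x_j]^{1/2}=O(\sqrt\epsilon)$.

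Combining, for each fixed $\xi_*$ and $\epsilon$ I obtain $\limsup_n \Eetn[W_1(\mu_n,g\,{\rm d}x)]\le C(\xi_*)\sqrt\epsilon+2\int_{\xi_*}^1\phi$; since the left side is independent of $\epsilon$, letting $\epsilon\downarrow0$ and then $\xi_*\uparrow1$ gives $\Eetn[W_1]\to0$, and convergence in probability follows by Markov's inequality. The main obstacle is the behaviour near $u=1$: there $\phi$ may be unbounded and the pre-chaotic variance, covariance and remainder bounds are unavailable, so no direct estimate of the largest order statistics is possible. The device that removes this difficulty is the exact identity $\int_0^1 q_n=1=\int_0^1\phi$, which lets Lemma~\ref{cut} trade the uncontrolled tail of $q_n$ for the harmless deterministic tail $\int_{\xi_*}^1\phi$; everything else is the routine bias--variance analysis on the truncated region.
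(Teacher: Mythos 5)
Your argument is correct, and while it rests on the same three ingredients as the paper's proof --- the encoding of $\mu_n$ as a step function built from \eqref{Tndef}, the truncation Lemma~\ref{cut}, and the bias--variance control coming from \eqref{wa5}--\eqref{w4} --- it is organized along a genuinely different route. The paper proceeds by a triangle inequality through four measures, $W_1(\nu,\mu_n)\le W_1(\nu,\nu_n)+W_1(\nu_n,\rho_n)+W_1(\rho_n,\mu_n)$, which requires three separate lemmas: Lemma~\ref{ceilfl} to control the floor/ceiling discrepancy $\tfrac1n x_n$, Lemma~\ref{detlem} to prove $L^1$ convergence of $\phi_n$ to $\phi$ on \emph{all} of $[0,1]$ (this is where the $\log n/n$ endpoint estimates are needed), and Lemma~\ref{detlm2}, where Lemma~\ref{cut} is applied with the \emph{random} defect $a=\tfrac1n|x_n-\Ee[x_n]|$. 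You instead adopt the ceiling convention from the start, so that $q_n$ is exactly the quantile function of $\mu_n$ and $\int_0^1 q_n=1$ holds pathwise by \eqref{fk6}; together with your observation that $\int_0^1\phi=1$ (by Fubini, mirroring $\int_0^\infty xg\,{\rm d}x=1$), this lets Lemma~\ref{cut} run with $a=0$ and trades the uncontrolled tail directly for the deterministic quantity $2\int_{\xi_*}^1\phi$. That single move dispenses with Lemma~\ref{ceilfl} entirely and reduces Lemma~\ref{detlem} to local-uniform convergence on $[0,\xi_*]$, obtained by your Riemann-sum argument, which is the same computation as \eqref{ffk52}--\eqref{ffk55} restricted away from the endpoint. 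Notably, the paper remarks after \eqref{fk81} that the ceiling convention ``would be convenient, but then some estimates that follow would be more complicated''; your truncate-first organization shows the feared complication (the top order statistic, whose term in \eqref{Tndef} carries the denominator $n+1-j=1$) never has to be estimated at all, since it lives in the discarded region $\xi>\xi_*$. Two small remarks: you invoke the exact one-dimensional identity $W_1=\int_0^1|q_n-\phi|$, but only the ``$\le$'' direction is used, which also follows from the Lipschitz test-function bound the paper employs, so nothing beyond the paper's toolkit is really needed; and your conclusion $\Eetn[W_1(\mu_n,g\,{\rm d}x)]\to0$ is in fact slightly stronger than the convergence in probability asserted in \eqref{fk95}, and being of the form $C(\xi_*)\sqrt{\epsilon}+2\int_{\xi_*}^1\phi$ it is quantitative enough to recover a rate of the type \eqref{fk94c}. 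What the paper's longer route buys in exchange is the intermediate objects $\phi_n$ and $\nu_n$, which are reused elsewhere (e.g.\ in Section~\ref{subsec:strongchaos} and in extracting the explicit rate); your version is the leaner proof of the theorem as stated. The standing assumption $\alpha\in\,]0,2[$ from \eqref{fk5} is needed for your bound $\phi'\ge\alpha>0$, as in the paper.
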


Theorem~\ref{mainX} gives conditions for  $\{\hat{\tau}_n\}$ to be
$(\alpha,g)$ chaotic for a probability density $g$ on $\R_+$ that is
determined by $\alpha$ and $w$.   Notice that as long as 
  $w(1) \neq 0$ we have $\lim_{\xi\to1}\phi(\xi) = \infty$, and if in
addition  $\alpha >0$ 
or if $w$ does 
not vanish on any interval, then $\phi$ is strictly increasing, so
that $\phi$ is invertible from $[0,1]$ to $[0,\infty[$, and evidently
it is differentiable.  It is also  possible to invert the relation
between $g$ and $w$, so that given an appropriate density $g$, one can
find the $w$ for which \eqref{fk46} and \eqref{mainX2} yield $g$: 

\begin{thm}\label{minX} Let $\alpha \in ]0,2[$. Let $g(x)$ be a
  probability density on $\R_+$ such that   
\begin{equation}\label{fk44}
g(x) < \frac{1}{\alpha} \ {\rm a.s.}\quad{\rm and}\quad \int_0^\infty
xg(x){\rm d}x = 1\ . 
\end{equation} 
Let $G(x) = \int_0^x g(t){\rm d}t$ denote the distribution function of $g$, and for $\xi\in [0,1]$ define 
\begin{equation}\label{fk45}
w(\xi) := \frac{1}{1-\alpha/2}\left(\frac{1}{g(G^{-1}(\xi))} -\alpha \right)(1-\xi)\ .
\end{equation} 
Then $w$ is a probability density on $[0,1]$, and with $\phi$ defined
as in~(\ref{fk46}) 
\begin{equation}\label{fk47}
g(x) = \frac{1}{\phi'(\phi^{-1}(x))}\ .
\end{equation} 
and
\begin{equation}\label{fk47V}
\frac{\alpha g(x)}{1- \alpha g(x)} =  \frac{2\alpha}{2- \alpha} \frac{1 - \phi^{-1}(x)}{w(\phi^{-1}(x))}\ .
\end{equation} 

\end{thm}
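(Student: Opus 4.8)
The plan is to show that the function $\phi$ built from $w$ via \eqref{fk46} is exactly the quantile function $G^{-1}$. Once this identification is made, the density formula \eqref{fk47} is immediate, the relation \eqref{fk47V} reduces to algebra, and the statement that $w$ is a probability density follows from one change of variables.

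First I would substitute the definition \eqref{fk45} of $w$ into \eqref{fk46}. The factor $(1-t)$ in \eqref{fk45} cancels the $1/(1-t)$ appearing in the integrand of \eqref{fk46}, and the prefactor $1/(1-\alpha/2)$ cancels the $(1-\alpha/2)$ in front of the integral, so that
\[
\phi(\xi) = \int_0^\xi \left(\frac{1}{g(G^{-1}(t))} - \alpha\right){\rm d}t + \alpha\xi = \int_0^\xi \frac{1}{g(G^{-1}(t))}{\rm d}t\ .
\]
Since $G' = g$ and $G(0)=0$, the inverse function theorem gives $(G^{-1})'(t) = 1/g(G^{-1}(t))$, whence the last integral equals $G^{-1}(\xi) - G^{-1}(0) = G^{-1}(\xi)$. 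Thus $\phi = G^{-1}$, equivalently $\phi^{-1} = G$. Plugging this into $g(x) = 1/\phi'(\phi^{-1}(x))$ and using $\phi'(\xi) = (G^{-1})'(\xi) = 1/g(G^{-1}(\xi))$ at $\xi = G(x)$ yields \eqref{fk47} at once.

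Next, for the claim that $w$ is a probability density, non-negativity is immediate: $1-\alpha/2 > 0$ and $1-\xi \geq 0$ on $[0,1]$, while $1/g(G^{-1}(\xi)) - \alpha \geq 0$ because the hypothesis $g < 1/\alpha$ a.s. in \eqref{fk44} gives $g(G^{-1}(\xi)) \leq 1/\alpha$. For the normalization I change variables $\xi = G(x)$, ${\rm d}\xi = g(x){\rm d}x$, which turns $\int_0^1 w\,{\rm d}\xi$ into
\[
\frac{1}{1-\alpha/2}\int_0^\infty (1 - \alpha g(x))(1-G(x))\,{\rm d}x = \frac{1}{1-\alpha/2}\left(\int_0^\infty (1-G)\,{\rm d}x - \alpha \int_0^\infty g(1-G)\,{\rm d}x\right)\ .
\]
The first integral equals the mean $\int_0^\infty x g\,{\rm d}x = 1$ (Fubini, together with \eqref{fk44}), and the second equals $1/2$ since $g(1-G)$ is the derivative of $-(1-G)^2/2$. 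Hence $\int_0^1 w\,{\rm d}\xi = (1-\alpha/2)^{-1}(1-\alpha/2) = 1$. Finally, for \eqref{fk47V} I substitute $\phi^{-1}(x) = G(x) =: \xi$ into the right-hand side: from \eqref{fk45} one reads off $\tfrac{1-\xi}{w(\xi)} = (1-\alpha/2)\,g(x)/(1-\alpha g(x))$, and since $\tfrac{2\alpha}{2-\alpha}(1-\alpha/2) = \alpha$, the right-hand side collapses to $\alpha g(x)/(1-\alpha g(x))$, as asserted.

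The only genuine subtlety — the main obstacle — is the regularity needed to legitimately apply the step $(G^{-1})' = 1/(g\circ G^{-1})$ and the change of variables $\xi = G(x)$. This is clean when $g$ is continuous and strictly positive on an interval support, so that $G$ is a $C^1$ diffeomorphism onto $]0,1[$; I would either state this as the operative regularity assumption or, to cover zeros and flat pieces of $g$, interpret all expressions through the quantile function $G^{-1}$ and carry out the verification on the support, where both sides are well defined.
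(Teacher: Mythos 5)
Your proposal is correct and follows essentially the same route as the paper's proof: the cancellation giving $\phi(\xi)=\int_0^\xi \bigl(g(G^{-1}(t))\bigr)^{-1}{\rm d}t$ and the identification $\phi=G^{-1}$, the change of variables $\xi=G(x)$ showing $\int_0^1 w\,{\rm d}\xi=1$, and the algebra for \eqref{fk47V} (which the paper performs via the identity $\phi'(\xi)=(1-\alpha/2)\,w(\xi)/(1-\xi)+\alpha$ rather than directly from \eqref{fk45}, an equivalent computation) all match. Your closing remark on the regularity needed for $(G^{-1})'=1/(g\circ G^{-1})$ flags a point the paper leaves implicit at the same level of formality, so nothing is missing relative to the paper's own argument.
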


\begin{proof} We compute, using the change of variables $x := G^{-1}(\xi)$, 
\begin{eqnarray}
(1- \alpha/2)\int_0^1w(\xi){\rm d}\xi &=&   \int_0^\infty (1- \alpha g(x))(1- G(x)){\rm d}x\nonumber\\
&=&  \int_0^\infty (1- G(x)){\rm d}x  -\alpha  \int_0^\infty  \alpha g(x)(1- G(x)){\rm d}x\nonumber\\
&=&  \int_0^\infty x g(x){\rm d}x - \frac12 \alpha\ .
\end{eqnarray}
Thus, whenever, $g(x) < 1/\alpha$ almost everywhere and $\int_0^\infty xg(x){\rm d}x = 1$,  $w(x)$ is a probability density on $[0,1]$.

With this choice of $w(\xi)$ in \eqref{fk46}, we find
\begin{equation}\label{fk42}
\phi(\xi) = \int_0^\xi \left(\frac{1}{g(G^{-1}(t))} - \alpha \right){\rm d}t + \alpha \xi  =  \int_0^\xi \frac{1}{g(G^{-1}(t))}{\rm d}t\,. 
\end{equation}
It follows that $\phi'(x) = 1/g((G^{-1}(\xi)) = (G^{-1}(\xi))'$ and then since $\phi(0) = G^{-1}(0) = 0$, $\phi(\xi) = G^{-1}(\xi)$. Thus, $G(x) = \phi^{-1}(x)$, and  \eqref{fk47} is valid.

Finally, by \eqref{fk47},
$$
\frac{\alpha g(x)}{1- \alpha g(x)}  =  \frac{\alpha}{\phi'(\phi^{-1}(x))- \alpha}\ ,
$$
and then since ${\displaystyle \phi'(\xi) = (1- \alpha/2)\frac{w(\xi)}{1-\xi} + \alpha}$, \eqref{fk47V} follows. 
\end{proof}

As an example, consider $g(x) = e^{-x}$, which satisfies \eqref{fk44} as long as $\alpha \leq 1$. Then $G(x) = 1- e^{-x}$, and then $G^{-1}(\xi) = -\log(1-\xi)$.
Therefore,
$$
w(\xi) = \frac{1}{1-\alpha/2}(1- \alpha(1-\xi))\ ,
$$
which is bounded on all of $[0,1]$. By Theorem~\ref{mainX} and Theorem~\ref{minX}, for all $\alpha\leq 1$, there exists a  $(\alpha,g)$-chaotic sequence.

We now prepare to prove Theorem~\ref{mainX}. The first step is to encode the empirical distribution into a random function as follows:
Define a random function $\psi_n : [0,1]\rightarrow \R^+$ by setting $x_0 = 0$, and then
$$\psi_n(\xi) := x_{k-1} \quad{\rm for} \quad \frac{k-1}{n} \leq \xi < \frac{k}{n}\ . \qquad 1 \leq k \leq n\ .
$$
Explicitly, 
\begin{align}
\label{eq:18}
    \psi_n(\xi) &= x_{\lfloor n\xi\rfloor} = \left(1-\frac{\alpha_n}{2}\right)
                  \sum_{j=1}^{\lfloor \xi n\rfloor } \frac{\tz_j}{
                  1-\frac{j-1}{n}}    + \frac{\alpha_n(\lfloor n
                  \xi\rfloor-1)_+}{n-1}\,,
\end{align}
where $\lfloor n \xi \rfloor$ is the largest integer $k$ such that
$k \leq n\xi $.  The point of the definition is this:  Let $\chi$ be
any $1$-Lipschitz function on $\R_+$ with $\chi(0) = 0$. Then on
account of \eqref{fk6}, $\chi$ is, with probability $1$,   integrable
with respect to the empirical distribution $\mu_n$, 
and one has
\begin{equation}\label{fk81}
\int_0^\infty \chi {\rm d}\mu_n =  \int_0^1 \chi(\psi_n(\xi)){\rm d}\xi + \frac1n \chi(x_n)\ .
\end{equation}
Define $\rho_n$ to be the push-forward under $\psi_n$ of the  uniform measure
on $[0,1]$,  so that  we can rewrite \eqref{fk81} as  
$$\mu_n = \rho_n + \frac1n \delta(x- x_n) -\frac1n\delta(x)\ .$$  Had
we used the ceiling function $\lceil \cdot \rceil$ in place of the
floor function $\lfloor \cdot \rfloor$, we would have had $\mu_n =
\rho_n$, and then we would have
$$
\int_0^1 \psi_n(\xi){\rm d}\xi = \frac1n \sum_{j=1}^n x_j = 1\ ,
$$
so that $\psi_n$ would be a random probability distribution.  This would be convenient, but
then some estimates that follow would be more
complicated. It is easy to estimate the small difference:

\begin{lm}\label{ceilfl}  We have
\begin{equation}\label{xntoz}
\lim_{n\to\infty}\frac1n \Ee[x_n] = 0\ ,
\end{equation}
and for all $\delta >0$,
\begin{equation}\label{fk93}
\lim_{n\to\infty}\Pe\{ W_1(\mu_n,\rho_n) > \delta\} = 0\,.
\end{equation}
\end{lm}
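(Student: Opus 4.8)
The plan is to reduce the probabilistic statement \eqref{fk93} to the expectation bound \eqref{xntoz}, and then to prove \eqref{xntoz} directly from the explicit parameterization in Lemma~\ref{param}; only the first-moment structure is needed, so the variance and covariance hypotheses of the pre-chaotic definition play no role here. For the reduction, recall that $\mu_n$ and $\rho_n$ differ only in that $\rho_n$ places mass $\frac1n$ at $x_0=0$ where $\mu_n$ places mass $\frac1n$ at $x_n$. Transporting this single atom of mass $\frac1n$ from $0$ to $x_n$ yields the deterministic bound $W_1(\mu_n,\rho_n)\le x_n/n$; equivalently, testing against $\chi\in\mathrm{Lip}_1$ with $\chi(0)=0$ in \eqref{KR} and using $|\chi(x_n)|\le x_n$ gives the same. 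Markov's inequality then gives $\Pe\{W_1(\mu_n,\rho_n)>\delta\}\le \Ee[x_n]/(n\delta)$, so \eqref{fk93} follows at once from \eqref{xntoz}.

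For \eqref{xntoz}, I would take $j=n$ in \eqref{Tndef} to write $x_n = n(1-\alpha_n/2)\sum_{j=1}^n \tfrac{\tz_j}{n+1-j} + \alpha_n$, whence $\frac1n\Ee[x_n] = (1-\alpha_n/2)\,S_n + \frac{\alpha_n}{n}$, where $S_n:=\sum_{j=1}^n \tfrac{\Ee_{\tau_n}[\tz_j]}{n+1-j}$. Since $\alpha_n\to\alpha$ is bounded, the task reduces to showing $S_n\to 0$. Inserting $\Ee_{\tau_n}[\tz_j]=\frac1n(w(j/n)+r(j/n))$ from \eqref{wa5}, I then split the sum at the index $\lfloor n\xi_*\rfloor$ for a cutoff $\xi_*\in(0,1)$ to be chosen at the end.

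The two key observations are that $n+1-j\ge 1$ for every $j\le n$ and that $\sum_{j=1}^n \Ee_{\tau_n}[\tz_j]=1$. For the head $j\le n\xi_*$ the denominator satisfies $n+1-j\ge n(1-\xi_*)$, so this part is at most $\frac{1}{n(1-\xi_*)}\sum_j\Ee_{\tau_n}[\tz_j]=\frac{1}{n(1-\xi_*)}$, which tends to $0$ for fixed $\xi_*$. For the tail $j>n\xi_*$ I bound $\tfrac{1}{n+1-j}\le 1$ and write $\sum_{j>n\xi_*}\Ee_{\tau_n}[\tz_j]=1-\sum_{j\le n\xi_*}\Ee_{\tau_n}[\tz_j]$; by \eqref{wb5} together with the continuity (hence Riemann integrability) of $w$ on $[0,\xi_*]$, the head mass $\sum_{j\le n\xi_*}\Ee_{\tau_n}[\tz_j]$ is within $o(1)$ of $\int_0^{\xi_*}w(t){\rm d}t$, so the tail mass is at most $\int_{\xi_*}^1 w(t){\rm d}t + o(1)$. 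Because $w$ is integrable, $\int_{\xi_*}^1 w\to 0$ as $\xi_*\uparrow 1$, and a standard $\xi_*$-then-$n$ limiting argument gives $S_n\to 0$.

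I expect the only real subtlety to be the behavior near $j=n$: this is precisely where $w$ may diverge (as anticipated in Remark~\ref{equil}) and where the weight $1/(n+1-j)$ is largest, so a naive comparison of $S_n$ with $\int_0^1 \tfrac{w(t)}{1-t}{\rm d}t$ fails, that integral being typically infinite. The point to emphasize is that one never needs pointwise control of $\Ee_{\tau_n}[\tz_j]$ for $j$ near $n$: the uniform bound $1/(n+1-j)\le 1$, the global constraint $\sum_j\Ee_{\tau_n}[\tz_j]=1$, and the integrability of $w$ together absorb the singular region, so the pre-chaotic hypothesis \eqref{wb5} is invoked only on the head $j\le n\xi_*$.
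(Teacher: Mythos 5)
Your proposal is correct, and it shares the paper's overall skeleton --- the identical reduction of \eqref{fk93} to \eqref{xntoz} via $1$-Lipschitz test functions with $\chi(0)=0$ and Markov's inequality, followed by the same split of $\sum_{j=1}^n \Ee[\tz_j]/(n+1-j)$ at $\lfloor n\xi_*\rfloor$ --- but you treat the two pieces differently, and more economically. For the head $j\le n\xi_*$, the paper recognizes $n\sum_{j\le n\xi_*}\Ee[\tz_j]/(n+1-j)$ as a Riemann sum converging (via \eqref{wa5}) to $\int_0^{\xi_*} w(t)(1-t)^{-1}{\rm d}t$, so the head is $O(1/n)$; you reach the same $O(1/n)$ conclusion from the hypothesis-free bound $1/(n+1-j)\le 1/(n(1-\xi_*))$ combined with $\sum_j \Ee[\tz_j]=1$, so the pre-chaotic hypothesis never enters your head estimate at all. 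For the tail $j>n\xi_*$, both arguments use $1/(n+1-j)\le 1$, but the paper then invokes Lemma~\ref{L1lm} (the $L^1$ convergence $w_n\to w$) to bound the tail mass by $\int_{\xi_*-1/n}^1 w(\xi){\rm d}\xi+\epsilon$, whereas you bound it by complementarity, $1-\sum_{j\le n\xi_*}\Ee[\tz_j]$, controlling only the head mass through \eqref{wb5} and Riemann integrability of $w$ on $[0,\xi_*]$ and using $\int_0^1 w=1$ --- in effect re-deriving the restriction of Lemma~\ref{L1lm} to $[0,\xi_*]$, which is the only part that is actually needed. What each buys: your version is self-contained and isolates the minimal inputs (first moments, total mass one, integrability of $w$), confirming your observation --- which matches the paper's proof --- that the variance and covariance conditions \eqref{w4} play no role in this lemma; the paper's version reuses machinery already established (Lemma~\ref{L1lm}) and its head computation with the integrand $w(t)/(1-t)$ directly foreshadows the limit function $\phi$ of \eqref{fk46}, keeping the constants aligned with the rest of the development. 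One cosmetic point: \eqref{wb5} is stated for $j<n\xi_*$, so the single boundary index $j=\lfloor n\xi_*\rfloor$ should be assigned to the tail piece (or $\xi_*$ nudged); this does not affect the argument.
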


\begin{proof}
Since $\chi$ is $1$-Lipschitz with $\chi(0) = 0$, 
$$\left|\int_0^\infty \chi {\rm d}\mu_n - \int_0^\infty \chi {\rm d}\rho_n \right|
=\frac1n |\chi(x_n)| \leq \frac1n x_n\ ,$$ 
and hence
$W_1(\mu_n,\rho_n) \leq \frac{1}{n} x_n$.  Thus, once we have proved \eqref{xntoz}, \eqref{fk93} follows by Markov's inequality.

Now  \eqref{eq:15} yields
\begin{equation*}
 \Ee x_n
=   \Ee\left(n \left(1-\frac{\alpha_n}{2}\right) \left(\frac{\tz_1}{n}+
  \frac{\tz_2}{(n-1)}+\cdots+\frac{\tz_n}{1}\right) +
  \alpha_n\right)\ .
  \end{equation*}
Pick $0 <\xi_* < 1$, and split the sum into two pieces
$$
\sum_{k=1}^n \frac{\Ee[\tz_k]}{n-k+1}  =  \sum_{k\leq  \lfloor n\xi_*\rfloor } \frac{\Ee[\tz_k]}{n-k+1}  +   \sum_{k>  \lfloor n\xi_*\rfloor} \frac{\Ee[\tz_k]}{n-k+1}\ .
$$
The last term satisfies
$$
\sum_{k>  \lfloor n\xi_*\rfloor} \frac{\Ee[\tz_k]}{n-k+1}  \leq \sum_{k> \lfloor n\xi_*\rfloor } \Ee[\tz_k] \leq  \int_{\xi_*-1/n}^1  w_n(\xi){\rm d}\xi\ ,
$$
and by Lemma~\ref{L1lm}, for  and $\epsilon> 0$ sufficiently large
$n$, this is bounded above by  
$$
\int_{\xi_*- 1/n}^1  w(\xi){\rm d}\xi  + \epsilon
$$
uniformly in $\xi_*$. Because
$$
\lim_{\xi_*\uparrow 1} \int^1_{\xi_*}  w(\xi){\rm d}\xi\ =0\ .
$$
we can choose $\xi_* < 1$ so that 
$$
\sum_{k>  \lfloor n\xi_*\rfloor} \frac{\Ee[\tz_k]}{n-k+1}  < \epsilon$$ for all sufficiently large $n$.  Next, 
$$
 n\sum_{k\leq n\xi_*} \frac{\Ee[\tz_k]}{n-k+1} =    \sum_{k\leq
   n\xi_*} \frac{n\Ee[\tz_k]} {1-(k-1)/n}  \frac1n =   \sum_{k\leq
   n\xi_*} \frac{(w_n(k-1)/n)} {1-(k-1)/n}  \frac1n  
$$
By \eqref{wa5}, for all $\epsilon>0$ this is bounded by 
$$
\int_0^{\xi_*} \frac{w(t)}{1-t}{\rm d}t + 2\epsilon
$$
for all sufficiently large $n$.   Altogether, for all $\epsilon>0$ and all sufficiently large $n$, 
$$\frac1n \Ee[x_n] \leq (1- \alpha_n/2)\left(\frac{1}{n}\left(\int_0^{\xi_*} \frac{w(t)}{1-t}{\rm d}t + 2\epsilon + \alpha_n \right) + \epsilon\right)
$$
for all sufficiently large $n$. Since $\epsilon>0$ is arbitrary, this proves \eqref{xntoz}.
\end{proof}

Now let $n\to \infty$ with $\alpha_n \to \alpha$. We shall show below  that if  $\{\hat\tau_n\}$ is the push forward of a $w$-pre-chaotic sequence $\{\tau_n\}$  of probability densities on $S_1$, and $\mu_n$ is the corresponding sequence of empirical measures,
then along this limit,  the variance of $\psi_n(\xi)$  converges to zero, and moreover, its expectation
$\phi_n(\xi) := \Ee[ \psi_n(\xi)]$ converges to a limiting function
$\phi := \lim_{n\to\infty}\phi_n$.  In this case
$$
\lim_{n\to \infty} \int_0^\infty \chi {\rm d}\mu_n =  \int_0^1
\chi(\phi(\xi)){\rm d}\xi  =  \int_0^\infty \chi(x) f_\alpha(x){\rm
  d}x \ . 
$$
with convergence in probability,  where $f_\alpha(x) :=
1/\phi'(\phi^{-1}(x))$, as in Theorem~\ref{main}.

Computing the expectation of $\psi_n(\xi)$, we see that
\begin{align}
\label{eq:19}
\phi_n(\xi) &= \left(1-\frac{\alpha_n}{2}\right) \sum_{j=1}^{\lfloor \xi n\rfloor }
              \frac{\Ee[\tz_j]}{ 1-\frac{j-1}{n}}    + \frac{\alpha_n (\lfloor
              n \xi\rfloor-1)_+}{n-1} \nonumber \\
            &= \left(1- \frac{\alpha_n}{2}\right)
              \frac{1}{n}\sum_{j=1}^{\lfloor \xi n\rfloor}
              \frac{w_n(j/n)}{1-\frac{j-1}{n}} +\frac{(\lfloor \xi
              n\rfloor-1)_+}{(n-1) \xi} \alpha_n \xi \ .
\end{align}
For $\frac{j-1}{n} < t \leq \frac{j}{n}$
\begin{equation}\label{ffk53}
 \frac{1}{1-t} -  \frac{1}{n} \frac{1}{(1-t)^2} \leq
 \frac{1}{1-\frac{j-1}{n}}   \leq   \frac{1}{1-t} \ ,
\end{equation}
 and therefore
\begin{equation}\label{ffk52}
 \int_0^\xi \frac{w_n(t)}{1-t}{\rm d}t  -\frac1n  \int_0^\xi \frac{w_n(t)}{(1-t)^2}{\rm d}t \leq  \frac{1}{n}\sum_{j=1}^{\lfloor \xi n\rfloor} \frac{w_n(j/n)}{1-\frac{j-1}{n}}    \leq  \int_0^\xi \frac{w_n(t)}{1-t}{\rm d}t\ .
\end{equation}
Setting
\begin{equation}\label{eq:19B}
\widetilde{\phi}_n(\xi) := (1-\alpha_n/2)\int_0^\xi \frac{w_n(t)}{1-t}{\rm d}t  +\frac{(\lfloor \xi
              n\rfloor-1)_+}{(n-1) \xi}\alpha_n \xi   \geq \phi_n(x)\ ,
\end{equation}
we have
\begin{equation}\label{eq:19C}
|\phi_n(\xi) - \widetilde{\phi}_n(\xi)| \leq  (1-\alpha_n/2) \frac1n  \int_0^\xi \frac{w_n(t)}{(1-t)^2}{\rm d}t  \leq (1-\alpha_n/2) \frac{C}{n}\frac{1}{1-\xi}\ .
\end{equation}
Note also that 
\begin{equation}\label{ffk55}
 1- \frac{2}{\xi}\frac{1}{n-2} \leq \frac{(\lfloor \xi n\rfloor-1)_+}{(n-1)\xi}\le  1
\ ,
\end{equation}
 and therefore, if we assume that $\alpha_n \rightarrow \alpha$,
\begin{align}
  \label{eq:20}
\phi_n(\xi) &\rightarrow (1-\alpha/2)\int_0^\xi \frac{w(t)}{1-t}{\rm d}t
              + \alpha \xi  =: \phi(\xi)
\end{align}
when $n\rightarrow\infty$. 

We then
have from \eqref{ffk52}, \eqref{eq:19C} and \eqref{ffk55}    for
all $\xi$, 
\begin{eqnarray}\label{ffk56}
| \phi(\xi) -\phi_n(\xi)|  &\leq&   | \phi(\xi) -\widetilde{\phi}_n(\xi)|  +  |\widetilde{\phi}_n (\xi) -\phi_n(\xi)|\nonumber\\
&\leq& (1-\alpha/2) \int_0^\xi \frac{|w(t)- w_n(t)|}{1-t}{\rm d}t  + \left( 1 - \frac{(\lfloor \xi n\rfloor-1)_+}{(n-1)\xi}\right)\alpha  \nonumber\\\
&+& \widetilde{\phi}_n (\xi) -\phi_n(\xi)\ . 
\end{eqnarray}

Now define   $\nu_n$ to be the probability measure on $\R_+$ that is
the push-forward of the uniform probability measure on $[0,1]$  under
$\phi_n$, and let $\nu$ be determined by $\phi$ in the same way.  

\begin{lm}\label{detlem}  We have
\begin{equation}\label{philim}
\lim_{n\to\infty}\int_0^1 |\phi_n(\xi) -
\phi(\xi)|{\rm d}\xi  = 0\  .
\end{equation}
and
\begin{equation}\label{delm1}
\lim_{n\to\infty} W_1(\nu_n,\nu) =0\ .
\end{equation}
\end{lm}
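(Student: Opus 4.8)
The plan is to deduce \eqref{delm1} from the $L^1$ convergence \eqref{philim} by a \emph{synchronous coupling}, so the real work lies in \eqref{philim}. The essential obstruction is that both $\phi$ and $\phi_n$ may blow up as $\xi\uparrow 1$; indeed the pointwise bound \eqref{eq:19C} on $\widetilde\phi_n-\phi_n$ carries a non-integrable factor $1/(1-\xi)$, so one cannot simply integrate \eqref{ffk56} over all of $[0,1]$. This is exactly the situation Lemma~\ref{cut} was set up to handle: control the global $L^1$ distance by the distance on a fixed interval $[0,\xi_*]$, a tail of $\phi$, and a small discrepancy between the total masses.

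First I would record two integral identities. Applying Tonelli to \eqref{fk46} gives $\int_0^1\phi(\xi)\,{\rm d}\xi=(1-\alpha/2)\int_0^1 w(t)\,{\rm d}t+\alpha/2=1$, so $\phi\ge 0$ is integrable on $[0,1]$ and its tail $\int_{\xi_*}^1\phi\,{\rm d}\xi\to 0$ as $\xi_*\uparrow 1$. On the discrete side, from \eqref{eq:18} one computes $\int_0^1\psi_n(\xi)\,{\rm d}\xi=\frac1n\sum_{j=0}^{n-1}x_j=1-x_n/n$ using \eqref{fk6} and $x_0=0$, whence $\int_0^1\phi_n\,{\rm d}\xi=1-\Ee[x_n]/n=:1-a_n$ with $a_n\to 0$ by \eqref{xntoz} of Lemma~\ref{ceilfl}. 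In particular $\bigl|\int_0^1(\phi_n-\phi)\,{\rm d}\xi\bigr|=a_n$.

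Next I would show that for each fixed $\xi_*<1$ one has $\int_0^{\xi_*}|\phi_n-\phi|\,{\rm d}\xi\to 0$. Integrating \eqref{ffk56} over $[0,\xi_*]$, the first term, after Fubini, is at most $(1-\alpha/2)\int_0^1|w-w_n|\,{\rm d}t$, which tends to $0$ by Lemma~\ref{L1lm}; the second term has an integrand bounded by $\alpha$ that tends to $0$ pointwise on $]0,1]$ (by \eqref{ffk55}), hence vanishes by dominated convergence on the bounded interval $[0,\xi_*]$; and the third term is bounded via \eqref{eq:19C} by $(1-\alpha_n/2)\tfrac{C}{n}\int_0^{\xi_*}\frac{{\rm d}\xi}{1-\xi}=(1-\alpha_n/2)\tfrac{C}{n}\log\frac{1}{1-\xi_*}$, which tends to $0$ since the logarithm is finite for fixed $\xi_*<1$. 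This is the step where confining to $[0,\xi_*]$ is crucial, and it is the only delicate point.

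Finally I would combine these via Lemma~\ref{cut} applied with $f=\phi_n$, $g=\phi$ and $a=a_n$: for every $\xi_*<1$,
\begin{equation*}
\int_0^1|\phi_n-\phi|\,{\rm d}\xi\le 2\int_0^{\xi_*}|\phi_n-\phi|\,{\rm d}\xi+2\int_{\xi_*}^1\phi\,{\rm d}\xi+a_n\,.
\end{equation*}
Given $\varepsilon>0$, first pick $\xi_*<1$ with $2\int_{\xi_*}^1\phi<\varepsilon$; then for all large $n$ the remaining two terms are each below $\varepsilon$, which proves \eqref{philim}. For \eqref{delm1}, push Lebesgue measure on $[0,1]$ forward under $\xi\mapsto(\phi_n(\xi),\phi(\xi))$ to obtain a coupling of $\nu_n$ and $\nu$; its transport cost is $\int_0^1|\phi_n-\phi|\,{\rm d}\xi$, so by the coupling characterization of $W_1$ we get $W_1(\nu_n,\nu)\le\int_0^1|\phi_n-\phi|\,{\rm d}\xi\to 0$. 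The main obstacle throughout is the loss of integrability near $\xi=1$, which the split in Lemma~\ref{cut} is precisely designed to circumvent.
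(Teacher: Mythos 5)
Your proof is correct, but it takes a genuinely different route at the one delicate point, the tail near $\xi=1$. The paper estimates $\int_0^1|\phi_n-\phi|\,{\rm d}\xi$ directly over the whole interval: the first term of \eqref{ffk56} is handled by the same Fubini cancellation you use (the $\xi$-integration produces a factor $1-t$ that kills the singular $1/(1-t)$, so no cut-off is needed there), the second term is split at $\xi=1/n$ giving $1/n+2\log n/(n-2)$, and the third term is split at $\xi=1-1/n$, with $\int_{1-1/n}^1\widetilde{\phi}_n\,{\rm d}\xi$ bounded by an explicit logarithmic estimate on $\widetilde{\phi}_n$ itself. You never integrate \eqref{ffk56} past a fixed $\xi_*<1$; instead you absorb the tail through the exact mass identities $\int_0^1\phi\,{\rm d}\xi=1$ and $\int_0^1\phi_n\,{\rm d}\xi=1-\Ee[x_n]/n$ (the latter from $\sum_j x_j=n$ on $\SEnes$ together with \eqref{xntoz}) combined with Lemma~\ref{cut} --- the same device the paper deploys in Lemma~\ref{detlm2} to compare $\psi_n$ with $\phi_n$, here transplanted to compare $\phi_n$ with $\phi$. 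Your version buys robustness: it uses only the integrability of $\phi$ and avoids any pointwise control of $\widetilde{\phi}_n$ near $\xi=1$, whereas the paper's tail bound $\int_{1-1/n}^1\widetilde{\phi}_n\,{\rm d}\xi\le(1-\alpha/2)\int_{1-1/n}^1\log\frac{1}{1-\xi}\,{\rm d}\xi+\alpha/n$ implicitly rests on a bound like $w_n\le 1$, immediate for the equilibrium case $w_n\equiv1$ but not granted for a general pre-chaotic $w$ that may be unbounded near $1$. What your route gives up is quantitativeness: your dominated-convergence step for the middle term and your appeal to \eqref{xntoz} are rate-free, while the paper's computation yields explicit $O(\log n/n)$ bounds that feed into the rate \eqref{fk94c} of Theorem~\ref{main} (the proof of Theorem~\ref{main} points precisely to the last displayed inequalities of this lemma's proof for the rate); to recover a rate you would need the paper's explicit bound for the middle term and a quantitative form of Lemma~\ref{ceilfl}. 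Two small remarks: \eqref{ffk56} as displayed presumes $\alpha_n=\alpha$, so for $\alpha_n\to\alpha$ you should add the paper's correction $\int_0^1|\phi-\tilde{\phi}|\,{\rm d}\xi\le\frac32|\alpha-\alpha_n|$, where $\tilde{\phi}$ is $\phi$ with $\alpha$ replaced by $\alpha_n$ (your mass identity, by contrast, is exact for every $\alpha_n$); and your coupling derivation of \eqref{delm1} is just the primal counterpart of the paper's dual argument with Lipschitz test functions --- both amount to $W_1(\nu_n,\nu)\le\int_0^1|\phi_n-\phi|\,{\rm d}\xi$.
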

\begin{proof}[Proof of Lemma~\ref{detlem}] Let  $\chi\in {\rm Lip}_1$. Then
$$\left| \int_0^1 \chi(\phi_n(\xi)){\rm d}\xi -  \int_0^1
  \chi(\phi(\xi)){\rm d}\xi \right| \leq \int_0^1 |\phi_n(\xi) -
\phi(\xi)|{\rm d}\xi$$ 
It remains to show  \eqref{philim}.
We estimate each of the terms coming from \eqref{ffk56}.

Suppose first  that $\alpha_n = \alpha$ for all $n$.   To bound the integral of  the first term on the right in \eqref{ffk56},
 change the order of integration:
\begin{eqnarray*}
\int_0^1\left( \int_0^\xi \frac{|w(t)- w_n(t)|}{1-t}{\rm d}t\right){\rm d}\xi &=& \int_0^1\left( \int_t^1 \frac{|w(t)- w_n(t)|}{1-t}{\rm d}\xi\right){\rm d}t\\
  &=& \int_0^1 |w(t)- w_n(t)|{\rm d}t\ ,
\end{eqnarray*}
and by Lemma~\ref{L1lm}, the right side vanishes in the limit $n\to\infty$.  Making the obvious addition and subtraction argument, we see that the same conclusion holds under the assumption that $\lim_{n\to \infty}\alpha_n = \alpha$. 

Next, to estimate $\int_0^1\left( 1 - \frac{(\lfloor \xi n\rfloor-1)_+}{(n-1)\xi}\right){\rm d}\xi$  we break the integral up into two pieces, and use \eqref{ffk55} away from $\xi =0$:
$$
\int_0^1\left( 1 - \frac{(\lfloor \xi n\rfloor-1)_+}{(n-1)\xi}\right){\rm d}\xi \leq \int_0^{1/n}1{\rm d}\xi  + \int_{1/n}^1 \frac{2}{\xi}\frac{1}{n-2}{\rm d}\xi  = \frac1n + \frac{2\log n}{n-2}\ ,
$$
and this too vanishes in the limit $n\to\infty$.

Finally, to estimate $\int_0^1(\widetilde{\phi}_n (\xi) -\phi_n(\xi)){\rm d}\xi$, we break the integral up into two pieces, but at the other end, and use  \eqref{eq:19C}:
$$
\int_0^{1-1/n}  (\widetilde{\phi}_n (\xi) -\phi_n(\xi)){\rm d}\xi  \leq (1-\alpha/2) \frac{C}{n}\int_0^{1-1/n} \frac{1}{1-\xi} {\rm d}\xi =(1-\alpha/2) \frac{C\log n}{n}
$$
while
\begin{eqnarray*}
\int_{1-1/n}^1  (\widetilde{\phi}_n (\xi) -\phi_n(\xi)){\rm d}\xi  &\leq&  \int_{1-1/n}^1  \widetilde{\phi}_n (\xi) {\rm d}\xi \leq (1-\alpha/2)\int_{1-1/n}^1 \log(1-\xi){\rm d}\xi  +\frac1n\alpha \\
&=& (1-\alpha/2)\frac{\alpha  + 1 + \log n}{n}\,.
\end{eqnarray*}

To pass to the general case,  let $\tilde \phi$ denote the function
$\phi$ with $\alpha$ replaced by some $\alpha_n\in ]0,2[$.  Then it is
easy to see that $\int_0^1 |\phi - \tilde \phi|{\rm d}\xi \leq
\frac32|\alpha - \alpha_n |$. 
Now  one more application of the triangle inequality yields
\eqref{philim} in general.
\end{proof}

\begin{lm}\label{detlm2} 
${\displaystyle \lim_{n\to\infty}\Pe\{ W_1(\rho_n, \nu_n) > \delta\}
  =0}$. Recall here that $\rho_n$ is the pushfoward of the
  uniform measure on $[0,1]$ by the map $\psi_n$ from eq.~(\ref{eq:18}).
\end{lm}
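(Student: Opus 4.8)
The plan is to reduce the Wasserstein bound to an $L^1$ estimate on $[0,1]$ and then control that estimate in expectation. Both $\rho_n$ and $\nu_n$ are push-forwards of the uniform measure on $[0,1]$ — the first under the random map $\psi_n$ of \eqref{eq:18}, the second under its mean $\phi_n(\xi) = \Ee[\psi_n(\xi)]$. Testing against $\chi\in{\rm Lip}_1$ and using $|\chi(\psi_n(\xi)) - \chi(\phi_n(\xi))|\leq|\psi_n(\xi)-\phi_n(\xi)|$ gives the common-parametrization coupling bound
$$
W_1(\rho_n,\nu_n) \leq \int_0^1 |\psi_n(\xi) - \phi_n(\xi)|\,\di\xi\ .
$$
Hence it suffices to show this random integral tends to $0$ in probability, and by Markov's inequality it is enough to prove that its expectation tends to $0$.

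First I would fix a cutoff $0 < \xi_* < 1$ and split $\int_0^1 = \int_0^{\xi_*} + \int_{\xi_*}^1$. On $[0,\xi_*]$ I use $\Ee[|\psi_n(\xi) - \phi_n(\xi)|]\leq\sqrt{\mathrm{Var}[\psi_n(\xi)]}$ and read off the variance from \eqref{eq:18}: only the random sum $\sum_{j=1}^{\lfloor\xi n\rfloor}\tz_j/(1-\tfrac{j-1}{n})$ contributes, with coefficients bounded by $1/(1-\xi_*)$ when $\xi\leq\xi_*$. Expanding the variance and inserting the pre-chaotic bounds \eqref{w4}, namely $\mathrm{Var}[\tz_j]\leq C\epsilon/n$ and $|\mathrm{Cov}(\tz_j,\tz_k)|\leq C\epsilon/n^2$ for $j,k < n\xi_*$, the diagonal contributes at most $m\cdot C\epsilon/n\leq C\epsilon$ and the off-diagonal at most $m^2\cdot C\epsilon/n^2\leq C\epsilon$, where $m=\lfloor\xi n\rfloor\leq n$. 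This yields $\mathrm{Var}[\psi_n(\xi)]\leq 2C\epsilon/(1-\xi_*)^2$ uniformly on $[0,\xi_*]$, so $\int_0^{\xi_*}\sqrt{\mathrm{Var}[\psi_n(\xi)]}\,\di\xi\to 0$ as $n\to\infty$ for each fixed $\xi_*$ (letting $\epsilon\downarrow 0$ through the pre-chaotic thresholds).

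The tail $\int_{\xi_*}^1$ is where this variance estimate breaks down, since the weights $1/(1-\tfrac{j-1}{n})$ blow up as $j\to n$ while \eqref{w4} is available only for $j<n\xi_*$; this is the main obstacle. I would sidestep it using only nonnegativity: $\psi_n,\phi_n\geq 0$ give $|\psi_n-\phi_n|\leq\psi_n+\phi_n$, and since $\Ee[\psi_n]=\phi_n$ this yields $\int_{\xi_*}^1\Ee[|\psi_n-\phi_n|]\,\di\xi\leq 2\int_{\xi_*}^1\phi_n(\xi)\,\di\xi$. By Lemma~\ref{detlem} (eq.~\eqref{philim}), $\phi_n\to\phi$ in $L^1([0,1])$, so the right side converges to $2\int_{\xi_*}^1\phi(\xi)\,\di\xi$.

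To close, I would note that $\phi$ is increasing and integrable: by Fubini, $\int_0^1\phi(\xi)\,\di\xi = (1-\alpha/2)\int_0^1 w(t)\,\di t + \alpha/2 = 1$, so $\int_{\xi_*}^1\phi(\xi)\,\di\xi\to 0$ as $\xi_*\uparrow 1$. Combining the two pieces gives $\limsup_{n}\Ee\!\left[\int_0^1|\psi_n-\phi_n|\,\di\xi\right]\leq 2\int_{\xi_*}^1\phi(\xi)\,\di\xi$ for every $\xi_*$, whence the expectation tends to $0$; Markov's inequality applied to $\int_0^1|\psi_n-\phi_n|\,\di\xi$ then delivers $\lim_{n\to\infty}\Pe\{W_1(\rho_n,\nu_n)>\delta\}=0$.
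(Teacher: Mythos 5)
Your proof is correct, and while the overall skeleton matches the paper's — the coupling bound $W_1(\rho_n,\nu_n)\leq\int_0^1|\psi_n-\phi_n|\,\di\xi$, Markov's inequality, a split at $\xi_*$, and the variance estimate from \eqref{w4} on $[0,\xi_*]$ followed by Cauchy--Schwarz — your treatment of the tail $[\xi_*,1]$ is genuinely different and simpler. The paper handles the tail by applying Lemma~\ref{cut} pathwise with $a=\tfrac1n|x_n-\Ee[x_n]|$, which forces it to control the fluctuation of the last coordinate via Lemma~\ref{ceilfl} (through $\tfrac1n\Ee[|x_n-\Ee[x_n]|]\leq\tfrac2n\Ee[x_n]\to0$). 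You instead exploit the fact that $\phi_n$ is \emph{exactly} the expectation of $\psi_n$, so that pointwise $\Ee[|\psi_n(\xi)-\phi_n(\xi)|]\leq\Ee[\psi_n(\xi)]+\phi_n(\xi)=2\phi_n(\xi)$; this eliminates both Lemma~\ref{cut} and the dependence on Lemma~\ref{ceilfl} from this step, and even avoids the paper's extra factor of $2$ on the near part. Your variance bound on $[0,\xi_*]$ is cruder than the paper's (uniform coefficient bound $1/(1-\xi_*)$ versus the paper's integral comparisons yielding $\epsilon C((1-\xi)^{-1}+(\log(1-\xi))^2)$), but it is valid since $C$ in \eqref{w4} may depend on $\xi_*$, and it suffices because $\epsilon$ is arbitrary. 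A further small improvement: where the paper merely asserts that $\phi$ is integrable, you verify it explicitly by Fubini, computing $\int_0^1\phi(\xi)\,\di\xi=(1-\alpha/2)\int_0^1 w(t)\,\di t+\alpha/2=1$, which cleanly justifies sending $\xi_*\uparrow1$. The only point shared with the paper that you inherit without comment is the harmless boundary case $j=\lfloor n\xi_*\rfloor$ versus the strict inequality $j<n\xi_*$ in \eqref{w4}, which is immaterial after shrinking $\xi_*$ slightly.
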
.

\begin{proof}
Take $\chi\in {\rm Lip}_1$ and estimate
$$\left| \int_0^1 \chi(\psi_n(\xi)){\rm d}\xi -  \int_0^1
  \chi(\phi_n(\xi)){\rm d}\xi\right| 
 \leq  
\int_0^1 |\chi(\psi_n(\xi))- \chi(\phi_n(\xi))|{\rm d}\xi\ ,$$
uniformly in $\chi$, and hence 
${\displaystyle W_1(\rho_n,\nu_n) \leq \int_0^1 |\psi_n(\xi)-
  \phi_n(\xi)|{\rm d}\xi}$. 
By  Markov's inequality, for any $\delta>0$,
\begin{equation}\label{Mark}
\Pe\{ W_1(\rho_n, \nu_n) > \delta\} 
 \leq \frac1\delta\Ee\left( \int_0^1 |\psi_n(\xi)- \phi_n(\xi)|{\rm
     d}\xi\right)\ . 
 \end{equation}
 
 Now note that 
${\displaystyle
 \int_0^1 \psi_n(\xi){\rm d}\xi  =  \frac1n \sum_{j=0}^{n-1} x_j = 1 -
 \frac1n x_n}$, 
 and likewise 
 $$
 \int_0^1 \phi_n(\xi){\rm d}\xi  =  \frac1n \sum_{j=0}^{n-1} \Ee[x_j]
 = 1 - \frac1n \Ee[x_n] \,.
 $$
 Therefore,
 $$
 \left|  \int_0^1 \psi_n(\xi){\rm d}\xi   - \int_0^1 \phi_n(\xi){\rm d}\xi    \right|  \leq  \frac1n |x_n- \Ee[x_n]|\ .
 $$
 Then by Lemma~\ref{cut}, 
 $$
  \int_0^1 |\psi_n(\xi)- \phi_n(\xi)|{\rm d}\xi  \leq 2  \int_0^{\xi_*} |\psi_n(\xi)- \phi_n(\xi)|{\rm d}\xi  +2 \int_{\xi_*}^1 \phi_n(\xi){\rm d}\xi +  \frac1n |x_n- \Ee[x_n]|\ .
 $$
 Next, by Lemma~\ref{detlem}
 $$
 \limsup_{n\to \infty} \int_{\xi_*}^1 \phi_n(\xi){\rm d}\xi  \leq
 \int_{\xi_*}^1 \phi(\xi){\rm d}\xi  +  \limsup_{n\to \infty} \int_0^1
 |\phi_n(\xi) - \phi(\xi)|{\rm d}\xi  = \int_{\xi_*}^1 \phi(\xi){\rm
   d}\xi \ , 
 $$
 and 
 $\frac1n\Ee[|x_n - \Ee[x_n]|] \leq  \frac2n \Ee[x_n]$ which tends to
 zero by Lemma~\ref{ceilfl}. 
Therefore,
\begin{eqnarray*}
\limsup_{n\to\infty} \Ee\left[  \int_0^1  |\psi_n(\xi)- \phi_n(\xi)|{\rm d}\xi \right]  &\leq& 2  \limsup_{n\to\infty} \Ee\left[  \int_0^{\xi_*}  |\psi_n(\xi)- \phi_n(\xi)|{\rm d}\xi \right]\\
&+& 2  \int_{\xi_*}^1 \phi(\xi){\rm d}\xi\ .
\end{eqnarray*}
We next show that the first term on the right is zero. Pick $\epsilon>0$. Then by \eqref{w4}, there is a constant $C$ depending only on $\xi_*$ such that  for some $n_\epsilon$
$$
 \mathrm{Var}[\tz_i] \leq \frac{C}{n}\epsilon \qquad{\rm and}\qquad| {\rm Cov}(\tilde z_j,\tilde z_k)| \leq \frac{C}{n^2}\epsilon   \qquad{\rm for\ all}\quad n> n_\epsilon, \ j,k<  n\xi_* \ . 
$$

We then have from Eq.~(\ref{eq:18} and~(\ref{eq:19}) that for all $\xi < \xi_*$
\begin{align}
\mathrm{Var}[ \psi_n(\xi)] &=
                             \Ee\left[(\psi_n(\xi)-\phi_n(\xi)])^2
                             \right]  \nonumber \\
&= \Ee\left[ \left( \left(1-\frac{\alpha_n}{2}\right) \sum_{j=1}^{\lfloor \xi n\rfloor }
  \frac{\tz_j-\Ee[\tz_j]}{ 1-\frac{j-1}{n}}  \right)^2
\right]\nonumber\\
&= \left(1-\frac{\alpha_n}{2}\right)^2 \left( \sum_{j=1}^{\lfloor n \xi \rfloor}
  \frac{\mathrm{Var}[\tz_j]}{\left(1-\frac{j-1}{n}\right)^2}
+ \sum_{\substack{ j,k = 1\\ j\ne k}}^{\lfloor n \xi \rfloor}
  \frac{\mathrm{Cov}[\tz_j,\tz_k]}{\left(1-\frac{j-1}{n}\right)
  \left(1-\frac{k-1}{n}\right)}
 \right)\,.
\end{align}
Using the bounds on $\mathrm{Var}[\tz_j]$ and
$\mathrm{Cov}[\tz_j,\tz_k]$ from Eq.~(\ref{w4}), 
for all sufficiently large $n$,
\begin{align}
  \mathrm{Var}[ \psi_n(\xi)]
  &\leq \left(1-\frac{\alpha_n}{2}\right)^2
    \Bigg( \sum_{j=1}^{\lfloor n \xi \rfloor}
    \frac{C\epsilon}{n\left(1-\frac{j-1}{n}\right)^2}+   \nonumber \\
&\qquad\qquad\qquad\qquad  
\sum_{\substack{ j,k = 1\\ j\ne
  k}}^{\lfloor n \xi \rfloor}
  \frac{C\epsilon}{n^2\left(1-\frac{j-1}{n}\right)\left(1-\frac{k-1}{n}\right)} 
 \Bigg)\,.
\end{align}
The first of the terms in the parentheses is smaller than
\begin{align}
\epsilon C
  \int_{0}^{\xi} \frac{1}{(1-\xi)^2} {\rm d}\xi   = \epsilon C \frac{1}{1-\xi}  \,,
\end{align}
and the second is smaller than 
\begin{align}
\epsilon C\left(\frac{1}{n}\sum_{k=1}^{\lfloor n \xi \rfloor}
  \frac{1}{1-\frac{k-1}{n}}  \right)^2 \le \epsilon C \left(\log
  (1-\xi) \right)^2 \,,
\end{align}
where, as above, we have used  \eqref{ffk52} and its  analog for $(1-\xi)^{-2}$. It follows that for all $\xi < \xi_*$,
\begin{align}
\mathrm{Var}[\psi_n(\xi)] &\le \epsilon C((1-\xi)^{-1} + (\log(1-\xi))^2)\ .
\end{align}
Therefore,  for all $n> n_\epsilon$.
\begin{eqnarray*}
\Ee\left[ \int_0^{\xi_*} |\psi_n(\xi)- \phi_n(\xi)|{\rm d}\xi\right]  &=&   \int_0^{\xi_*}\Ee\left[ |\psi_n(\xi)- \phi_n(\xi)|\right] {\rm d}\xi\\
&\leq&   \int_0^{\xi_*}\left(\Ee\left[ |\psi_n(\xi)- \phi_n(\xi)|^2\right] \right)^{1/2}{\rm d}\xi\\
&\leq& \left(\epsilon C\left((1-\xi_*)^{-1} + (\log(1-\xi_*))^2\right)\right)^{1/2}\,.
\end{eqnarray*}
Since $\epsilon>0$ is arbitrary, this proves that 
$$
\limsup_{n\to\infty} \Ee\left[  \int_0^1  |\psi_n(\xi)- \phi_n(\xi)|{\rm d}\xi \right]  \leq  \int_{\xi_*}^1 \phi(\xi){\rm d}\xi
$$for all $0 < \xi_* < 1$. However, $\phi$ is integrable, we can choose $\xi_*$ to make this arbitrarily small. Thus, 
$$
\lim_{n\to\infty} \Ee\left[  \int_0^1  |\psi_n(\xi)- \phi_n(\xi)|{\rm d}\xi \right]  = 0\ .
$$
The main assertion now follows from \eqref{Mark}. 
\end{proof}

\begin{proof}[Proof of Theorem~\ref{mainX}]
By the triangle inequality,
$$W_1(\nu, \mu_n) \leq   W_1(\nu,\nu_n) + W_1(\nu_n,\rho_n) + W_1(\rho_n,\nu_n)  \ .$$
Now applying  Lemma~\ref{ceilfl}, Lemma~\ref{detlem} and Lemma~\ref{detlm2}  yields the result  yields \eqref{fk95}.
Then since 
$$
\lim_{n\to \infty}\mu_n([0,x]) = \lim_{n\to\infty}\int_0^1  1_{[0,x]}(\psi_n(\xi){\rm d}\xi  = \lim_{n\to\infty} \int_0^1 1_{[0,x]}(\phi(\xi){\rm d}\xi  = \phi^{-1}(x)\ ,
$$
the cumulative distribution function of the limiting empirical measure is $\phi^{-1}(x)$ and hence limiting empirical measure has the density 
\begin{align}
  \label{eq:28}
   g(x) &= \frac{d}{dx} \phi^{-1}(x) = \frac{1}{\phi'\left( \phi^{-1}(x)\right)}\,.
\end{align}
\end{proof}

\subsection{ Detailed  chaoticity of the equilibrium sequence}
\label{subsec:strongchaos}

As our first application of Theorem~\ref{mainX},  we identify the
limiting equilibrium density  $f_\alpha$, and  prove the 
  detailed $(\alpha,f_\alpha)$-chaoticity of the equilibrium
sequence: 

\begin{proof}[Proof of  Theorem~\ref{main}]
By Lemma~\ref{param}, the sequence of uniform probability measures on
$\SEnes$ are obtained by averaging over permutations the push-forwards
under the map $T_n$ described there of the flat Dirichlet measure on
the standard simplices of the same dimension. In this case we have at
fixed $n$ that $w_j = 1/n$ for all $j$, and $w(t) = 1$.  By
Remark~\ref{equil} , the sequence of flat Dirichlet measures on the
standard simplices is $w$-chaotic in the sense of
Definition~\ref{s1chaos} for $w =1$. 
For $w(t) =1$ for all $t$, $\phi(\xi)$ is given by \eqref{eq:20.0}, and
this identifies the limiting density $f_\alpha$.  

We next show that detailed  $(\alpha,f_\alpha)$ chaoticity holds for  the
sequence.  The  gap length  $\zeta_{x,n}= 
  x_{(j+1),n}- x_{j,n}- \alpha/(n-1)$ satisfies
  $$
 \frac{n-1}{\alpha}
 \zeta_{x,n}=\frac{2-\alpha}{2\alpha}\frac{(n-1)\tilde{z}_j}{1-
   \frac{j-1}{n}} \qquad{\rm where}\qquad j = \lfloor
 n\phi^{-1}(x)\rfloor\ . 
  $$
  Each $\tilde{z}_j$ has a ${\rm Beta}(1,n-1)$ distribution,  and
  hence the probability density for $(n-1)\tilde{z}_j$ is
  $\frac{n}{n-1}(1-z/(n-1))^{n-1}$ which converges to $e^{-z}$ as
  $n\to\infty$. Therefore
  \begin{align}
    \label{eq:79gapdistAB}
    \lim_{n\to\infty}\Pe[ (n-1) \zeta_{x,n}/\alpha > r]
    \rightarrow e^{-\frac{2\alpha(1-\phi^{-1}(x))}{2-\alpha} r}\,.
\end{align}
and by \eqref{fk47V}, this is equivalent to \eqref{eq:79gapdistA}. with $g = f_\alpha$.

The rate information is easily extracted from the Lemmas since all but
Lemma~\ref{detlem} give rates.
However, one can extract a rate from the proof of
Lemma~\ref{detlem} by considering the last two displayed inequalities
in the proof.  The details are left to the reader.  
\end{proof}

\subsection{Chaotic sequences by non-flat Dirichlet measures}
\label{sec:chaoticA}

The construction just provided leads to other chaotic  sequences of
probability measures on $\SEnes$:  Instead of pushing forward the flat
Dirichlet measure on $S_1$, we can push forward   
more general Dirichlet distributions, and as we show in this section
this leads to the construction of $(\alpha,g)$-chaotic sequences for
all probability densities $g$ on $\R_+$ that satisfy \eqref{fk8b} and
\eqref{fk8c}.  However, except in the case of the flat Dirichlet
measures, these sequences will not satisfy the detailed chaoticity. 

 Let $w = (w_1,\dots,w_n)$ be a probability measure on $\{1,\dots,
 n\}$. That is, $w_j \geq 0$ for all $j$ and $\sum_{j=1}^nw_j =1$.  
Then
\begin{equation}\label{dir1}
h_w(\tz_1,\dots\tz_n) = \left(\prod_{j=1}^n
  \Gamma(nw_j)\right)^{-1}\Gamma(n)\prod_{j=1}^n  \tz_j^{nw_j -1} \,.
\end{equation}
 is the density for a Dirichlet distribution on $S_1$ with
concentration parameters $w$. Random variables
$(\tilde{z}_1,...,\tilde{z}_n)$ with this distribution satisfy
\begin{align}
\label{eq:15b}
\Ee[\tz_j]&= w_j\,, \nonumber\\
\mathrm{Var}[\tz_i] &=  \frac{w_j(1-w_j)}{n+1}\,,  \\
\mathrm{Cov}[\tz_i,\tz_j] &=  -\frac{w_iw_j}{n+1}\,.  \nonumber
\end{align}

Now let $w(x)$ be a continuous probability density on $[0,1]$, and
suppose that for each $n$, we produce $(w_1,\dots,w_n)$ by taking
$w_j$ to be the mass  
assigned by $w(x){\rm d}x$ to the $j$th interval on in the uniform
partition of $[0,1]$.   Let $\tau_n$ denote the Dirichlet distribution
on the standard simplex in $n$ dimensions with the distribution given
by \eqref{dir1} and this choice of $(w_1,\dots,w_n)$. Then \eqref{w5}
holds on account of the continuity of $w$, and \eqref{w4} holds with
$C = \max_{\xi\in [0,1]}\{w(\xi)\}$ which is finite by the continuity
of $w$. Thus $\{\tau_n\}$ is a { $w$}-pre-chaotic sequence.

Now fix $\alpha\in ]0,2[$, and a sequence $\alpha_n \subset [0,2]$
with $\alpha_n \to \alpha$. Let $\hat{\tau}_n$ denote the probability
measure on $\SEnes$ obtained by pushing forward $\tau_n$ under the map
specified in Lemma~\ref{param} at the value $\alpha_n$, and then
averaging over permutations.  
By Theorem~\ref{mainX},  $\{\hat{\tau}_n\}$ is $(\alpha,g)$ chaotic where 
\begin{equation}\label{fk41}
g(x) = \frac{1}{\phi'(\phi^{-1}(x))}  \quad{\rm and}\quad  \phi(\xi) = (1-\alpha/2)\int_0^\xi \frac{w(t)}{1- t}{\rm d}t + \alpha \xi\ .
\end{equation}
Provided $\int_0^1(1-t)^{-1}w(t){\rm d}t = \infty$, $\phi$ increases strictly from $0$ at $\xi =0$ to $\infty$ at $\xi = 1$, and in fact, $\phi'(\xi) \geq \alpha$ for all $\xi$.   

 As before, let $\nu_n$ denote the push-forward of
  the uniform probability measure on $[0,1]$ under $\phi_n$, and let
  $\nu$ be defined in the same way in terms of $\phi$, so that $\nu =
  g(\xi){\rm d}\xi$ where $g(\xi) =1/\phi'(\phi^{-1}(\xi))$.  Thus we have: 

\begin{thm}\label{chacon} Let $w$ be a continuous probability density
  on $[0,1]$.  For each $n$ and each $1 \leq j \leq n$, define $w_j =
  \int_{(j-1)/n}^{j/n}w(\xi){\rm d}\xi$. Let the energies $E_n$ be
  chosen so that $\alpha_n \to \alpha$.  Equip $\SEnes$ with the
  probability measure   
that is the push-forward under $T_n$ (See Lemma~\ref{param}) of the
Dirichlet measure specified in \eqref{dir1} using these weights,
averaged under permutations.  Then this sequence is
$(\alpha,g)$-chaotic where $g$ is given by \eqref{fk41}. 
\end{thm}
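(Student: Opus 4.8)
The plan is to derive Theorem~\ref{chacon} as an immediate consequence of Theorem~\ref{mainX}. The latter already turns $w$-pre-chaoticity of a sequence $\{\tau_n\}$ on $S_1$ into $(\alpha,g)$-chaoticity of the symmetrized push-forwards $\{\hat\tau_n\}$, with the limiting density $g$ given exactly by \eqref{fk46} and \eqref{mainX2}, i.e. by \eqref{fk41}. Hence the whole task reduces to verifying that the Dirichlet measures $\tau_n$ of \eqref{dir1}, built from the weights $w_j=\int_{(j-1)/n}^{j/n}w(\xi)\,{\rm d}\xi$, form a $w$-pre-chaotic sequence in the sense of Definition~\ref{s1chaos}; that is, to checking the mean identity \eqref{wa5}--\eqref{wb5} and the variance/covariance estimates \eqref{w4}.

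First I would record the Dirichlet moments from \eqref{eq:15b}: under $\tau_n$ one has $\Eetn[\tz_j]=w_j$, $\mathrm{Var}[\tz_j]=w_j(1-w_j)/(n+1)$ and $\mathrm{Cov}[\tz_i,\tz_j]=-w_iw_j/(n+1)$. Since $\Eetn[\tz_j]=w_j=\int_{(j-1)/n}^{j/n}w$, the integral mean value theorem gives $n\Eetn[\tz_j]=w(\xi_j)$ for some $\xi_j\in[(j-1)/n,j/n]$, so that writing $r(j/n):=w(\xi_j)-w(j/n)$ reproduces \eqref{wa5}. Because $w$ is continuous on the compact interval $[0,1]$ it is uniformly continuous, and with $\omega$ its modulus of continuity one has $|r(j/n)|\le\omega(1/n)\to0$ uniformly in $j$; this is even stronger than the requirement \eqref{wb5}, which asks for smallness only on the range $j<n\xi_*$. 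Also $\int_0^1 w_n(\xi)\,{\rm d}\xi=\sum_k\Eetn[\tz_k]=\sum_k w_k=1$, so each $w_n$ is a genuine probability density.

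Next I would dispose of \eqref{w4}. Set $C:=\max_{\xi\in[0,1]}w(\xi)$, finite by continuity; then $w_j\le C/n$ for every $j$, so the moment formulas give $\mathrm{Var}[\tz_j]\le w_j/(n+1)\le C/(n(n+1))$ and $|\mathrm{Cov}[\tz_i,\tz_j]|\le w_iw_j/(n+1)\le C^2/(n^2(n+1))$. For any fixed $\epsilon>0$ these are bounded by $C\epsilon/n$ and $C\epsilon/n^2$ respectively as soon as $n$ is large enough (depending on $\epsilon$), which is exactly \eqref{w4}; note that here a single constant works for all indices $j,k$, not merely for $j,k<n\xi_*$. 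Combined with the previous paragraph this shows $\{\tau_n\}$ is $w$-pre-chaotic.

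Finally I would invoke Theorem~\ref{mainX}: with $\alpha_n\to\alpha\in\,]0,2[$ and the $w$-pre-chaotic sequence just constructed, the averaged push-forwards $\{\hat\tau_n\}$ satisfy $\lim_n W_1(\mu_n,g\,{\rm d}x)=0$ with $g$ as in \eqref{fk41}, which is precisely $(\alpha,g)$-chaoticity. A small sanity check worth including is that $\alpha>0$ forces $\phi'(\xi)=(1-\alpha/2)w(\xi)/(1-\xi)+\alpha\ge\alpha>0$, so $\phi$ is strictly increasing and invertible on its range and $g=1/\phi'(\phi^{-1})$ is well defined whether or not $\int_0^1(1-t)^{-1}w(t)\,{\rm d}t$ converges. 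I do not anticipate any genuine obstacle: all the analytic work, namely the passage from pre-chaoticity on $S_1$ to chaos on $\SEnes$, is carried by Theorem~\ref{mainX}, and the remaining verifications are routine consequences of the boundedness and uniform continuity of $w$ together with the explicit Dirichlet moments. If anything needs care it is purely bookkeeping --- ensuring the residual $r$ is uniformly small across the full range of $j$ --- and uniform continuity on the compact interval settles this at once.
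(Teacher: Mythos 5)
Your proposal is correct and takes essentially the same route as the paper: the paper's (inline) proof likewise reads off the Dirichlet moments \eqref{eq:15b}, observes that the mean condition \eqref{wa5}--\eqref{wb5} follows from the continuity of $w$ and that \eqref{w4} holds with $C=\max_{\xi\in[0,1]}w(\xi)$, concludes that $\{\tau_n\}$ is $w$-pre-chaotic, and then invokes Theorem~\ref{mainX}. Your mean-value-theorem treatment of the remainder $r$ and the explicit variance/covariance bounds simply make precise what the paper asserts without detail.
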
 

However, The chaotic sequences obtained in this manner do not satisfy
detailed chaos. In this construction, for each $n$ and
$x_{j(n)}$, Each $\tilde{z}_{j(x)}$ has a ${\rm Beta}(nw_{j(x)}
,n-nw_{j(x)})$ distribution,  and hence the probability density for
$(n-1)\tilde{z}_{j(x)}$ converges to a non-exponential Gamma
distribution unless $w_j \to 1/n$. To obtain sequece that satisfies
detailed chaos, we must push forward a different class of pre-chaotic
measures on the standard simplices. In the next subsection, we
describe one way of doing this.

\begin{remark}
  \label{rem:37}
  If the concentration parameters in the Dirichlet distribution are
  multiplied by a common factor   $K$, and hence eq.~(\ref{dir1}) is
  replaced by
  \begin{equation}\label{dir1b}
    h^K_w(\tz_1,\dots\tz_n) = \left(\prod_{j=1}^n
      \Gamma(K nw_j)\right)^{-1}\Gamma(K n)\prod_{j=1}^n  \tz_j^{K nw_j -1} \,,
\end{equation}
we still have a Dirichlet distribution with the same expected
  values but with the variance covariance multiplied by the factor
  $1/K$. Equation~(\ref{eq:15b}) becomes
  \begin{align}
    \label{eq:15bK}
    \Ee[\tz_j]&= w_j\,, \nonumber\\
    \mathrm{Var}[\tz_i] &=  \frac{w_j(1-w_j)}{K(n+1)}\,,  \\
    \mathrm{Cov}[\tz_i,\tz_j] &=  -\frac{w_iw_j}{K(n+1)}\,.  \nonumber
  \end{align}
  Therefore all estimates leading to the proof of Theorem~\ref{chacon}
  are still valid, and therefore these measures are
  $(\alpha,g)$-chaotic as well. But increasing $K$ implies that the
  random variables $\tz_i$ become more concentrated around their mean
  $w_i$. And while this does not change the limiting density $g$, it
  changes the gap distribution for all finite $n$, and we will see in
  Section~\ref{sec:kac} that this is a fundamental difference for the
  limiting dynamics of the particle system. 
\end{remark}

\subsection{Detailed chaoticity via order statistics}   

The construction that we now give uses another probability density $h(\eta)$ on $[0,1]$ that  has to do with the excess energy distribution.  

The distribution of the random  points $x_j\in \R_+$ is determined by the
distribution of empty intervals $]0,x_1[$, $]x_1+\epsilon, x_2[$, or equivalently, as we have seen in
 Lemma~\ref{param},  by the random variables $z_j$ in~(\ref{eq:2}). These specify a random partition
$$\{[0,a_1], (a_1,a_2],\dots,(a_{n-1}, 1]\}$$
of $[0,1]$ into $n$ parts with $a_j - a_{j-1} = \tz_j$ and $a_0 = 0$.
This random partition is closely related to a partition of the excess
energy. Recall that the fraction of the total energy that is excess
energy is $(1- \alpha/2)$.   
Given a probability density $g(x)$ on $\R_+$ that satisfies \eqref{fk44}, 
$(1- \alpha g(x))$ represents probability that the interval $[x,x+{\rm
  d}x]$ is unoccupied.  Opening up a gap in  $[x,x+{\rm d}x]$ would raise the energy
of all the particles with energy higher than $x$ by ${\rm d}x$. Thus
this would make a contribution of 
$$(1-\alpha g(x))(1- G(x))$$ to the total excess energy, where $G$ is
defined as in Theorem~\ref{minX}. Therefore,
the fraction of the excess energy that can be ascribed to gaps in
$[x,x+{\rm d}x]$ is
\begin{align}
\label{eq:hdx}
  h(x){\rm d}x = \frac{1}{1-\alpha/2}(1- \alpha g(x)) (1 - G(x)){\rm
  d}x\ .
\end{align}
One readily checks that $h(x)$ is indeed a probability density. Let $H(x)$ denote its cumulative distribution function.   Out of $G$ and $H$ we define two maps from $[0,1]$ to $[0,1]$, namely $G\circ H^{-1}$ and $H\circ G^{-1}$.
We may use these two maps to push forward the uniform distribution on $[0,1]$ onto $[0,1]$ itself, producing two new probability measures on $[0,1]$. 

Define
\begin{equation}\label{psiform}
\psi(\eta) =  \frac{g(H^{-1}(\eta))}{h(H^{-1}(\eta))}\ ,
\end{equation}
and note that the cumulative distribution function of $\psi$ is $\Psi(\eta) = G(H^{-1}(\eta))$.   Likewise define
\begin{equation}\label{wform}
w(\xi) =  \frac{h(G^{-1}(\xi))}{g(G^{-1}(\xi))}
\end{equation}
and note that the cumulative distribution function of $w$ is 
$H(G^{-1}(\eta))$.   Also, note that
\begin{equation}\label{wpsiform}
w(\xi) = \frac{1}{\psi(\Psi^{-1}(\eta))}\,.
\end{equation}

From \eqref{psiform}, and the definition of $h$ in terms of $g$, 
\begin{equation}\label{psiform2}
\psi(\eta) =  \frac{1}{1-\alpha/2}\left(\frac{1}{g(G^{-1}(\Psi(\eta)))} - \alpha\right)(1 - \Psi(\eta))\ .
\end{equation}
When there is an $\epsilon> 0$ such that $g(x) \leq (\alpha+ \epsilon)^{-1}$,
\begin{equation}\label{psilower}
\psi(\eta) \geq \frac{\epsilon}{1-\alpha/2}(1 - \Psi(\eta))\ ,
\end{equation}
and this provides a lower bound on $\psi$ on any interval $[0,\eta_*]$, for any $\eta_* < 1$. 

Likewise, we may recover $w(\xi)$ as in equation~(\ref{fk45})  from the formula for $h$.
When there is an $\epsilon>0$ so that $1/g(x) \geq \alpha +\epsilon$ for all $x$,
$$w(x) \geq \epsilon (1-\xi)\ ,$$
 which provides a uniform  lower bound on $w(\xi)$ on $[0,\xi_*]$ for any $\xi_* < 1$.
 We see that $w$ is the probability density on $[0,1]$  associated to
 $g$ though  Theorem~\ref{minX}.

Let $\Phi_j$, $j=1,2$ be the cumulative distribution functions of two strictly positive probability densities $\phi_j$, $j=1,2$ respectively, on intervals $[a_j,b_j]$, $j=1,2$.  Then  $\Phi_1^{-1}\circ \Phi_2: [a_2,b_2] \to [a_1,b_1]$, and as is readily checked, 
$\Phi_1^{-1}\circ \Phi_2$ pushes $\phi_2(x){\rm d}x$ onto $\phi_1(x){\rm d}x$. 
In particular, if $\phi_2(x){\rm d}x$ is the uniform distribution on $[0,1]$; i.e.; $\Phi_2(x) = x$ for all $x\in [0,1]$, $\Phi_1^{-1}$ pushes forward the uniform distribution on $[0,1]$ onto $\phi_1(x){\rm d}x$ on $[a_1,b_1]$. That is, for all continuous functions $\chi$ on $[a_1,b_1]$,
$$
\int_{a_1}^{b_1} \chi(x) \phi_1(x){\rm d}x = \int_0^1 \chi(\Phi_1^{-1}(y)){\rm d}y\ .
$$
In particular if $\xi$ is a random variable that is uniformly distributed on $[0,1]$, $\Phi_1^{-1}(\xi)$ is a random variable with the law $\phi_1(x){\rm d}x$.  Therefore,
if $\xi_1,\dots,\xi_{n-1}$  are the order statistics of $n-1$ i.i.d
uniformly distributed  random variables,  $\Phi^{-1}_1(\xi_1),\dots, \Phi^{-1}_1(\xi_{n-1})$ are the order statistics of $n-1$ independent samples from the law $\phi_1(x){\rm d}x$.

\begin{lm}\label{push}
Let $F(\eta)$ be the cumulative distribution function of a continuous  probability density
$f(\eta)$  on $[0,1]$ that is uniformly positive on $[0,\eta_*]$ for all $0 < \eta_* < 1$; i.e., for some $a>0$,  $f(\eta) \geq a$ for $0 \leq \eta \leq \eta_*$. 
Let $\xi_1,\dots,\xi_{n-1}$ be the order statistics of $n-1$ i.i.d
uniformly distributed  random variables, and set $\xi_0=0$,
$\xi_n=1$ and $\lambda_j=\xi_j-\xi_{j-1}$. Moreover let $\eta_j=
F^{-1}(\xi_j)$ and set 
$\tz_j=\eta_j-\eta_{j-1}$.  Then for $\eta_j,\eta_k \leq \eta_*$ 
\begin{align}
  \label{varvar1}
  \Ee[  \tz_j ] &= \Ee\left[\frac{\lambda_j}{f (\eta_j)}\right] + \frac1n r_1\,,\\
  \label{varvar2}
  {\rm Var}[\tz_j] &= \Ee\left[\left(\frac{\lambda_j}{f (\eta_j)}\right)^2\right] + \frac{1}{n^2}r_2\,,\\
  \label{varvar3}
  {\rm Cov}[\tz_j, \tz_{k}] &=    \frac{1}{n^2}r_3\,,                
\end{align}
where  $r_1, r_2,$ and $r_3$ converge to zero as $n\rightarrow\infty$
with a rate depending on the modulus of continuity of $f$ and the lower bound on $f$ on $[0,\eta_*]$. If
$f$ is Lipschitz continuous and uniformly bounded below on all of $[0,1]$, then $|r_i|< C/n$, with $C$ depending
on $f$. 
\end{lm}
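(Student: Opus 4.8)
The plan is to reduce everything to the moments of the uniform spacings $\lambda_j$, which are exactly flat Dirichlet spacings: since $\xi_1,\dots,\xi_{n-1}$ are the order statistics of $n-1$ i.i.d.\ uniforms, the vector $(\lambda_1,\dots,\lambda_n)$ has the flat Dirichlet law on $S_1$ (cf.\ Remark~\ref{equil}), so by \eqref{eq:15} one has $\Ee[\lambda_j]=1/n$, $\mathrm{Var}[\lambda_j]=(n-1)/(n^2(n+1))$ and $\mathrm{Cov}[\lambda_j,\lambda_k]=-1/(n^2(n+1))$, together with $\Ee[\lambda_j^2]=2/(n(n+1))$ and the exponential tail $\Pe[\lambda_j>t]=(1-t)^{n-1}\le e^{-(n-1)t}$. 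The device that connects $\tz_j$ to $\lambda_j$ is the integral representation
\begin{equation*}
\tz_j = \eta_j-\eta_{j-1} = F^{-1}(\xi_j)-F^{-1}(\xi_{j-1}) = \int_{\xi_{j-1}}^{\xi_j}\frac{{\rm d}s}{f(F^{-1}(s))}\ ,
\end{equation*}
together with the decomposition $\tz_j = Y_j + R_j$, where $Y_j := \lambda_j/f(\eta_j)$ is obtained by freezing the integrand at the right endpoint $s=\xi_j$ and $R_j$ is the remainder.

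First I would establish the remainder bound. On the event $\{\eta_j\le\eta_*\}$ the hypothesis gives $f\ge a>0$ on the range of integration, so $F^{-1}$ is Lipschitz there with constant $1/a$, whence $\tz_j\le\lambda_j/a$. Writing $\omega$ for the modulus of continuity of $f$ on $[0,\eta_*]$, the integrand defining $R_j$ is bounded by $a^{-2}\omega(\tz_j)\le a^{-2}\omega(\lambda_j/a)$, so
\begin{equation*}
|R_j| \le a^{-2}\,\lambda_j\,\omega(\lambda_j/a)\ ,
\end{equation*}
which in the Lipschitz case reads $|R_j|\le C\lambda_j^2$.

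Next I would substitute $\tz_j=Y_j+R_j$ into the three quantities. For the mean, $\Ee[\tz_j]=\Ee[Y_j]+\Ee[R_j]$, and the bound above gives $\Ee[R_j]\le a^{-2}\Ee[\lambda_j\omega(\lambda_j/a)]$; splitting on $\{\lambda_j\le M/n\}$ (using $\Ee[\lambda_j]=1/n$ on the bulk and the exponential tail on the complement) yields $\Ee[R_j]=o(1/n)$, i.e.\ $r_1\to0$, while $|R_j|\le C\lambda_j^2$ gives $\Ee[R_j]\le C\Ee[\lambda_j^2]=O(1/n^2)$ and hence $|r_1|\le C/n$. For the second-order quantities one expands $\tz_j\tz_k=Y_jY_k+Y_jR_k+R_jY_k+R_jR_k$: the principal contribution comes from the $Y$'s, reducing $\mathrm{Var}[\tz_j]$ to $\Ee[Y_j^2]$ through $\Ee[\lambda_j^2]$, whereas for $j\neq k$ the leading $Y_jY_k$ term is governed by $\mathrm{Cov}[\lambda_j,\lambda_k]=O(1/n^3)$, so that no $1/n^2$ term survives off the diagonal and the covariance takes the form $r_3/n^2$ with $r_3\to0$. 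The cross and remainder terms are disposed of by Cauchy--Schwarz combined with the bound on $R_j$ and the second moment of $\lambda_j$, and are absorbed into $r_2/n^2$ and $r_3/n^2$.

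The hard part will be the \emph{uniform} control of the remainder contributions over all indices $j,k<n\xi_*$ from only the modulus of continuity and the lower bound of $f$ on $[0,\eta_*]$, and in particular the bookkeeping around the randomness of $\eta_j$: since $\xi_j$ concentrates at $j/n$ with fluctuations of order $n^{-1/2}$, one must work on the event $\{\eta_j,\eta_k\le\eta_*\}$ where $f\ge a$, and bound the contribution of the complementary event, whose probability is small for $j,k$ well below $nF(\eta_*)$. The rate at which $r_1,r_2,r_3\to0$ is then dictated by $\omega(1/n)$ via the estimate $\Ee[\lambda_j\omega(\lambda_j/a)]=o(1/n)$; the global Lipschitz-plus-uniform-lower-bound hypothesis removes the need for the restriction to $[0,\eta_*]$ and upgrades every estimate to the clean $C/n$ rate.
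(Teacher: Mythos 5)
Your proposal matches the paper's proof essentially step for step: the same integral representation $\tz_j=\int_{\xi_{j-1}}^{\xi_j}{\rm d}s/f(F^{-1}(s))$, the same device of freezing the integrand at the right endpoint to isolate $\lambda_j/f(\eta_j)$ plus a remainder, the same flat-Dirichlet moment formulas for the uniform spacings $\lambda_j$, and the same expansion of $\tz_j\tz_k$ to get the covariance bound. The only deviation is cosmetic: you dispose of the remainder's tail with the exact bound $\Pe[\lambda_j>t]=(1-t)^{n-1}$, whereas the paper uses Chebyshev with the cutoff $\delta=n^{-1/3}$; both give the stated $o(1/n)$ rate, and $O(1/n^2)$ in the Lipschitz case.
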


\begin{proof}
Related results for order statistics can be found {\em e.g.}
in~\cite{Ahsanullah_etal_2013}. The present result is preciely adapted
for our situation.  We know that $\lambda_j=\xi_j-\xi_{j-1}$ are
distributed as a flat Dirichlet distribution, and hence that
$\Ee[\lambda_i]=1/n$, and that 
$ {\rm Var}[\lambda_j] = 1/n^2 + \bigoh(1/n^3)$ and ${\rm
  Cov}[\lambda_j,\lambda_k] = -1/n^3+\bigoh(1/n^4)$.

First, since 
${\displaystyle ( F^{-1})'(t) = \frac{1}{\ f( F^{-1}(t))}}$,
\begin{align}\label{tzform}
  \tz_j &= \int_{\xi_{j-1}}^{\xi_j} \frac{1}{\ f( F^{-1}(s))}\,ds  = \lambda_j
         \frac{1}{f(\eta_j)} + u_j \,,
\end{align}
where
\begin{align}
  u_j&:=  \int_{\xi_{j-1}}^{\xi_j}
            \left(\frac{1}{f( F^{-1}(s))}-\frac{1}{f( F^{-1}(\xi_j))}\right)\,ds\,. 
\end{align}
If $f(s)$ is Lipschitz continuous and bounded below by $a>0$ for $F^{-1}(\xi_j)\leq \eta_*$, then  
$$
\left|\frac{1}{ f( F^{-1}(s))}-\frac{1}{ f( F^{-1}(\xi_j))}\right| \leq \frac {\lambda_j^2}{a^2}\ ,
$$
Therefore ${\displaystyle \Ee[u_j] \le C \Ee[\lambda_j^2]\le \frac{1}{a^2n^2} }$. Otherwise, if
$f$ is only continuous, there is a function $\omega(\delta)>0$, the
modulus of continuity,  with
$\lim_{\delta\rightarrow 0} \omega(\delta) =0$ such that
$$\sup_{|t_1- t_2|<\delta}| f(t_1)- f(t_2)| \le \omega(\delta)\ .$$
Then, for $\delta$ small enough,  and $\eta_j \leq \eta_*$ and $f\geq a>0$ on $[0,\eta_*]$,
\begin{align}
  \Ee[ |u_j|] &=   \Ee[ |u_j| \one_{\lambda_j<\delta}] +
                         \Ee[ |u_j| \one_{\lambda_j\ge\delta}]
                       \nonumber \\
              &\le \Ee[\lambda_j \omega(\delta)  \one_{\lambda_j<\delta} ]+ \frac1a \Pe[
    \lambda_j>\delta] \nonumber \\
  &\le \frac{1}{n} \omega(\delta) +\frac{ 1}{a} \frac{\Ee[\lambda_j^2]}{\delta^2}  \nonumber\\
  &\le \frac{1}{n} \omega(\delta) +\frac{ 1}{a} \frac{1}{n^2 \delta^2}   \nonumber
  \,.
\end{align}  
Choosing $\delta = n^{-1/3}$, we find,
$$
r_1 \leq  \frac{1}{n} \left(\omega(n^{-1/3}) +\frac{ 1}{a }n^{-1/3} \right)\ .
$$
The proof of (\ref{varvar2}), which we omit,  is very
similar.  To estimate the covariance we write
\begin{align}
  \label{eq:cov24}
  {\rm Cov}[\tz_j,\tz_k]
    &= \Ee[ \tz_j\tz_k] - \Ee[\tz_j]\Ee[\tz_k] \,, 
\end{align}
and
\begin{align}
  \label{eq:cov25}
  \tz_j\tz_k
  &= \int_{\xi_{j-1}}^{\xi_j} \int_{\xi_{k-1}}^{\xi_k}
                 \frac{1}{ f( F^{-1}(s))} \frac{1}{ f( F^{-1}(t))}\, {\rm d}s {\rm d}t 
  \nonumber \\
  & = \lambda_j \lambda_k  \frac{1}{ f(\eta_j) f(\eta_k)}  \nonumber
  \\
  &\qquad + \int_{\xi_{j-1}}^{\xi_j}
    \left(\frac{1}{ f( F^{-1}(s))}-\frac{1}{ f(\eta_j)}\right)\,{\rm d}s       \int_{\xi_{k-1}}^{\xi_k}\frac{1}{ f( F^{-1}(t))}\,{\rm d}t  \nonumber\\
    & \qquad+     
    \lambda_j \frac{1}{ f(\eta_j)}
   \int_{\xi_{k-1}}^{\xi_k} \left(\frac{1}{ f( F^{-1}(t))}-\frac{1}{ f(\eta_k)}\right)\,{\rm d}t\,.   
\end{align}
For the first term we note that $\Ee[\lambda_j \lambda_k] = n^{-2} +
{\rm Cov}{[\lambda_j, \lambda_k]}$, and by estimates like the ones
used to estimate $r_1$, we find that the remaining terms are
$o\left(n^{-2}\right)$, or even $\bigoh\left(n^{-3}\right)$ if $ f(s)$ is
Lipschitz. Hence computing the covariance in~(\ref{eq:cov24}) by
taking the expectation of ~(\ref{eq:cov25}) and using~(\ref{varvar1})
yields~(\ref{varvar3}). 
\end{proof}

For the following theorem, recall the constraints for $g$ as stated in
equation~(\ref{fk44}), and the definition of $h$ in (\ref{eq:hdx}),
and that $G$ and $H$ are their cumulative distribution functions.

\begin{thm}
  \label{thm:chaosexp}
  Let $\psi(\eta)$ be defined by \eqref{psiform}, and let
  $\eta_{j}, \, j=1,...,n-1$ be the order statitics of $n-1$
  i.i.d. random variables with distribution
  $\psi(\eta){\rm d}\eta$, and let
  $\eta_0=0$, $\eta_{n}=1$. With $\tz_i=\eta_{i}-\eta_{i-1},\; i=1,...,n$
  this induces a measure on the standard $n-1$ dimensional
  simplex $S_1$, whose push-forward to $\SEnes$ is 
  $(\alpha,g)$-chaotic. Let $x>0$, and let $]x_{(j)}, x_{(j+1)}[$ be
  the random interval that contains $x$. If the density
  $\psi(\eta)$ is continuous, then the gap length  $\zeta_{x,n}=
  x_{(j+1)}- x_{(j)}- \alpha/(n-1)$ satisfies

  \begin{align}
    \label{eq:79gapdist}
    \lim_{n\to\infty}\Pe[ (n-1) \zeta_{x,n}/\alpha > r]
    \rightarrow e^{-\frac{\alpha 
    g(x)}{1-\alpha g(x)} r}\,.
  \end{align}
\end{thm}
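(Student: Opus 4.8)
The plan is to derive both assertions of Theorem~\ref{thm:chaosexp} from machinery already in place: the chaos statement from Theorem~\ref{mainX} fed by the order-statistics estimates of Lemma~\ref{push}, and the gap law from the explicit gap identity used in the proof of Theorem~\ref{main}. Throughout I would apply Lemma~\ref{push} with $f=\psi$ and $F=\Psi$, where $\Psi(\eta)=G(H^{-1}(\eta))$ is the distribution function of $\psi$; the hypothesis of the lemma is met because the lower bound \eqref{psilower} shows that $\psi$ is bounded below by a positive constant on each $[0,\eta_*]$ with $\eta_*<1$.

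First I would show that the induced sequence $\{\tau_n\}$ on $S_1$ is $w$-pre-chaotic in the sense of Definition~\ref{s1chaos}, with $w$ the continuous density in \eqref{wform}. By \eqref{varvar1}, $\Ee[\tz_j]=\Ee[\lambda_j/\psi(\eta_j)]+r_1/n$; since $\lambda_j$ has mean $1/n$ and the uniform order statistic $\xi_j$ concentrates at $j/n$, continuity of $\psi\circ\Psi^{-1}$ gives $\Ee[\lambda_j/\psi(\eta_j)]=\tfrac1n\,(\psi(\Psi^{-1}(j/n)))^{-1}+o(1/n)$, where the $o(1/n)$ absorbs the covariance between $\lambda_j$ and $\psi(\eta_j)$. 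By \eqref{wpsiform} this equals $\tfrac1n w(j/n)+o(1/n)$, so $n\Ee[\tz_j]=w(j/n)+r(j/n)$ with $r\to0$; uniform smallness of $r$ on $[0,\xi_*]$ (condition \eqref{wb5}) follows from uniform continuity of $\psi\circ\Psi^{-1}$ there. The variance and covariance requirements \eqref{w4} follow directly from \eqref{varvar2}--\eqref{varvar3}, which already yield $\mathrm{Var}[\tz_j]=\bigoh(1/n^2)$ and $\mathrm{Cov}[\tz_j,\tz_k]=o(1/n^2)$. Theorem~\ref{mainX} then gives that the push-forward is $(\alpha,g')$-chaotic for the $g'$ determined from $w$ by \eqref{fk46}--\eqref{mainX2}; and since the $w$ of \eqref{wform} is precisely the density associated to $g$ through Theorem~\ref{minX}, the inversion \eqref{fk47} forces $g'=g$, proving the chaos claim.

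For the gap law I would reuse the identity from the proof of Theorem~\ref{main}, which depends only on the parameterization $T_n$ and not on the law of the $\tz_j$: with $j=\lfloor n\phi^{-1}(x)\rfloor=\lfloor nG(x)\rfloor$,
\begin{equation*}
\frac{n-1}{\alpha}\zeta_{x,n}=\frac{2-\alpha}{2\alpha}\,\frac{(n-1)\tz_j}{1-\frac{j-1}{n}}\,.
\end{equation*}
The key step is to show that $(n-1)\tz_j$ converges in distribution to an exponential law. Writing \eqref{tzform} as $(n-1)\tz_j=(n-1)\lambda_j/\psi(\eta_j)+(n-1)u_j$, the estimates of Lemma~\ref{push} give $(n-1)\Ee[|u_j|]\to0$, so the error term vanishes in probability; the uniform spacing $\lambda_j\sim\mathrm{Beta}(1,n-1)$ gives $(n-1)\lambda_j\to\mathrm{Exp}(1)$ in distribution; and $\xi_j\to G(x)$ in probability forces $\eta_j=\Psi^{-1}(\xi_j)\to\Psi^{-1}(G(x))=H(x)=:\eta^\ast$, whence $\psi(\eta_j)\to\psi(\eta^\ast)=g(x)/h(x)$ by continuity. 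Slutsky's theorem then delivers $(n-1)\tz_j\to\mathrm{Exp}(g(x)/h(x))$, i.e. $\Pe[(n-1)\tz_j>s]\to e^{-(g(x)/h(x))s}$.

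Finally, combining the two steps and using $1-\frac{j-1}{n}\to1-G(x)$, I would obtain
\begin{equation*}
\lim_{n\to\infty}\Pe[(n-1)\zeta_{x,n}/\alpha>r]=\exp\!\left(-\frac{g(x)}{h(x)}\cdot\frac{2\alpha}{2-\alpha}\,(1-G(x))\,r\right),
\end{equation*}
and substituting $h(x)=\tfrac{2}{2-\alpha}(1-\alpha g(x))(1-G(x))$ from \eqref{eq:hdx} collapses the rate to $\alpha g(x)/(1-\alpha g(x))$, which is exactly \eqref{eq:79gapdist}. The main obstacle is the joint asymptotics in the gap step: $\lambda_j$ and the location $\eta_j$ are not independent, but because the rescaled spacing $(n-1)\lambda_j$ is asymptotically exponential while $\psi(\eta_j)$ converges to a deterministic constant, Slutsky's theorem sidesteps the dependence cleanly; the only remaining care is to check that the $o(1/n)$ remainders in Lemma~\ref{push} survive multiplication by $n-1$, which they do by the choice $\delta=n^{-1/3}$ made there.
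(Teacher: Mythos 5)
Your proposal is correct and follows essentially the same route as the paper: Lemma~\ref{push} applied with $f=\psi$, combined with the identity \eqref{wpsiform} rewriting $1/\psi(\eta_j)$ as $w(\xi_j)$, yields $w$-pre-chaoticity, Theorem~\ref{mainX} (with Theorem~\ref{minX} identifying the $w$ of \eqref{wform} as the one associated to $g$) gives the $(\alpha,g)$-chaos claim, and the gap law rests on \eqref{tzform} through the spacing identity $(n-1)\zeta_{x,n}/\alpha=\frac{2-\alpha}{2\alpha}\,(n-1)\tz_j\,\bigl(1-\frac{j-1}{n}\bigr)^{-1}$. The only difference is expository: where the paper merely asserts that the gap statement ``follows from \eqref{tzform}'', you correctly supply the implicit details --- the $\mathrm{Beta}(1,n-1)\to\mathrm{Exp}(1)$ limit of the rescaled uniform spacing, the vanishing of $(n-1)u_j$ via the choice $\delta=n^{-1/3}$, the Slutsky step handling the dependence between $\lambda_j$ and $\eta_j$, and the algebraic collapse of the rate to $\alpha g(x)/(1-\alpha g(x))$ using \eqref{eq:hdx}.
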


\begin{proof}  By Lemma~\ref{push} applied with $f =\psi$, together
  with \eqref{wpsiform}, which implies that 
$$
 \frac{1}{ \psi(\eta_j)}  = w(\xi_j)\ .
$$ This permits us to rewrite \eqref{varvar1},  \eqref{varvar2} and
\eqref{varvar3}  in terms of $w$, we see that  
  the sequence of laws of
$(\tilde z_1, \dots,\tilde z_n)$, averaged over permutations, are  $w$
pre-chaotic family on $S_1$. Here $w$ is given as in (\ref{fk45}).
Then  by Theorem~\ref{mainX}, if we fix a sequence of energies
$\{E_n\}$ with $\alpha_n= \epsilon n (n-1)/E_n \rightarrow \alpha$, 
and define the maps $T_n$ in terms of $\alpha_n$ as in
Lemma~\ref{Tndef}, the sequence of their push forwards onto
$\SEnes$, averaged over permutations, is $(\alpha, g)$-chaotic. 
The statement about the gap distributions then follows from
\eqref{tzform} with $f(\eta) =
\frac{g(H^{-1}(\eta))}{h(H^{-1}(\eta))}$ so that 
$$f(\eta_j) = \frac{g(G^{-1}(\xi_j))}{h(G^{-1}(\xi_j))}\ .$$

\end{proof}

It follows from \eqref{wform} that $w(\xi){\rm d}\xi$ results from
pushing the excess energy distribution forward onto $[0,1]$ using  
the distribution function $G$.  That is $h(x) = w(G(x))g(x)$.  In
equilibrium,  this excess energy density is 
uniform; i.e., $w(\xi) =1$,   
and the approach to equilibrium for our process can be thought of as
the approach of the excess energy distribution to uniform. 
This is illustrated in Figure~\ref{fig:excess}, which shows the
cumulative excess energy for a couple of different densities $g(x)$,
and the excess energy per particle as a function of the position $x$
of a particle, $(1-\alpha g(x)) G(x)/g(x)$.

\begin{figure}[!h]
  \centering
  \includegraphics[width=0.5\textwidth]{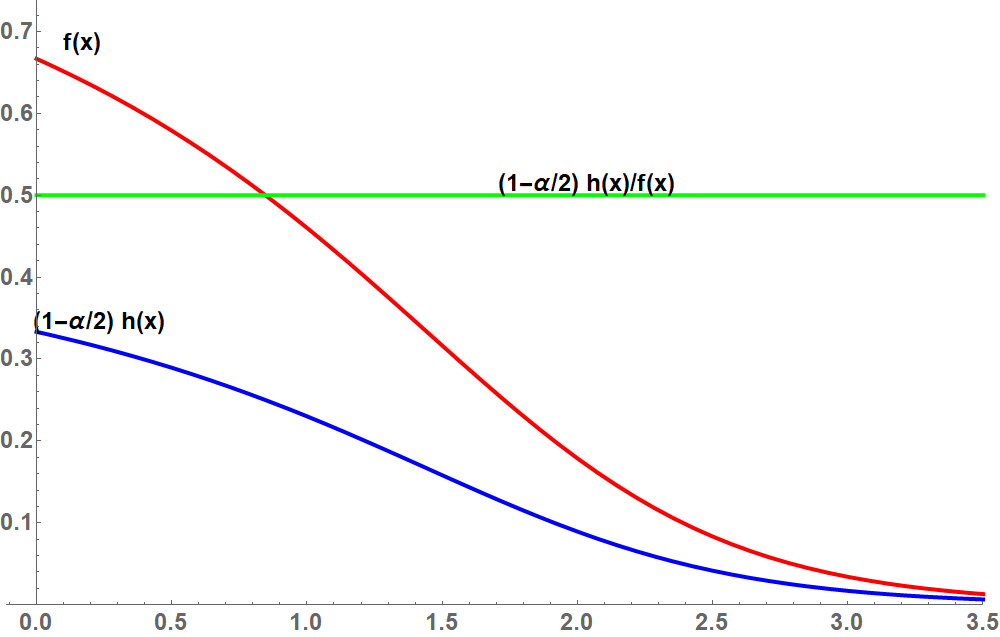}\;%
   \includegraphics[width=0.5\textwidth]{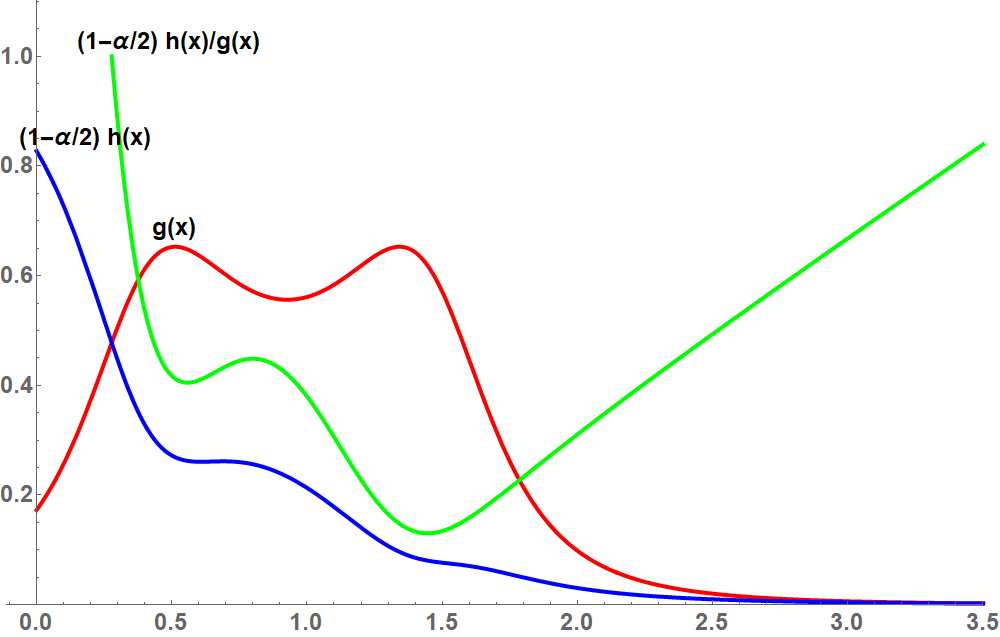}\;%
  \caption{\small The graphs show the particle density $g(x)$ (red),
    the corresponding  excess energy distribution $w(x)$ corresponding
    to $\alpha=1$ (blue), and
    the excess energy per particle, $w(x)/g(x)$ (green). To the left,
    the density is the equilibrium density $f_{\alpha}(x)$ as derived in Section
    2, and to the right $g(x) = c_1 /((1 + (c_2 x  - 1)^2) (1 + (c_2 x
    - 4)^2))$ with the constants $c_1$ and $c_2$ chosen to make $g(x)$
  a probability density with mean~$1$. We see that the density
  $f_{\alpha}(x)$ is equivalent to distributing the excess energy
  uniformly among the particles.}    
  \label{fig:excess}
\end{figure}

Lemma~\ref{push} and Theorem~\ref{thm:chaosexp} provide a means of
sampling the empirical distributions $\mu_n$: Let $\xi_{(i)},...,\xi_{(n-1)}$ be the order statistics of
  $n-1$ independent samples from the uniform distribution on $[0,1]$
 and then form
$\eta_1,\dots,\eta_n$ through  $\eta_j = \Psi^{-1}(\xi_{(j)})$, which
then gives us the order statistics of $n-1$ independent samples from
$\psi(\eta){\rm d}\eta$. Then with $\eta_0 =0$ and $\eta_n = 1$, we
define $\tz_j = \eta_j - \eta_{j-1}$, from which we recover a sample
of $(x_1,\dots,x_n)$.  
We illustrate this way of sampling the empirical distribution in
Figure~\ref{fig:chsampl1}, where $g$ is as in Figure~\ref{fig:excess},
but with $\alpha=1.5$. Here the density is close to the maximal
density $2/3$, which leads to slow convergence of the empirical
measures.

\begin{remark}
    The construction of the uniform measure on the simplex $S_1$ by
    independent i.i.d uniform random variables on the unit interval
   is not new here, and can be found for example
   in~\cite{VershikYakubovich2003}, where the authors study the
   asymptotics of partitions of a number  $n$ into a sum of $m$
   integers. 
  \end{remark}

  \begin{remark}
This construction also illustrates the difference between
$(\alpha,g)$-chaoticity and detailed $(\alpha,g)$-chaoticity, and why
detailed $(\alpha,g)$ chaos is difficult to express in terms of
marginal distributions in the way Kac defined chaos (see
eq.~(\ref{eq:kacchaos})). Assuming that $g$ 
is continuous as before, we have a one to
one map between a point $(x_1,...,x_n)\in\SEne^*$  and points
$\xi_j = \Psi(\eta_j)$ where $\eta_j=\sum_{i=1}^j \tilde{z}_i \in
[0,1]$, with $\eta_n=1$. After symmetrization the  push forward of a measure
$\sigma_{n,\epsilon}$ on $\SEne$ by this map  gives rise to a  symmetric measure
$\lambda_n$ on $[0,1]^n$, and one could compute the marginal
distributions of $\sigma_n$ and  $\lambda_n$ and see that if the sequence
$\{\sigma_n\}$ is $(\alpha,g)$ chaotic, then $\{\lambda_n\}$ is
chaotic with respect to the uniform measure on $[0,1]$, and the other
way around, and the fact that $(\alpha,g)$-chaoticity of the sequence
$\{\sigma_{n,\epsilon}\}$ corresponds to the usual notion of chaos $\{
\lambda_n\}$ is encoded in the maps $T_n$. Detailed chaos can be
expressed as saying that at the scale $1/n$, the points $x_j$ after
symmetrization behave as a Poisson point process: take any point
$\bar{\xi}\in ]0,1[$, and $a,b>0$. Then number of points $\xi_j\in
]\bar{\xi}-a/n,\bar{\xi}+b/n[$ will converge to a Poisson distribution
with parameter $b-a$ when $n\rightarrow\infty$. Hence detailed chaos
says in a sense that the set of points $x_1,...x_n$ in the limit are
as random as possible, given the constraint that their laws
$\sigma_{n,\epsilon}$ are $(\alpha,g)$-chaotic. 
  \end{remark}

\begin{figure}[!h]
  \centering
  \includegraphics[width=0.5\textwidth]{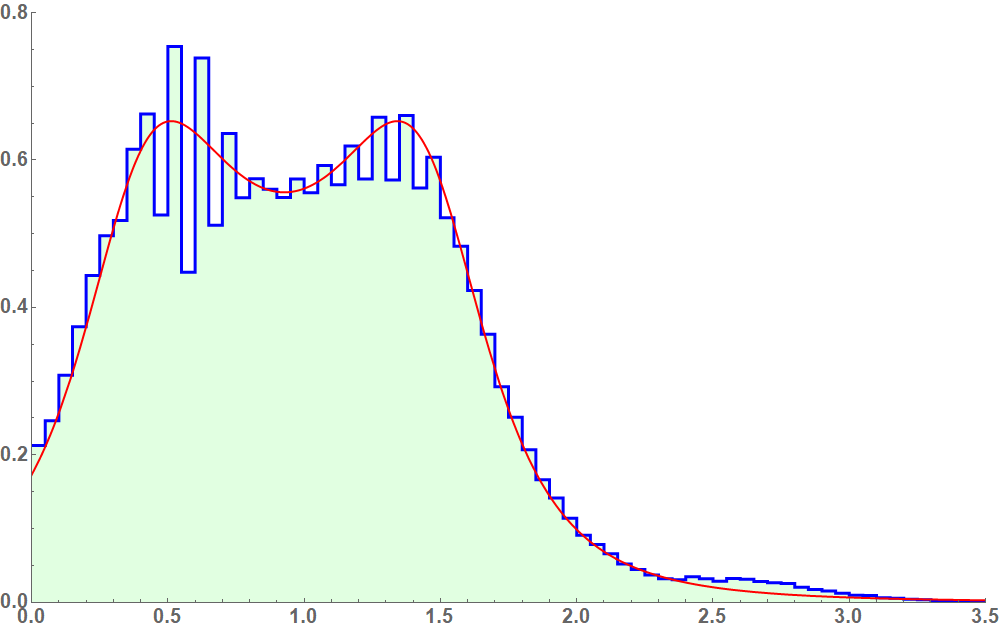}\;%
   \includegraphics[width=0.5\textwidth]{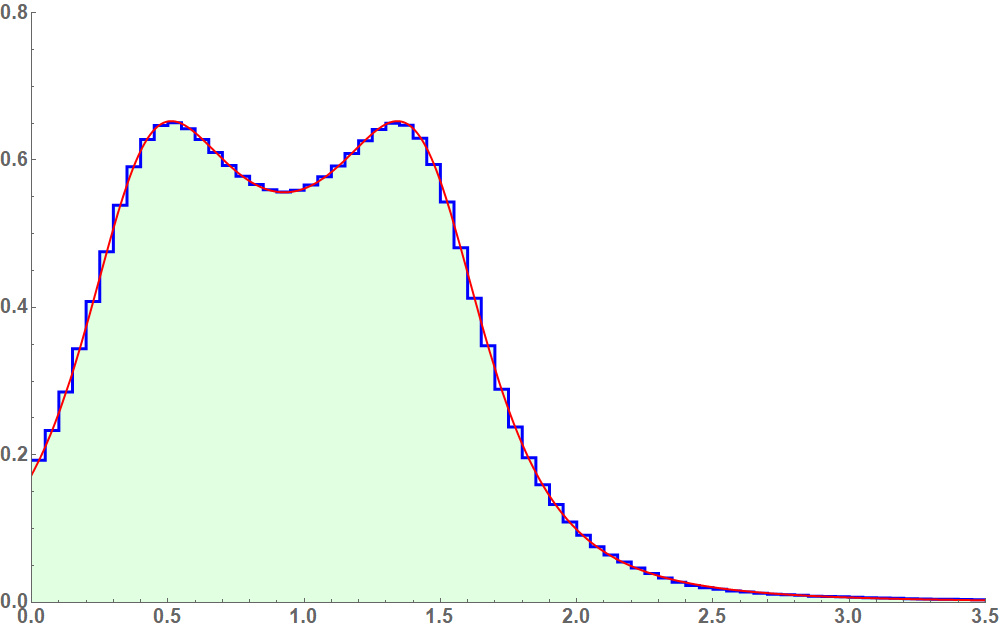}\;%
  \caption{\small The graphs show the particle density $g(x)$ (thin, red),
    and empirical histograms of samples (blue, thick) as defined in
    Theorem~\ref{thm:chaosexp}.
    To the left $n=50$, and to the right $n=1000$. The number of
    independent samples is 5000 and the bin width in the histograms is $0.05$.}
  \label{fig:chsampl1}
\end{figure}

\section{The Kac process}
\label{sec:kac}

\subsection{Specification of the Master Equation}

Kac's result~\cite{Kac1956} for the original Kac model is  that
propagation of chaos as described in 
the introduction is sufficient to identify an evolution equation for
the limiting densities, the Kac-Boltzmann equation. When the jumps
are constrained by the exclusion principle the situation is more subtle,
and propagation of chaos according to the definition~\ref{def1} is not
enough to identify a limiting equation. In this section we will
present the Kac-process, and derive a limiting Kac-Boltzmann equation
that is valid under the assumption of chaos according to
Definition~\ref{def2}, with an 
exponential gap distribution as in Theorem~\ref{thm:chaosexp}. 

The jump process is then as follows: With
$x=(x_1,...,x_n)\in \SEnes$,

\begin{itemize}
\item[(1)] pick a random waiting time $t$, exponentially distributed with
  rate $n$
\item[(2)] pick $1\le j < k \le n$ uniformly among possible pairs, and
  let $\bar{x}_{j,k}=\frac{x_j+x_k}{2}$. 
\item[(3)]  Let 
$(x_j^*,x_k^*)=\left(\bar{x}_{j,k}(1-\xi),\bar{x}_{j,k}(1+\xi)\right)$,
where $\xi$ is chosen uniformly in the $[-1,1]$.

\item[(4)] If $(x_1,....,x_j^*,...,x_k^*,...,x_n) \in \SEnes  $, then let $x^*=
  (x_1,....,x_j^*,...,x_k^*,...,x_n)$, else do nothing, {\em i.e.} let
  $x^*=x$ 
\end{itemize}
Note that the distribution of two particle energies after a collision
would be exactly the same if the step (3) ware replaced by

\begin{itemize}
\item[(3b)]   Pick   $\xi$  randomly from  $[-1,1]$, and let
  \begin{align*}
    x_j^* &= x_j + \xi \bar{x}_{j,k}\quad  (\hspace{-1ex}\mod x_j+x_k)\\
    x_k^* &= x_k - \xi \bar{x}_{j,k} \quad  (\hspace{-1ex}\mod x_j+x_k)\,.
  \end{align*}
  where $\mod$ here simply means that if $x_j+\xi
  \bar{x}_{j,k}>x_j+x_k$, then $(x_j+x_k)$ is added or subtracted to map
  $x_j^*$ back into the interval $0\le x_j^*< (x_j+x_k)$.
\end{itemize}
This collision process is reversible for any fixed $\xi$, and can also
be naturally generalized to collision models which favor small energy
exchanges in the collision, or for ``grazing collision limits'', which
are interesting in the classical setting.

However, for the purpose of writing down the generator of the process,
the version as originally described is simplest.   
Let $\mathcal{L}$ denote the generator. Then for any continuous function $F$ on $\SEnes$
\begin{align}\label{gen}
\mathcal{L}F(x) =  \frac{1}{n-1}\sum_{j<k} \int_{-1}^1
  1_{\SEnes}(x^*_{i,k,\xi})[F(x^*_{j,k,\xi}) - F(x)]{\rm d}\xi  
\end{align}
where $x= (x_1,\dots,x_n)$,  and $x^*_{j,k,\xi}  =
(x_1,....,\bar{x}_{j,k}(1-\xi)
,...,\bar{x}_{j,k}(1+\xi),...,x_n)$. Recall that the process satisfies
detailed balance, and is reversible, so that the operator $\mathcal{L}$ on the $L^2$ space given by the invariant measure is self-adjoint. 
The Kolmogorov forward equation, or what is the same thing, the Master
Equation, of the process is then

\begin{align}\label{Mas}
\frac{\partial}{\partial t}F(x,t) = \mathcal{L}F(x,t)\ .
\end{align}
Let ${\mathcal P}_t$ denote the semigroup associated to \eqref{Mas},
so that if $F(x,t)$ denotes the solutions with initial data $F(x,0)$,  
 $F(x,t) = {\mathcal P}_tF(x,0)$. 

\subsection{The exclusion factor}

To compute exactly the probability that the outcome from step (3)
results in a jump as defined in (4) is difficult, but it is possible
to derive formula for the limit as $n\rightarrow\infty$ under the
assumption that the limiting gap distribution is known and that the
events that $x_j^*$ and $x_k^*$ are admissible positions for particles
are independent. We also assume here that the density $g(x)$ is continuous.

First, to see why  propagation of chaos in the sense of Kac is not
enough to identify a limiting  equation we compare two different
chaotic sequences that are $(\alpha,f_{\alpha})$-chaotic, where
$f_{\alpha}$ is the equilibrium  density as found in
Theorem~\ref{main}. We take the empirical measures 
with the $x_j$ defined as in~(\ref{Tndef}), and $\alpha_n=\alpha$ for
simplicity. On the other hand taking $\tilde{z}_j=1/n$, for
$j=1,...,n$ provides another chaotic sequence. In this latter sequence
the gaps between particles are deterministic,
$x_{j+1}-x_{j}=(1-\alpha/2)/(n-j)$, and this means that to fit a new
particle of size $\alpha/(n-1)$ into an interval we must have
$\frac{j}{n} > 1-\frac{2-\alpha}{2\alpha}$, which is positive when
$\alpha>2/3$, and therefore for all $x$ smaller than
\begin{align*}
  x_j &\ge  \left(1-\frac{\alpha}{2}\right) \sum_{k=1}^{\lfloor
        1-\frac{2-\alpha}{2\alpha} 
  \rfloor}\frac{1}{n+1-k} + \frac{3-\alpha}{2}
        -\frac{\alpha}{n}
\end{align*}
which converges to 
\begin{align*}
   & \bar{x}_{\alpha}= \log\left(\frac{2\alpha}{2-\alpha}\right)
     +\frac{3\alpha-2}{2}\,.   
\end{align*}
when $n\rightarrow\infty$. So if $\alpha>2/3$ this
$(\alpha,f_\alpha)$-chaotic sequence does not 
allow any jump into an interval $[0, \bar{x}_{\alpha}]$. On the other hand, for
the sequence constructed  in Section~2, where the
$(\tilde{z}_1,...,\tilde{z}_n)$ taken from the flat Dirichlet
distribution, the $\tilde{z}_j$ are close to being exponentially
distributed with mean $1/n$. Hence for all $j$ there is a positive
probability that the $j$-th gap is bigger than $\alpha/(n-1)$, and
therefore jumps are possible to any point in the interval
$[0,\infty[$, although the probability will be very small in intervals
near the origin if $\alpha$ is large.

In the following calculation we  neglect the probability
that $x_j^*$ belongs to one of the gaps created when $x_j$ and $x_k$
are lifted out, {\em i.e.} when the particles fall back into nearly
the same point as where they started. The probability that this
happens converges to zero at the order $1/n$, and can be neglected
unless the excess energy is very small. Also in this case, the effect
of such a jump on the density will be very small, and therefore to see
an effect of this one would need to consider the process over a very
long time scale. It could be interesting to study this situation in a
diffusive scaling, and to analyze as certain models for competing
particle systems and rank based interacting
diffusions~\cite{PalPitman2008,Shkolnikov2012,Reygner2015}.

For any $x$, consider  an interval $[x-\delta/2,x+\delta/2]$, where
$\delta$ is assumed to be small and eventually converging to 0. 
We will call a point $x_*$ in this interval admissible if it satisfies
the exclusion constraint, given the particles that are already present
in the interval. The expected number of
points $x_j$ belonging to this interval will be $m+1= n \int_{x-\delta/2}^{x+\delta/2}
g(y)\,{\rm d}y\sim n \delta \, g(x)$ due to the assumption that $g$ is
continuous. Now let $x-\delta/2<x_{(0)}<x_{(2)},...,<x_{(m)} <x+\delta/2$ be the positions of
the $m$ particles belonging to this interval, renumbered for
convenience here, and let $\zeta_{j+1}= x_{(j)}-x_{(j-1)} -\alpha/(n-1)$ be the 
gaps between particles. For fixed $\delta$ we have that $x_{(0)} \rightarrow x-\delta/2$ and
$x_{(m)} \rightarrow x+\delta/2$ in probability, and therefore the error in
considering only the interval $[x_{(0)},x_{(m)}]$ will vanish in the limit as
$n\rightarrow \infty$. For a given gap $\zeta_i$, the interval
available for putting a new particle $x_*$ is $(\zeta_i-\alpha/(n-1))
\one_{\zeta_i>\alpha/(n-1)}$. In a jump,  $x_*$ is chosen
  uniformly over any interval , and therefore
  \begin{align}
    \label{eq:probadmi}
    \Pe\big[ x_* \, \mbox{ is admissible }\;\big|\;  x_*\in
    ]x_{(0)},x_{(m)}[ \big]
    = \frac{\frac{1}{m}\sum_{i=1}^{m}(\zeta_i -\frac{\alpha}{n-1})
   \one_{\zeta_i>\alpha/(n-1)}   }{\frac{1}{m}(x_{(m)}-x_{(0)})} 
  \end{align}
which holds for any particle configuration, if the interval
$[x_{(0)},x_{(m)}] $ does not contain the to particles that are
selected for collision.     The probability that $x_*$
is admissible can now be computed by taking the expectation of the
right hand side of equation~(\ref{eq:probadmi}) with respect to the
other particles. To continue we make
the following assumption:
\begin{assumption}
  For $n\rightarrow\infty$, one may take
  $\delta=\delta_n\rightarrow 0$ such that $m\rightarrow\infty$ in
  probability, and such that $(n-1) \zeta_i /\alpha$ are
  asymptotically i.i.d with a density $\rho_x$.
\end{assumption}

This holds for the two constructions of chaotic sequences given in
Theorem~\ref{chacon} and Theorem~\ref{thm:chaosexp} if the density
$g$ is continuous. By the law of large numbers, the denominator of the righthand side of
equation~(\ref{eq:probadmi}) is asymptotically $\Ee[\zeta_j] +
\frac{\alpha}{n-1} \sim \frac{1}{(n-1) g(x)}$,
 and the enumerator is asymptotic to
\begin{align}
  \frac{\alpha}{n-1} \Ee\left[ \frac{n-1}{\alpha}\zeta_i -1\right] &=
      \frac{\alpha}{n-1} \int_{1}^{\infty}(s-1)\rho_x(s)\,ds\,,
\end{align}
and therefore, for any interval not containing $x_j$ or $x_k$ we have
\begin{align}
  \lim_{\delta\rightarrow 0}\lim_{n\rightarrow\infty}
           \Pe\big[ x_* \, \mbox{ is admissible }\;\big| x_*\in
  [x-\delta/2,x+\delta/2]\big] = \alpha g(x) \int_{1}^{\infty}
  (s-1)\,\rho_x(s)\,ds\,.  
\end{align}

For example, with the chaotic sequence from
Theorem~\ref{thm:chaosexp}, $\rho_x(s) = \frac{\alpha g(x)}{1-\alpha
  g(x)} \exp\left(-\frac{\alpha g(x)}{1-\alpha g(x)}s\right)$, and we
find in the limit that
\begin{align}
  \label{eq:exclexp}
  \Pe\left[ x_* \; \mbox{is admissible} \,\right]
  & = \frac{\alpha^2 g(x_*)^2}{1-\alpha g(x_*)}\int_{1}^{\infty}(s-1)\,
    e^{-\frac{\alpha g(x_*)}{1-\alpha g(x_*)} s}\,ds \nonumber \\
  &= (1-\alpha g(x_*)) \exp\left(-\frac{\alpha g(x_*)}{1-\alpha
    g(x_*)}\right)\,. 
\end{align}
Here we recognize the first factor $1-\alpha g(x_*)$ as the exclusion
factor in the Uehling-Uhlenbeck equation for discrete energy levels,
and the second exponential factor reflects the fact that the
continuous spacing of gaps is a less efficient use of the available
excess energy.

With the chaotic sequences constructed through the Dirichlet
distribution as in Theorem~\ref{chacon} the distribution of the
gaps are Beta-distributions, as shown in equation~(\ref{eq:15bK}),
which gives the density for the distribution of a gap in the partition
of excess energy as 
\begin{align}
  \label{eq:beta1}
  \frac{\Gamma(K n)}{\Gamma( K n w_j ) \Gamma( K n (1-w_j)}
       \tz ^{Kn w_j-1} (1-\tz)^{Kn(1-w_j)-1}\,,
\end{align}
and hence, because $\zeta_j =
(1-\alpha/2))\frac{\tz_j}{1-\frac{j-1}{n}}$ that
$s = (n-1) \zeta_j /\alpha$ has density
\begin{align}
  \rho_{x,n}(s)=  c s^{K n w_j} \left(1-\frac{s}{\lambda_j}\right)^{K
  n (1-w_j) -1}\,, 
\end{align}
where $c$ is a normalizing constant and $\lambda_j = \frac{(n-1) 2
  \alpha }{2-\alpha}\frac{1}{1-G(x)}$. Because $\frac{j}{n}\sim G(x)$
we have asymptotically
\begin{align}
  n w_j = n \int_{(j-1)/n}^{j/n} w(\xi)\,{\rm d}x \sim w(G(x))=
  \frac{2 \alpha}{2-\alpha} \frac{1-\alpha g(x)}{\alpha g(x)}
  \left(1-G(x)\right)\equiv w_{g(x),G(x)}\,, 
\end{align}
and $K n / \lambda_n \sim K\frac{2-\alpha}{2\alpha}
\left(1-G(x)\right))$\,.
It follows that
\begin{align}
  \rho_{x,n}(s) \rightarrow \rho_{g(x),G(x)}(s) = s^{ K w_{g(x),G(x)}
  -1} \exp\left(-K\frac{2-\alpha}{2\alpha} \left(1-G(x)\right)) s \right)\,.
\end{align}
One can now obtain a formula similar to equation~(\ref{eq:exclexp})
corresponding to the density $\rho_{g(x),G(x)}$.  The notable
difference is that with this density the probability that a point
$x_*$ is admissible asymptotically does not only depend on the
limiting density $g(x)$ but also on the cumulative distribution
function $G(x)$.

Hence, when analyzing the limiting behavior of the Kac process for
this $n$-particle system, it is important to take the gap distribution
into account. We formulate this asymptotic result for the exponential
gap distribution as a proposition:

\begin{prop}
  \label{prop:exp}
  Let $g(x)$ be a continuous probability density on $[0,\infty[$, and
  let $\left( (x_1,...,x_n)  \right)_{n=2}^{\infty}$ be a chaotic
  sequence constructed as in Theorem~\ref{thm:chaosexp}. There is a
  sequence $\delta_n\rightarrow0$ such that if $x_*$ is chosen
  uniformly in an interval $[x,x+\delta_n]$, then
  \begin{align}
    \lim_{n\rightarrow\infty} \Pe[ x_* \; \mbox{ is admissible }]
    &=\left( 1-g(x)\right) \exp\left( -\frac{\alpha g(x)}{1-\alpha g(x)}\right)
  \end{align}

\end{prop}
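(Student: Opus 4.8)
The plan is to make rigorous, for the specific chaotic sequence of Theorem~\ref{thm:chaosexp}, the formal derivation that culminates in equation~\eqref{eq:exclexp}. The starting point is the exact conditional identity~\eqref{eq:probadmi}: conditioned on $x_*$ landing in the window and on the positions of the other particles, the admissibility probability is the ratio of the mean truncated gap $\frac1m\sum_{i=1}^m(\zeta_i-\tfrac{\alpha}{n-1})\one_{\zeta_i>\alpha/(n-1)}$ to the mean spacing $\frac1m(x_{(m)}-x_{(0)})$. I would first fix $x$ with $g(x)>0$ (the case $g(x)=0$ being trivial, since then no particle sits near $x$ and every candidate is admissible in the limit) and choose $\delta_n\to0$ slowly, e.g.\ $\delta_n=n^{-1/2}$, so that the number $m=m_n$ of particles in $[x,x+\delta_n]$ satisfies $m_n\sim n\delta_n g(x)\to\infty$ in probability, while $g$ is essentially constant across the window by continuity.

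The heart of the argument is to verify the Assumption for this construction, i.e.\ that the rescaled gaps $s_i:=(n-1)\zeta_i/\alpha$ of the particles inside the shrinking window are asymptotically i.i.d.\ with the exponential density $\rho_x(s)=\tfrac{\alpha g(x)}{1-\alpha g(x)}\,e^{-\frac{\alpha g(x)}{1-\alpha g(x)}s}$ appearing in~\eqref{eq:79gapdist}. In the Theorem~\ref{thm:chaosexp} construction the $\eta_j$ are the order statistics of $n-1$ i.i.d.\ samples from $\psi(\eta)\,\di\eta$ and $\tz_j=\eta_j-\eta_{j-1}$; by Lemma~\ref{push} together with~\eqref{tzform} and~\eqref{wpsiform} the gaps localize to the value of $g$ at $x$. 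I would then invoke the standard local asymptotics of uniform order-statistic spacings: near a fixed quantile the consecutive spacings, rescaled by $n$, converge jointly to i.i.d.\ exponentials, so finitely many --- and, with the slow decay of $\delta_n$, also the growing collection of --- gaps in the window become asymptotically independent exponentials with the single rate determined by $g(x)$. This both produces $\rho_x$ and supplies the independence needed for a law of large numbers.

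Given this, I would apply the law of large numbers to the numerator and denominator of~\eqref{eq:probadmi} separately. The denominator equals $\frac1m\sum_i\zeta_i+\tfrac{\alpha}{n-1}$, which converges to $\Ee[\zeta_i]+\tfrac{\alpha}{n-1}\sim\tfrac{1}{(n-1)g(x)}$, since the mean of $\rho_x$ is $\tfrac{1-\alpha g(x)}{\alpha g(x)}$; the numerator converges to $\tfrac{\alpha}{n-1}\int_1^\infty(s-1)\rho_x(s)\,\di s$. Taking the ratio cancels the factors of $(n-1)$ and yields the window-conditional limit $\alpha g(x)\int_1^\infty(s-1)\rho_x(s)\,\di s$; since $x_*$ is uniform on $[x,x+\delta_n]$ and lies in $]x_{(0)},x_{(m)}[$ with probability tending to one, the unconditional probability has the same limit. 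Evaluating the integral for the exponential $\rho_x$ gives $\alpha g(x)\cdot e^{-\lambda}/\lambda=(1-\alpha g(x))\,e^{-\lambda}$ with $\lambda=\tfrac{\alpha g(x)}{1-\alpha g(x)}$, which is exactly~\eqref{eq:exclexp} evaluated at $x_*=x$, the claimed formula.

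The main obstacle is the law-of-large-numbers step: the gaps are neither exactly i.i.d.\ nor uniformly bounded, so one must control a triangular array whose marginals only converge to $\rho_x$. Two points need care. First, the joint limit in $n$ and $\delta_n$ must be taken simultaneously rather than iterated as in the informal derivation, which forces the slow decay of $\delta_n$ and uses the continuity of $g$ to keep the local rate constant across the window. Second, the functional $(s-1)\one_{s>1}$ is unbounded, so passing the expectation through the limit requires a uniform integrability bound on the rescaled spacings $s_i$; this is where the explicit $\mathrm{Beta}(1,n-1)$-type control behind~\eqref{eq:79gapdistAB} and Lemma~\ref{push} is needed to dominate tails uniformly in $n$. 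Once uniform integrability and asymptotic independence are in hand, convergence in probability of the ratio --- and hence of $\Pe[x_*\text{ is admissible}]$ --- follows routinely.
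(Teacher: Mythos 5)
Your proposal is correct and takes essentially the same route as the paper: the paper establishes Proposition~\ref{prop:exp} precisely by the derivation you reconstruct --- the conditional identity~\eqref{eq:probadmi} on a shrinking window, the verification that the construction of Theorem~\ref{thm:chaosexp} yields asymptotically i.i.d.\ exponential rescaled gaps with rate $\alpha g(x)/(1-\alpha g(x))$ (the paper's standing Assumption), a law of large numbers applied separately to numerator and denominator, and the evaluation of $\alpha g(x)\int_1^\infty (s-1)\rho_x(s)\,{\rm d}s$ giving~\eqref{eq:exclexp}. Your limit $\left(1-\alpha g(x)\right)\exp\left(-\frac{\alpha g(x)}{1-\alpha g(x)}\right)$ matches~\eqref{eq:exclexp} (the factor $1-g(x)$ in the proposition's statement is evidently a typo for $1-\alpha g(x)$), and your attention to uniform integrability and the simultaneous $n$, $\delta_n$ limit is, if anything, more careful than the paper's argument, which leaves these points inside the Assumption.
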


\subsection{The Boltzmann equation}

For the original Kac process, it is enough to prove propagation of
chaos to identify an equation that describes the evolution of a density in
the limit of infinitely many particles. Here the situation is more
complicated, because the asymptotic gap distribution is important. We
conjecture that the process defined here propagates chaos with
exponential gap distribution, but we do not have a proof. The
conjecture is supported by numerical simulations that are presented in
Section~\ref{sec:sim}. Under the assumption that detailed chaos is
propagated, it is then possible to write down the  
corresponding kinetic equation, and compare this with the
corresponding Kac and Uehling-Uhlenbeck equations; a formal proof
would follow along the same lines as Kac's original derivation.

\begin{thm}\label{mKac}  Suppose that the evolution specified by
  \eqref{Mas} propagates chaos with parameter $\alpha$, and that the
  asymptotic gap distribution is exponential as in
  Theorem~\ref{thm:chaosexp}. Then the 
  limiting empirical distribution $g_t$ evolves according  
\begin{equation}\label{MkacEq}
\frac{\partial}{\partial t} g(x,t) = Q [g](x,t)\ ,
\end{equation}
where
\begin{align}
  \label{eq:35}
  Q[g](x)= \frac{1}{2}\int_{0}^{\infty} \int_{-1}^1
     & \bigg(g(x')g(y') \Pi(\alpha g(x) )\Pi(\alpha g(y) )- \nonumber \\
  &  g(x)g(y) \Pi(\alpha g(x') )\Pi(\alpha g(y'))\bigg)\, {\rm d}\xi {\rm d}y\,,
\end{align}
\begin{equation}
    \label{eq:35b}
  \begin{split}
  x' &= (1-\xi) (x+y)/2\,,\\
  y' &= (1+\xi) (x+y)/2\, ,    
  \end{split}
\end{equation}
and
\begin{align}\label{exclu}
\Pi(u) 
  &=( 1-u) \exp\left(-\frac{u}{1-u}\right)\,.
\end{align}

\end{thm}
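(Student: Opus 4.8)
The plan is to derive \eqref{MkacEq} in weak (dual) form by applying the generator $\mathcal{L}$ of \eqref{gen} to the linear observables $F(x)=\frac1n\sum_{i=1}^n\phi(x_i)$ for $\phi\in C_b(\R_+)$ with $\phi(0)=0$, and then to let $n\to\infty$ under the two standing hypotheses (propagation of chaos with parameter $\alpha$ and the exponential gap law of Theorem~\ref{thm:chaosexp}). Since $F=\int\phi\,{\rm d}\mu_n$, this produces an equation for $\tfrac{{\rm d}}{{\rm d}t}\int\phi\,{\rm d}\mu_n$, which in the chaotic limit becomes $\tfrac{{\rm d}}{{\rm d}t}\int\phi\,g_t\,{\rm d}x$. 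Because only the two colliding coordinates change under a jump, one has (the extra $1/n$ coming from $F=\tfrac1n\sum_i\phi(x_i)$)
\[
\mathcal{L}F(x)=\frac{1}{n(n-1)}\sum_{j<k}\int_{-1}^1 1_{\SEnes}(x^*_{j,k,\xi})\bigl[\phi(x_j^*)+\phi(x_k^*)-\phi(x_j)-\phi(x_k)\bigr]\,{\rm d}\xi\,,
\]
and since the laws $F_n(\cdot,t)$ are symmetric, all $\binom{n}{2}$ pairs contribute equally in expectation, so that $\Ee[\mathcal{L}F]$ equals $\tfrac12$ times the expectation of the single-pair integrand with $(j,k)=(1,2)$.

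The decisive step is to resolve the exclusion indicator $1_{\SEnes}(x^*_{1,2,\xi})$ in the limit. Writing $(x,y)=(x_1,x_2)$ for the pre-collision energies and $(x',y')$ as in \eqref{eq:35b} for the post-collision energies, the jump is accepted precisely when both $x'$ and $y'$ are admissible given the remaining $n-2$ particles. Under the standing assumption that these two admissibility events decouple asymptotically and that the rescaled gaps are asymptotically i.i.d.\ exponential (the content of Definition~\ref{def2} together with Theorem~\ref{thm:chaosexp}), I would invoke the computation behind Proposition~\ref{prop:exp}: conditioning on $(x_1,x_2)=(x,y)$ and integrating out $x_3,\dots,x_n$,
\[
\Ee\bigl[1_{\SEnes}(x^*_{1,2,\xi})\mid x_1=x,\,x_2=y\bigr]\longrightarrow \Pi(\alpha g(x'))\,\Pi(\alpha g(y'))\,,
\]
with $\Pi$ as in \eqref{exclu}. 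Propagation of chaos then replaces the two-particle marginal of $F_n(\cdot,t)$ by the product $g_t(x)g_t(y)$, yielding the weak form
\[
\frac{{\rm d}}{{\rm d}t}\int_0^\infty\phi\,g_t\,{\rm d}x=\frac12\int_0^\infty\!\!\int_0^\infty\!\!\int_{-1}^1\bigl[\phi(x')+\phi(y')-\phi(x)-\phi(y)\bigr]\Pi(\alpha g(x'))\Pi(\alpha g(y'))\,g(x)g(y)\,{\rm d}\xi\,{\rm d}y\,{\rm d}x\,.
\]

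To recover the strong form \eqref{eq:35} I would exploit the two symmetries of the kernel: it is invariant under $\xi\mapsto-\xi$ (which swaps $x'\leftrightarrow y'$) and under $x\leftrightarrow y$ (which fixes $x+y$ and hence $x',y'$). These identify the two gain terms $\phi(x'),\phi(y')$ and the two loss terms $\phi(x),\phi(y)$ with one another. The loss terms directly produce the subtracted summand $g(x)g(y)\Pi(\alpha g(x'))\Pi(\alpha g(y'))$ of \eqref{eq:35}. For the gain terms I would pass to the variables $s=x+y$ and $u=(1-\xi)s/2=x'$, for which ${\rm d}\xi=-\tfrac2s\,{\rm d}u$ and $y'=s-u$; this maps the post-collision position $x'$ onto the free variable and turns the admissibility factors $\Pi(\alpha g(x'))\Pi(\alpha g(y'))$ into $\Pi(\alpha g(x))\Pi(\alpha g(y))$ while the densities $g(x)g(y)$ become $g(x')g(y')$, producing the first summand of \eqref{eq:35}. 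Collecting the two contributions, with the overall prefactor fixed by the jump rate $n$ and the symmetrizations above, and reading off the integrand against $\phi$, gives $Q[g]$.

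The main obstacle is the factorization of the exclusion indicator. Everything upstream is the standard Kac bookkeeping, but the replacement of $\Ee[1_{\SEnes}(x^*_{1,2,\xi})\mid x_1,x_2]$ by $\Pi(\alpha g(x'))\Pi(\alpha g(y'))$ requires both that the acceptance of $x'$ and of $y'$ become asymptotically independent and that the local gap statistics seen by a freshly inserted point are exactly the exponential law of Definition~\ref{def2}. It is precisely here that \emph{detailed} chaos, rather than plain $(\alpha,f)$-chaos, is indispensable: a different gap law, such as the Gamma gaps arising from the Dirichlet construction of Theorem~\ref{chacon}, would replace $\Pi$ by a factor depending on $G$ as well as on $g$, and one would obtain a different collision operator. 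Since the paper only assumes, and does not prove, propagation of detailed chaos, this step is carried out at the formal level, exactly as in Kac's original derivation; a fully rigorous justification would require controlling the joint law of the two inserted points together with the ambient gap process uniformly in $n$.
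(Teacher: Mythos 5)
Your derivation coincides with the paper's intended argument. The paper offers no detailed proof of Theorem~\ref{mKac} --- it states only that ``a formal proof would follow along the same lines as Kac's original derivation'' --- and its actual mathematical content consists of exactly the two ingredients you assemble: the standard weak-form bookkeeping with the generator \eqref{gen} acting on $F=\frac1n\sum_i\phi(x_i)$ (symmetry of the laws reducing the pair sum to the $(1,2)$-pair with prefactor $\tfrac12$), and the replacement of the conditional expectation of the exclusion indicator by $\Pi(\alpha g(x'))\Pi(\alpha g(y'))$, which is precisely Proposition~\ref{prop:exp} together with the assumed asymptotic independence of the two insertion events. Your closing remarks --- that this step is exactly where detailed chaos, as opposed to plain $(\alpha,f)$-chaos, is indispensable, and that the Gamma gaps of the Dirichlet construction of Theorem~\ref{chacon} would yield an exclusion factor depending on $G$ as well as $g$ --- reproduce the paper's own discussion, including its honest acknowledgment that the derivation is formal.

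One piece of bookkeeping does not come out as you assert at the end. Carried through literally, your weak form
\begin{equation*}
\frac{{\rm d}}{{\rm d}t}\int_0^\infty\phi\,g_t\,{\rm d}x
=\frac12\int_0^\infty\!\!\int_0^\infty\!\!\int_{-1}^1\bigl[\phi(x')+\phi(y')-\phi(x)-\phi(y)\bigr]\,\Pi(\alpha g(x'))\,\Pi(\alpha g(y'))\,g(x)\,g(y)\,{\rm d}\xi\,{\rm d}x\,{\rm d}y
\end{equation*}
corresponds to $\partial_t g = 2\,Q[g]$ with $Q$ normalized as in \eqref{eq:35}, not to \eqref{MkacEq}. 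A quick check at $\Pi\equiv 1$: in your weak form the loss contribution is $-\frac12\cdot 2\cdot\bigl(\int\phi g + \int\phi g\bigr) = -2\int \phi g$ (the $\xi$-integral contributes $2$, and the terms $\phi(x)$ and $\phi(y)$ each give $\int\phi g$), consistent with the fact that under \eqref{gen} each particle collides at rate $2$ (clock rate $n$, probability $2/n$ of being in the selected pair); but the loss term of \eqref{eq:35} at $\Pi\equiv1$ is $-g(x)$, i.e.\ per-particle rate $1$. So your final sentence ``gives $Q[g]$'' is off by a factor of $2$ relative to \eqref{eq:35} --- or, equivalently, is correct for a clock of rate $n/2$. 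This is the classical Kac factor-of-two ambiguity, and the paper itself is not internally consistent on it (its generator \eqref{gen} and its collision operator \eqref{eq:35} encode different time normalizations; nothing downstream detects this, since $Q[f_\alpha]=0$ and the entropy inequality are invariant under multiplying $Q$ by a positive constant). So this is a flag worth raising explicitly in your write-up rather than a structural flaw: everything else --- the symmetrizations under $\xi\mapsto-\xi$ and $x\leftrightarrow y$, the change of variables $s=x+y$, $u=x'$ for the gain term (which is the paper's own manipulation in its verification that $\int(a+bx)Q[g]\,{\rm d}x=0$), and the identification of where the formal assumptions enter --- is sound and matches the paper's sketched argument.
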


The function $\Pi(u)$ specifies the effects of the exclusion
constraint which slows down the evolution.   The function is plotted
here in figure~\ref{fig:2}, together with the function $\Pi(u)= 1-u$,
which is the corresponding factor in the Uehling-Uhlenbeck
equation. With the car parking analogy from the introduction, this
factor quantifies how much less efficient it is to let cars park at
will along a road compared to using fixed parking slots. In Kac's
original paper there is no exclusion factor, and 
in that case $\Pi(u)$ is constant, equal to 1.

\begin{figure}[H]
  \centering
  \includegraphics[width=0.6\textwidth]{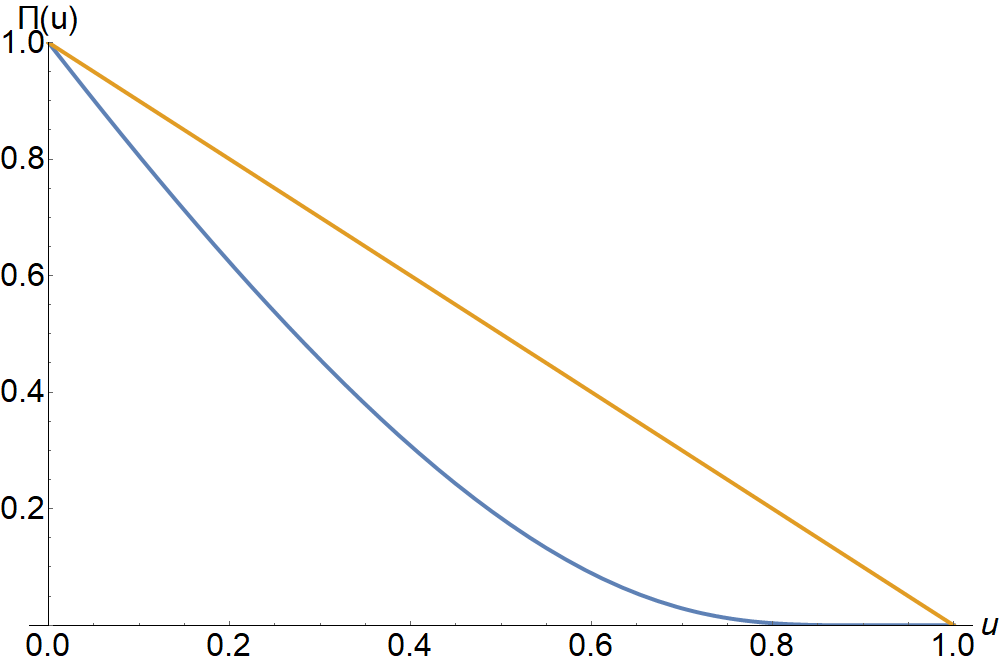}
  \caption{The exclusion factor as a function of $u$ (blue), compared
    with the fermionic factor $1-u$} 
 \label{fig:2}
\end{figure}

Note that $\alpha^{-1}$ is the maximum density possible, and hence
that $\alpha f(x)=1$ implies that the particles are  densely
packed near $x$. The exclusion factor reduces the effective jump rate
much more strongly than the usual factor $1-u$ from the Boltzmann
equation for Fermions, and is a significant difference between the
continuous setting that we study here, and the discrete, quantized
models. In fig.~\ref{fig:3} we plot the function $\Pi(\alpha f(x) )$,
{\em i.e.} the exclusion factor evaluated at the equilibrium density
as a function of the energy $x$, which indicates that particles will
very seldom get a new energy close to $x=0$, and therefore that the
rate of convergence to equilibrium could be very low.

\begin{figure}[H]
  \centering
  \includegraphics[width=0.7\textwidth]{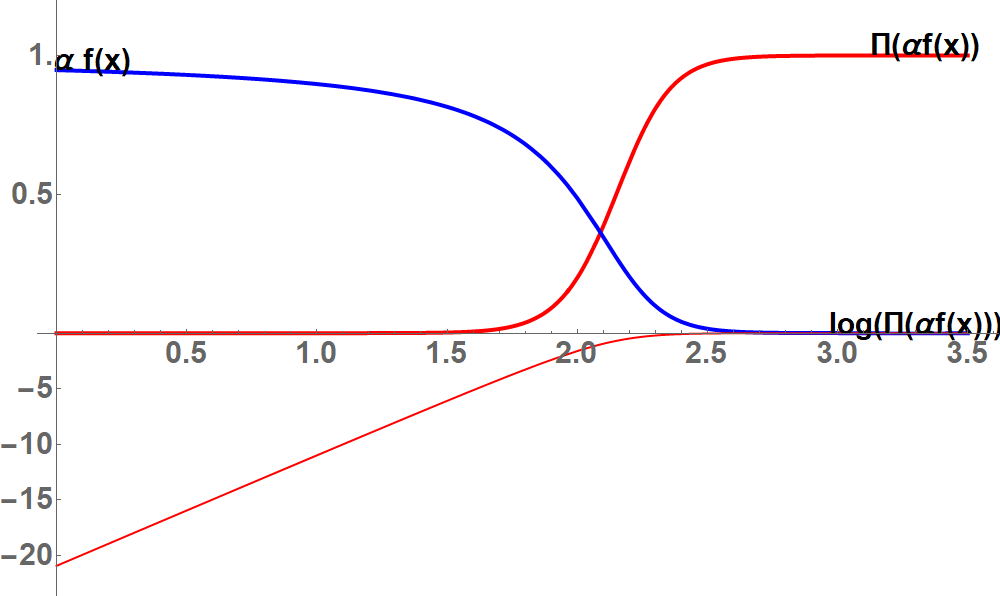}
  \caption{The exclusion factor  $\Pi(\alpha f_{\alpha}(x))$ for
    $\alpha=1.8$, as a function of  
    $x$. The thin red curve shows the logarithm of the same function,
    and the equilibrium distribution $f_{\alpha}(x)$ scaled by a factor
    $\alpha$ is given in blue.}
    \label{fig:3}
\end{figure}

\subsection{Properties of the collision operator}

The collision operator $Q[g]$ as defined in equation~(\ref{eq:35}) is
amenable to very much the same manipulations as the ordinary
collision operator for the Boltzmann equation, except that, in addition
to the mass, there is
only one conserved quantity, the energy.

\begin{thm}
  Let $Q[g]$ be defined as in equation~(\ref{eq:35}). Then the
  following holds:
  
  For any $a,b\in\R$, and any $g(x)$ satisfying
    $\int_0^{\infty} x g(x)\, {\rm d}x =1$
    \begin{align}
      \label{eq:thm44:1}
      \int_{0}^{\infty} (a+b x) Q[g](x)\,{\rm d}x =0\,,
    \end{align}

    Let  $f_\alpha(x)$ defined by equation~(\ref{fk94}) and
    (\ref{eq:20.0}). Then
    \begin{align}
    \label{eq:thm44:2}
        Q[f_{\alpha}](x) &= 0\,.
      \end{align}

      If  $g(x,t)$ is a solution to equation~(\ref{MkacEq}), then
      \begin{align}
        \label{eq:thm44:3}
      \frac{d}{dt}\int_{0}^{\infty} g(x,t) \log\left(\frac{\alpha
      g(x,t)}{1-\alpha g(x,t)}\right)\, {\rm d}x & \le 0\,.
    \end{align}
\end{thm}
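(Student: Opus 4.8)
The plan is to recast the weak form of $Q$ so that its gain and loss terms combine into a manifestly signed dissipation, exactly as in the classical $H$-theorem. The load-bearing observation is a detailed-balance factorization: setting $R(x) := g(x)/\Pi(\alpha g(x))$, a direct computation shows that the integrand of \eqref{eq:35} equals
\[
  M\,\bigl(R(x')R(y') - R(x)R(y)\bigr),\qquad M := \Pi(\alpha g(x))\Pi(\alpha g(y))\Pi(\alpha g(x'))\Pi(\alpha g(y')),
\]
where $M\ge 0$ is symmetric under $x\leftrightarrow y$, under $x'\leftrightarrow y'$, and under the pre/post exchange $(x,y)\leftrightarrow(x',y')$. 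This pushes all the relevant symmetry into $M$ and concentrates the whole $g$-dependence of the collision ratio into the single function $R$, which is what makes both the conservation laws and the entropy inequality transparent.

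The key technical step is a change of variables making the pre/post-collision exchange explicit. For a test function $\varphi$ I would write $\int_0^\infty \varphi\,Q[g]\,{\rm d}x$ as a triple integral over $(x,y,\xi)$, substitute $y = s-x$ with $s := x+y$, and then, at fixed $s$, replace $\xi$ by $u := x' = (1-\xi)s/2$ (so $y' = s-u$ and ${\rm d}\xi = (2/s)\,{\rm d}u$). Since $x',y'$ depend only on $(s,\xi)$ and not on how $s$ is split, the integral lands on the square $(x,u)\in[0,s]^2$ with weight $2/s$, and the pre/post exchange is simply $x\leftrightarrow u$. Using the reflection symmetry $x\mapsto s-x$ of $M$ and $P(x):=R(x)R(s-x)$ to replace $\varphi(x)$ by $\Xi(x) := \tfrac12\bigl(\varphi(x)+\varphi(s-x)\bigr)$, and then antisymmetrizing in $x\leftrightarrow u$, yields the dissipation identity
\[
  \int_0^\infty \varphi\,Q[g]\,{\rm d}x = -\frac14\int_0^\infty\frac{2}{s}\int_0^s\int_0^s \bigl(\Xi(x)-\Xi(u)\bigr)\bigl(P(x)-P(u)\bigr)\,M\,{\rm d}u\,{\rm d}x\,{\rm d}s.
\]

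Part (1) is then immediate: for $\varphi(x)=a+bx$ one has $\varphi(x)+\varphi(s-x)=2a+bs$, so $\Xi$ is constant in $x$ and $\Xi(x)-\Xi(u)=0$; this is exactly the statement that $a+bx$ is a collision invariant (conservation of mass and energy, $x+(s-x)=s$). For part (2) I would show $Q[f_\alpha]=0$ by proving $\log R$ is affine in $x$, so that $R(x)R(y)$ depends only on $x+y$ and $R(x')R(y')=R(x)R(y)$ pointwise. Using the quantile parametrization $x=\phi(\xi)$ with $\phi'(\xi)=\alpha+(1-\alpha/2)/(1-\xi)$ and $f_\alpha=1/\phi'(\phi^{-1}(x))$, one computes $\tfrac{f_\alpha}{1-\alpha f_\alpha}=\tfrac{1-\xi}{1-\alpha/2}$, hence $\log R = \log\tfrac{1-\xi}{1-\alpha/2} + \tfrac{\alpha(1-\xi)}{1-\alpha/2}$, and the chain rule (with ${\rm d}x/{\rm d}\xi=\phi'(\xi)$) collapses everything to $\tfrac{{\rm d}}{{\rm d}x}\log R = -1/(1-\alpha/2)$, a constant. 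Thus $R(x)=Ce^{-x/(1-\alpha/2)}$ and $Q[f_\alpha]=0$.

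For the $H$-theorem, I first identify the natural entropy variable. Writing $\Phi(g)=g\log\tfrac{\alpha g}{1-\alpha g}$ and using $\log\Pi(\alpha g)=\log(1-\alpha g)-\tfrac{\alpha g}{1-\alpha g}$ from \eqref{exclu}, a short computation gives $\Phi'(g)=\log R + \log\alpha + 1$; the additive constant drops out against $Q[g]$ by part (1), so $\tfrac{{\rm d}}{{\rm d}t}\int\Phi(g)\,{\rm d}x = \int(\log R)\,Q[g]\,{\rm d}x$. Applying the dissipation identity with $\varphi=\log R$, one has $\Xi(x)=\tfrac12(\log R(x)+\log R(s-x))=\tfrac12\log P(x)$, whence
\[
  \frac{d}{dt}\int_0^\infty \Phi(g)\,{\rm d}x = -\frac18\int_0^\infty\frac{2}{s}\int_0^s\int_0^s \bigl(\log P(x)-\log P(u)\bigr)\bigl(P(x)-P(u)\bigr)\,M\,{\rm d}u\,{\rm d}x\,{\rm d}s \le 0,
\]
since $(\log P(x)-\log P(u))(P(x)-P(u))\ge0$ by monotonicity of the logarithm and $M\ge0$. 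The main obstacle is analytic rather than algebraic: the change of variables is degenerate (for fixed $\xi$ the map $(x,y)\mapsto(x',y')$ collapses onto the line $x'+y'=x+y$), so the reparametrization must be routed through $s$ and $u=x'$, and I must check that the $2/s$ weight together with the boundary behavior as $s\to0$ and as $\alpha g\uparrow 1$ does not destroy integrability; for part (3) I would also need to justify differentiating under the integral and the a priori integrability of $(\log R)\,Q[g]$, which call for mild decay and regularity hypotheses on $g$ (with $0\le \alpha g<1$) imposed in the spirit of the rest of the paper.
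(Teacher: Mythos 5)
Your proposal is correct and follows essentially the same route as the paper: the identical change of variables $(x,y,\xi)\mapsto(s,x,u)$ with weight $1/s$, the same symmetrization in $x\mapsto s-x$ and $x\leftrightarrow u$ yielding the signed dissipation identity, the same computation that $\log\bigl(g/\Pi(\alpha g)\bigr)$ is affine at $g=f_\alpha$ (the paper works with $r=\log\bigl(\Pi(\alpha f_\alpha)/(\alpha f_\alpha)\bigr)=-\log R-\log\alpha$ and differentiates via $f_\alpha=1/\phi'(F)$ rather than the quantile variable, but it is the same calculation), and the same entropy argument via monotonicity of the logarithm. The only cosmetic difference is that you carry the weight $M=\Pi\Pi\Pi\Pi$ with $P=R(x)R(s-x)$ where the paper factors out $g(x)g(y)g(x')g(y')$ and uses $(e^{a}-e^{b})(a-b)\ge 0$ -- these are the same inequality in different clothing -- and you are more explicit than the paper about the formal caveats ($\alpha g<1$, integrability, differentiation under the integral).
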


\begin{proof}
  Let
  \begin{align}
    R(x',y',x,y) &= (g(x') g(y')\Pi(\alpha g(x)) \Pi(\alpha g(y))-
    g(x) g(y)\Pi(\alpha g(x')) \Pi(\alpha g(y'))\,.
  \end{align}
  Here $x'$ and $y'$ depend on a parameter $\xi$ as defined in
  equation~(\ref{eq:35b}). Formally, for any $h(x)$, a change of variables gives
  \begin{align}
    \label{eq:100}
    \frac{1}{2}\int_{0}^{\infty} \int_{0}^{\infty}\int_{-1}^{1}
    &  R(x',y',x,y)  h(x) \,{\rm d}\xi  {\rm d}x {\rm d}y = \nonumber \\
    & \int_{0}^{\infty} \frac{1}{u}\int_{0}^{u}
      \int_{0}^{u} 
      R(z, u-z, v, u-v) \,  h(v) {\rm d}v  {\rm d}z {\rm d}u\,.
  \end{align}
We see, just as for the usual Boltzmann equation, that $R(x',y',x,y)$
is symmetric with respect to the changes $(x,y)\to (y,x)$ and anti
symmetric with respect to changing $(x',y',x,y) \to (x,y,x',y')$, and
therefore the righthand side of equation~(\ref{eq:100}) is
\begin{align}
  \label{eq:101}
   \frac{1}{4}\int_{0}^{\infty} \frac{1}{u}\int_{0}^{u}
      \int_{0}^{u} 
     &  R(z, u-z, v, u-v) \, \left(  h(v)+ h(u-v) - h(z) - h(u-z)
  \right) {\rm d}v  {\rm d}z {\rm d}u =  \nonumber \\
& \frac{1}{8}\int_{0}^{\infty} \int_{0}^{\infty}\int_{-1}^{1}
    R(x',y',x,y) ( h(x)+ h(y) -h(x')-h(y') \,{\rm d}\xi  {\rm d}x {\rm d}y\,,
\end{align}
which implies~(\ref{eq:thm44:1}). To prove~(\ref{eq:thm44:2}), we
write $  R(x',y',x,y) $ as 
\begin{align}
  \label{eq:rdef2}
\alpha^2 g(x') g(y')g(x) g(y)\left(
             \frac{\Pi(\alpha g(x))}{\alpha g(x)}\frac{ \Pi(\alpha
                 g(y))}{\alpha g(y)}-
                   \frac{\Pi(\alpha g(x'))}{\alpha g(x')}\frac{ \Pi(\alpha
                 g(y'))}{\alpha g(y')}
   \right)\,.
\end{align}
Next we take $g(x)=f_{\alpha}(x)$ and define
\begin{align}
  \label{eq:103def}
  r(x) &= \log \frac{\Pi(\alpha f_{\alpha}(x))}{\alpha f_{\alpha}(x)}
         = -\log\left(\alpha f_{\alpha}(x) \right) +
             \log\left(1- \alpha f_{\alpha}(x) \right) - \frac{\alpha
         f_\alpha(x) }{1-\alpha f_\alpha(x) } \,.
\end{align}
Then
\begin{align}
  \label{eq:rprim}
  r'(x) &= - f_{\alpha}'(x)
          \left(\frac{1}{f_{\alpha}(x)} +
          \frac{\alpha}{1-\alpha f_\alpha(x)}
          + \frac{\alpha}{(1-\alpha f_\alpha(x))^2}
          \right) \nonumber \\
       &=    - f_{\alpha}'(x)\frac{1}{f_{\alpha}(x) (1-\alpha f_\alpha(x))^2}\,.
\end{align}
On the other hand $f_{\alpha}(x)$ satisfies
\begin{align}
  f_{\alpha}(x)&= \frac{1}{\phi'(F(x))}\,,
\end{align}
where $F(x) = \int_{0}^{x}f(y)\,{\rm d}y$ and 
\begin{align}
  \phi(\xi) &= \left(1-\frac{\alpha}{2}\right) \log\frac{1}{1-\xi} +
              \alpha \xi\,.
\end{align}
Therefore
\begin{align*}
  \frac{1}{f_{\alpha}(x)} &= \phi'(F(x)) =
                           \left(1-\frac{\alpha}{2}\right)
                           \frac{1}{1-F(x)} + \alpha\quad \mbox{and}
  \\
  f_{\alpha}'(x) &= -\frac{\phi''(F(x))}{\phi'(F(x))^2}f(x) =
         - \phi''(F(x)) f(x)^3\,,          
\end{align*}
which when inserted into~(\ref{eq:rprim}) gives
\begin{align}
  r'(x) &= \left(1-\frac{\alpha}{2}\right)
       \frac{1}{(1-F(x))^2} 
     \frac{1}{\left(\frac{1}{f_{\alpha}(x) }-\alpha \right)^2} =
          \left(1-\frac{\alpha}{2}\right)^{-1} \,.
\end{align}
Hence $r(x)$ is a linear function, and because the parenthesis in
equation~(\ref{eq:rdef2}) is
\begin{align}
  \exp( r(x)+r(y)) - \exp( r(x')+r(y') )
\end{align}
and $x+y=x'+y'$ we see that $R(x',y',x,y)$ vanishes when
$g(x)=f_\alpha(x)$. Therefore not only does $Q[f_\alpha](x)$ vanish,
but the whole integrand, which is to say that the collision process
satisfies a detailed balance condition also after passing to the
limit.

Finally, to prove~(\ref{eq:thm44:3}) we write
\begin{align}
  \frac{\partial}{\partial t} \left( g \log\left(\frac{\alpha
  g}{1-\alpha g}\right)\right)&
 = \frac{\partial g}{\partial t}
  \left( \log\left(\frac{\alpha g}{1-\alpha g}\right) +
         \frac{\alpha g}{1-\alpha g}\right) = - Q[g](x) r_{g}(x)\,,
\end{align}
where $r_{g}(x)$ is the expression in~(\ref{eq:103def}) with
$f_\alpha$ replaced by $g$. Using the expression in~(\ref{eq:101}), we
then find
\begin{align*}
  \frac{d}{dt}\int_{0}^{\infty}
  & g(x,t)
    \log\left(\frac{\alpha  g(x,t)}{1-\alpha g(x,t)}\right)\, {\rm d}x  = \\
  &  -\frac{1}{8} \int_{0}^{\infty}\int_{0}^{\infty}\int_{-1}^{1}
    R(x',y',x,y)\left( r_g(x)+r_g(y) - r_g(x')-r_g(y')\right){\rm d}\xi {\rm d}x {\rm d}y =\\
  & -\frac{1}{8} \int_{0}^{\infty}\int_{0}^{\infty}\int_{-1}^{1}
    g(x) g(y) g(x')g(y') \left(e^{r_g(x)+r_g(y)}-
    e^{r_g(x')+r_g(y')}\right) \times \\  
   & \qquad\qquad\qquad\qquad \qquad\qquad   \left( r_g(x)+r_g(y) -
     r_g(x')-r_g(y')\right)){\rm d}\xi {\rm d}x {\rm d}y \le 0\,,
\end{align*}
which proves~(\ref{eq:thm44:3}). Therefore
\begin{align*}
  \int_{0}^{\infty} g(x) \log\left(\frac{\alpha g(x)}{1-\alpha g(x)}\right)\,dx
\end{align*}
is an entropy for the Boltzmann equation~(\ref{eq:35}). 
\end{proof}

\section{ Simulation results}
\label{sec:sim}

We present here simulations to illustrate the results presented in the
previous sections, and to provide support for the conjecture that the
Kac process on $\SEnes$ propagates detailed chaos according to
Definition~\ref{def2}, and
moreover to investigate the long time behavior of solutions of
different types.

The sampling of initial data has been done as described by
Theorem~\ref{chacon} and Theorem~\ref{thm:chaosexp}. A very large number
of random numbers have been used, and in particular for simulations
with a large number of particles, it is necessary to use random
numbers with hight precision. We have generated random numbers with 64
bits precision, using routines from the GNU Scientific Library~\cite{GSL2018}.

In order to avoid having to compute the distance
between the new energy of a particle, $x_j^*$ with the energies of all
other particles, which would imply a computational cost of $\bigoh(n)$
for each jump, the $x_j$ are kept in an ordered list, which is
implemented as minor modification of the AVL-tree as described by
Ben Pfaff~\cite{Pfaff_avl}. In this way the computational cost of one
collision grows as $\bigoh\left(\log(n)\right)$.

In the first example the initial distributions are
$(\alpha,f_\alpha)$-chaotic, {\em i.e.} chosen to converge to the
equilibrium distribution with $\alpha=1$. We compare sampling initial
data that are equidistributed
(which corresponds to taking samples as in Remark~\ref{rem:37} with
$K\rightarrow\infty$ in Equation~(\ref{dir1b}), and a 
Dirichlet distributed initial data with $K=0.02$. These are
shown at $t=0$ (Fig.~\ref{fig:sim1a}), $t=0.1$
(Fig.~\ref{fig:sim1b}), and $t=10.0$ (Fig.~\ref{fig:sim1c}), showing
that although the initial distributions  are equilibrium-chaotic,
they are not at equilibrium for this jump
process. Fig.~\ref{fig:sim1d} shows the gap distribution at $t=0$ and
$t=10$, and includes also the result when the initial distribution is a
true equilibrium for this process, with asymptotically exponentially
distributed energy gaps. For these cases, the gap distribution is very
close to exponential at $t=10$, which supports our conjecture that this
property is propagated in time.  More simulation results can be found
in the supplementary material.

\begin{figure}[!h]
  \centering
  \includegraphics[width=0.5\textwidth]{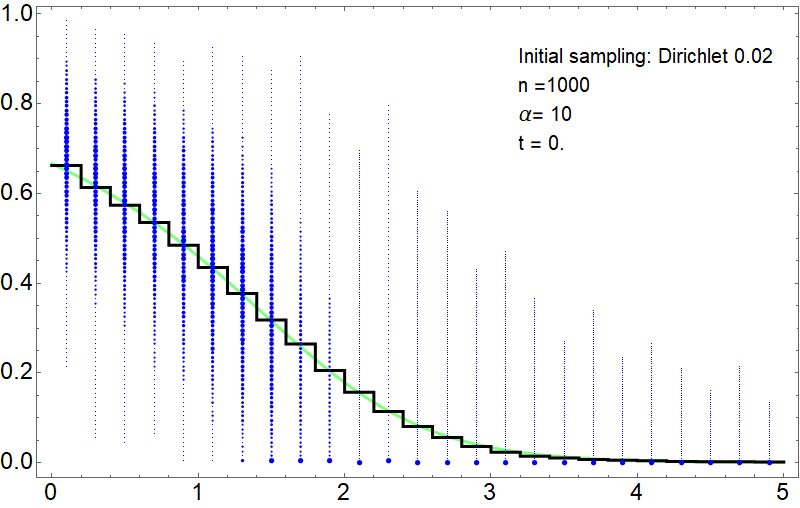}\;%
   \includegraphics[width=0.5\textwidth]{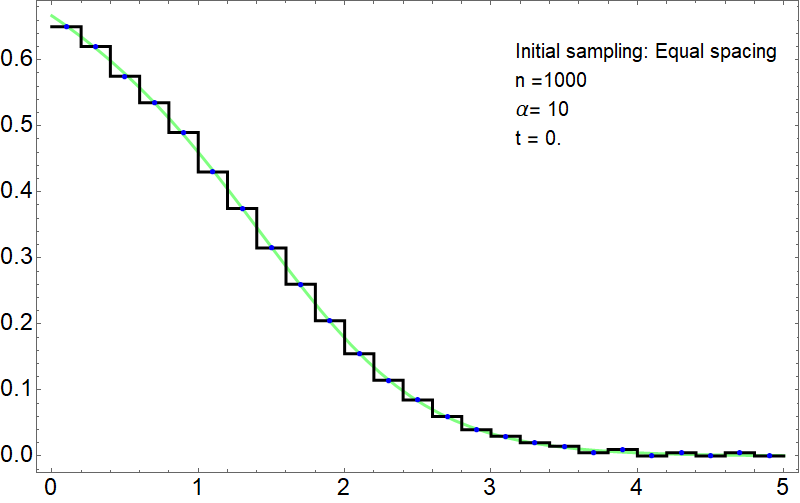}\;%
  \caption{\small The graphs show the equilibrium distribution (green)
  and the result from 5000 independent samples of the empirical
  distribution with $n=1000$, counted in bins of width $0.2$. The
  black step function shows the mean outcome, and the blue dots
  illustrate the distribution of counts in the bins, with the area of
  the dots proportional to the number of samples with the same
  count. The exclusion parameter $\alpha=1.0$}
  \label{fig:sim1a}
\end{figure}

\begin{figure}[!h]
  \centering
  \includegraphics[width=0.5\textwidth]{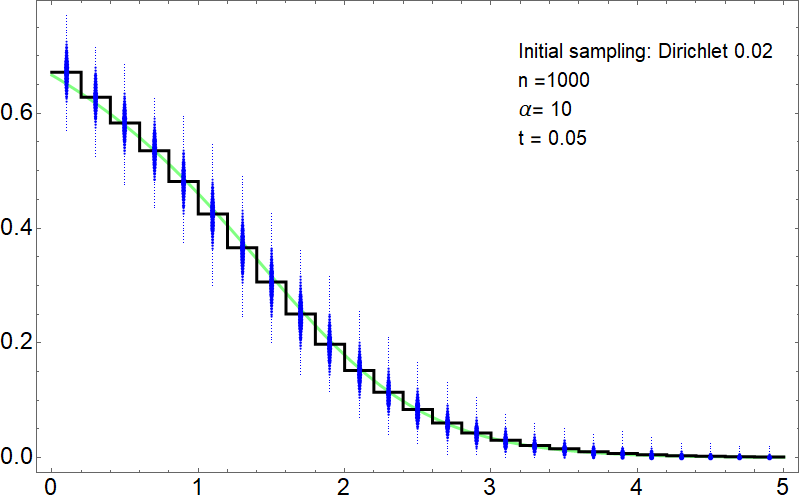}\;%
   \includegraphics[width=0.5\textwidth]{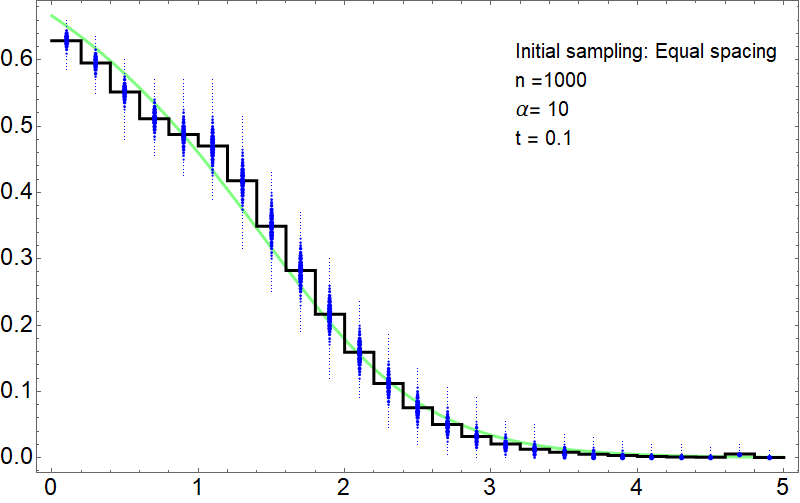}\;%
  \caption{\small The graphs show the outcome of the same simulation
    as in Figure~\ref{fig:sim1a} at time $t=0.1$. This shows that
    although the initial data in both cases are equilibrium chaotic,
    the non-equilibrium state gives quite different behavior of the evolution.}
  \label{fig:sim1b}
\end{figure}

\begin{figure}[!h]
  \centering
  \includegraphics[width=0.5\textwidth]{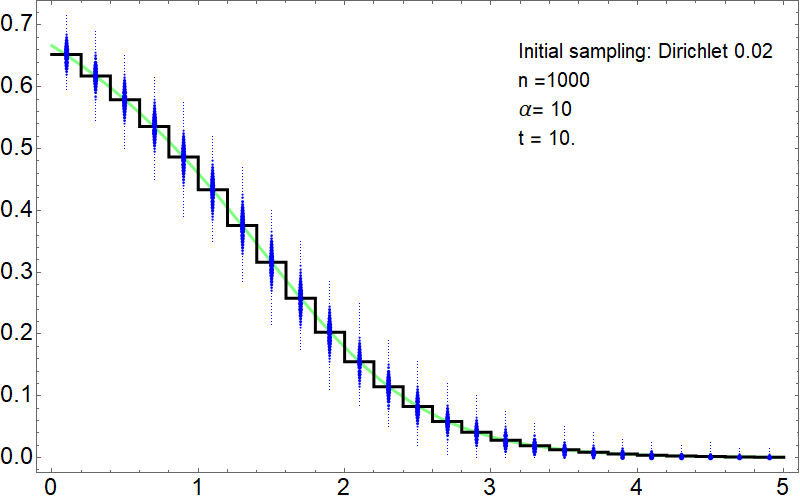}\;%
   \includegraphics[width=0.5\textwidth]{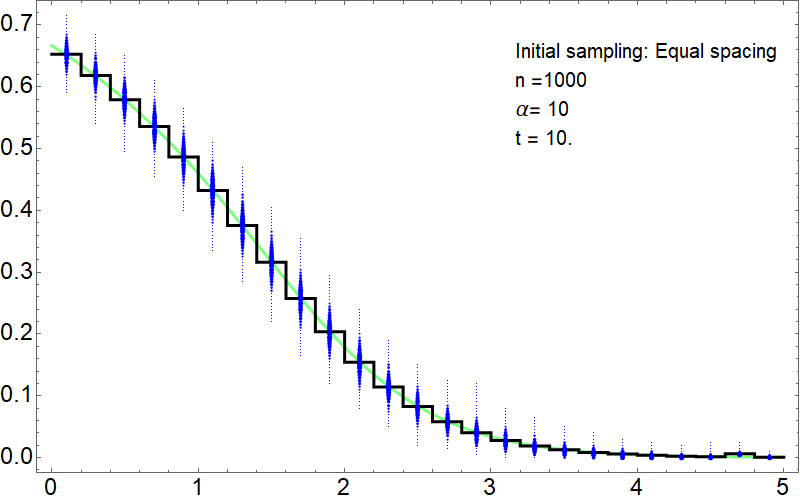}\;%
  \caption{\small These graphs represent the solutions of the same
    simulation as in Figure~\ref{fig:sim1a} at time $t=10.0$. Here thee
  two simulations give the same result, a convergence to the true equilibrium.}
  \label{fig:sim1c}
\end{figure}

\begin{figure}[!h]
  \centering
  \includegraphics[width=0.5\textwidth]{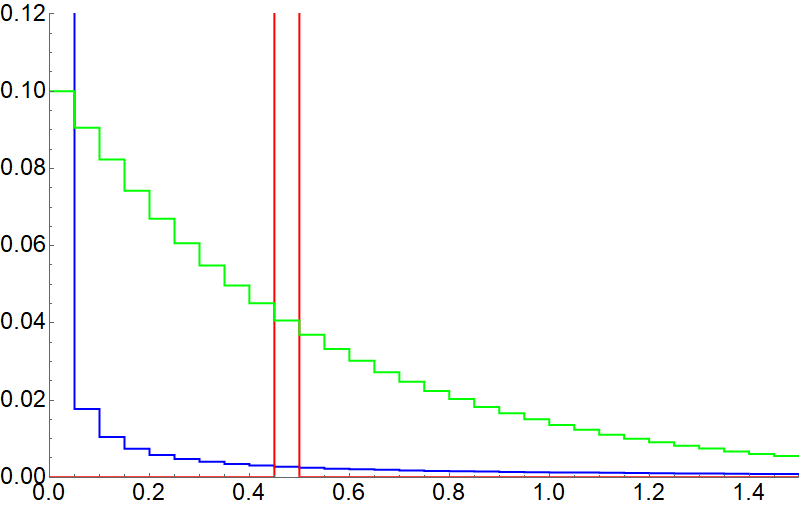}\;%
   \includegraphics[width=0.5\textwidth]{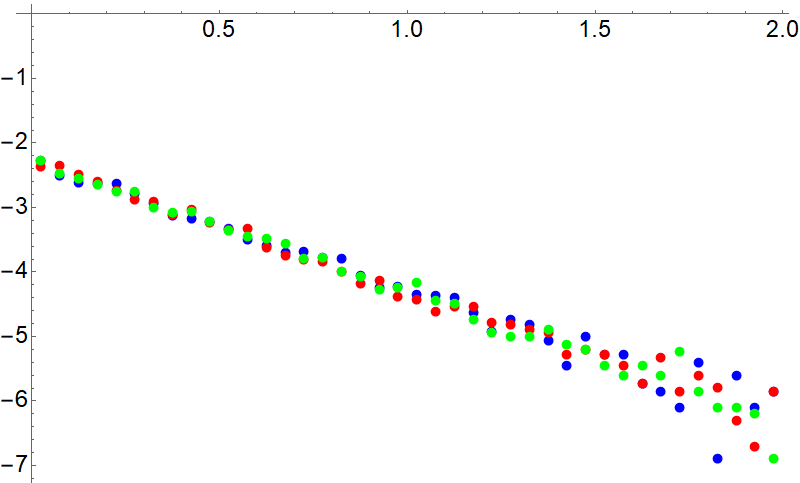}\;%
  \caption{\small The graphs show the gap distribution at time $t=0$ (left)
  and $t=2$ (right) for initial data with equal spacing of the excess
  energy (red) and the Dirichlet 0.01- distribution (blue). At $t=2$
  this is presented in logarithmic scale to show that the distribution
becomes exponential as conjectured. The red and blue dots almost
overlap here. At $t=0$ the red curve represents a Dirac measure, and the
blue curve shows that with the Dirichlet 0.01- distribution, most gaps
are very close to zero, and the excess energy is essentially
distributed to a few very large gaps.}
  \label{fig:sim1d}
\end{figure}

\section{Acknowledgements}

This work was begun in Fall 2016 when  E.C. was visiting the
University of Gothenburg and Chalmers Institute of Technology, to whom
he is grateful for hospitality. Both E.C. and B.W. were able to meet
again at Institute Mittag-Leffler and the Oberwolfach Mathematical
Research Institute, and both authors thank these institutes for their
hospitality. The work of E.C. was partially supported by
U.S. N.S.F. grant DMS-1501007. The work of B.W. was partially
supported by the Swedish Research Council and the Swedish Foundation
for Strategic Resarch.



\def\cprime{$'$} \def\cprime{$'$}
  \def\polhk#1{\setbox0=\hbox{#1}{\ooalign{\hidewidth
  \lower1.5ex\hbox{`}\hidewidth\crcr\unhbox0}}}


\newpage

\section{Supplementary material available on line: additional numerical results}

{ \em  This part is intended as supplementary material in the
  published version of the paper }

\medskip

In this supplementary section we present numerical
simulations intented to illustrate the results of the paper. The first
examples show the stationary distribution for several 
values of $n$ and the parameter $\alpha$. These were obtained by
samplig from the invariant density as described in Section~\ref{sec:empirical}.
The influence of the
boundary $x=0$ is clearly visible, and extends to an 
interval , $x \le C \varepsilon$, where $C$
depends on $\alpha$ . Not surprisingly, the influence of
the boundary is much stronger when $\alpha$ is close to $2$, and
the density is very high.  In this case the distance between two
partcicles is not much larger than the exclusion distance $\varepsilon$.
When  $\alpha$ is close to $2$ the distribution at low energies
is essentially discrete, 
as shown by the very oscillatory behavior of the distribution.  This
is also illustrated  showing the distribution of $x_j$, for
$j=1,2,3,4$ and for 
some larger numbers. When $\alpha=1.8$, for example, it is only at
$j\sim 50$, that the distribution of  
$x_j$ and $x_{j+1}$ begin to overlap. But as the width of the interval
is proportional to $\varepsilon$ the oscillations of the density
$f_n(x)$ for $x$ larger than any 
fixed value $x_0>0$ disappears when $n\rightarrow\infty$. Some results
illustrating this are presented in Fig. ~\ref{fig:n10a10} to
Fig.~\ref{fig:n1000a18}.  

\begin{figure}[H]
  \centering
   \includegraphics[width=0.5\textwidth]{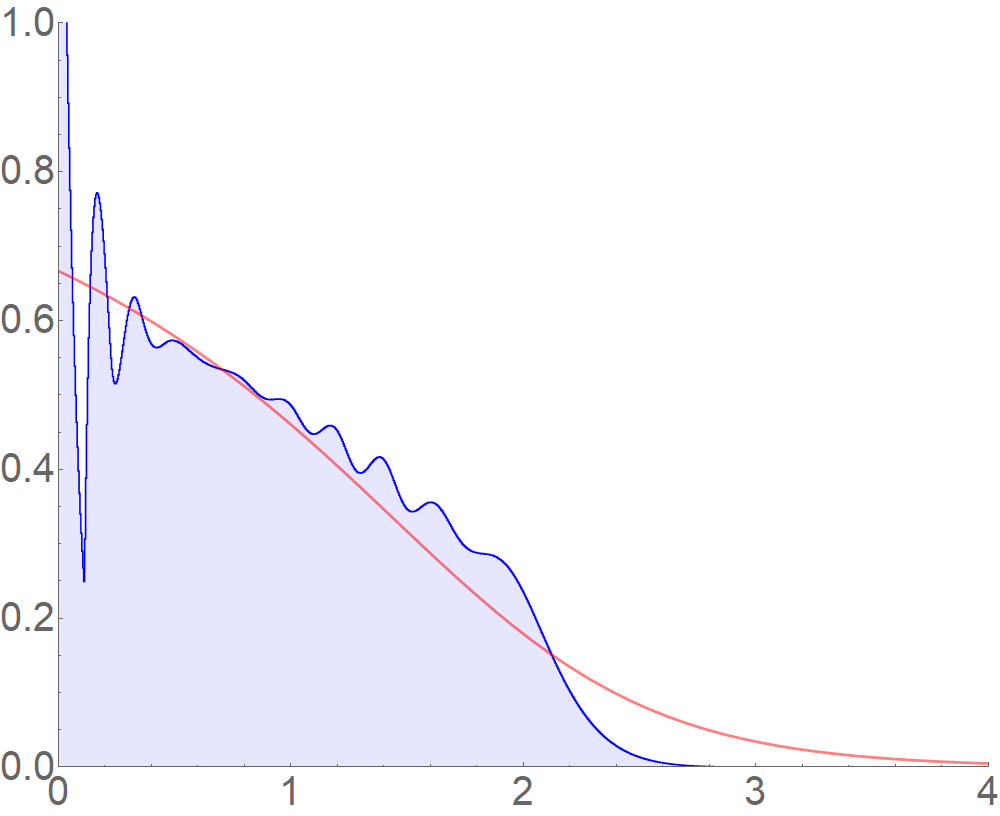}\;
   \begin{minipage}[b]{0.45\linewidth}

    \ 
     
    \includegraphics[width=1.0\textwidth]{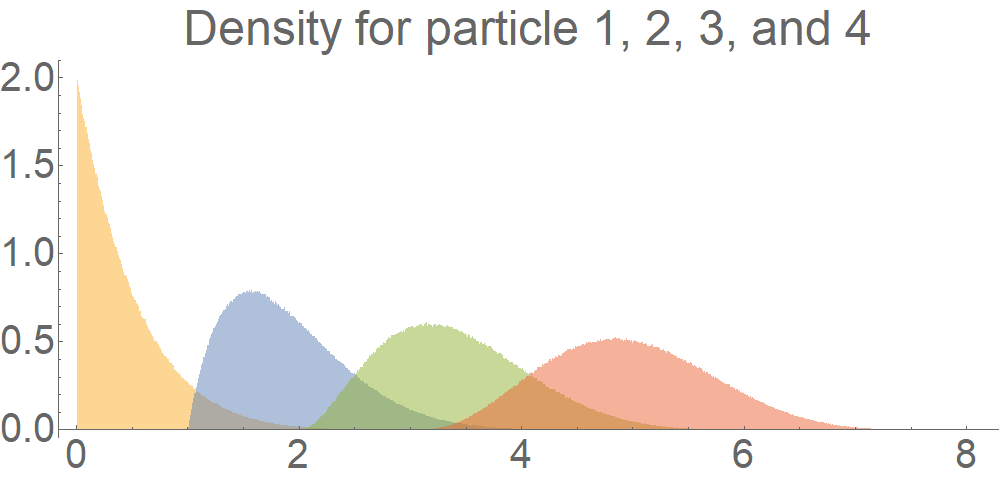}
    
    \includegraphics[width=1.0\textwidth]{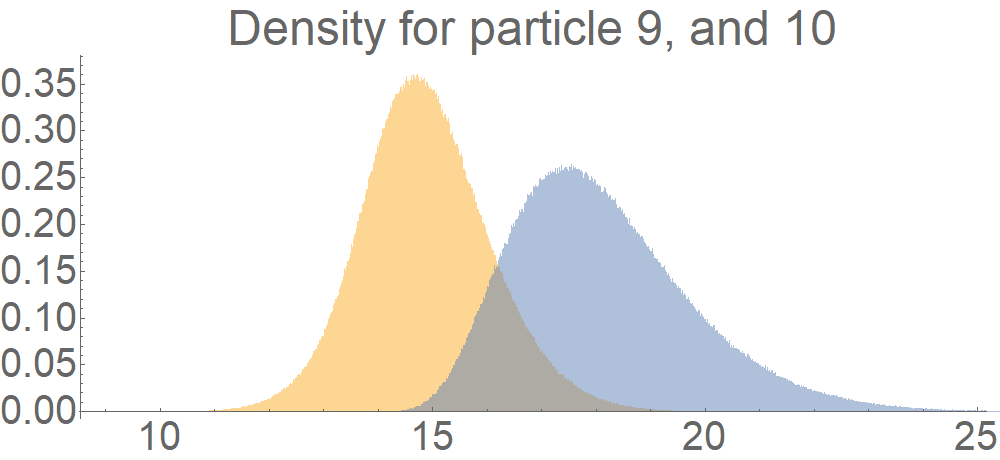}   
  \end{minipage}
 
  \caption{The empirical distribution from $4\times 10^8$ samples from
    $\SEnes$ with $n=10, \alpha=1.0$ (left). In all these images the
    red curve shows the limiting distribution when
    $n\rightarrow\infty$. Right: the distribution of the four
    particles with lowest energy, and with the highest energy taken
    from $2\times 10^6$ samples. The unit in the  $x$-axis is $\epsilon$, the
    minimal energy gap between particles. }  
  \label{fig:n10a10}
\end{figure}

\begin{figure}[H]
  \centering
   \includegraphics[width=0.5\textwidth]{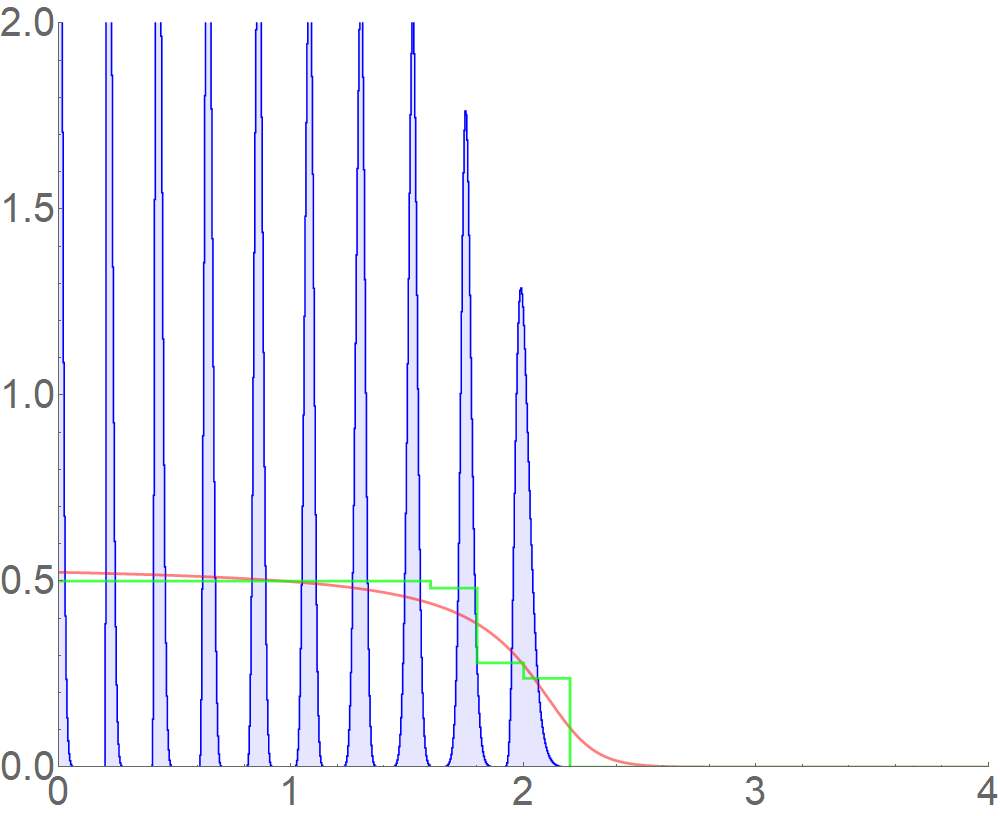}\;
   \begin{minipage}[b]{0.45\linewidth}

    \ 
     
    \includegraphics[width=1.0\textwidth]{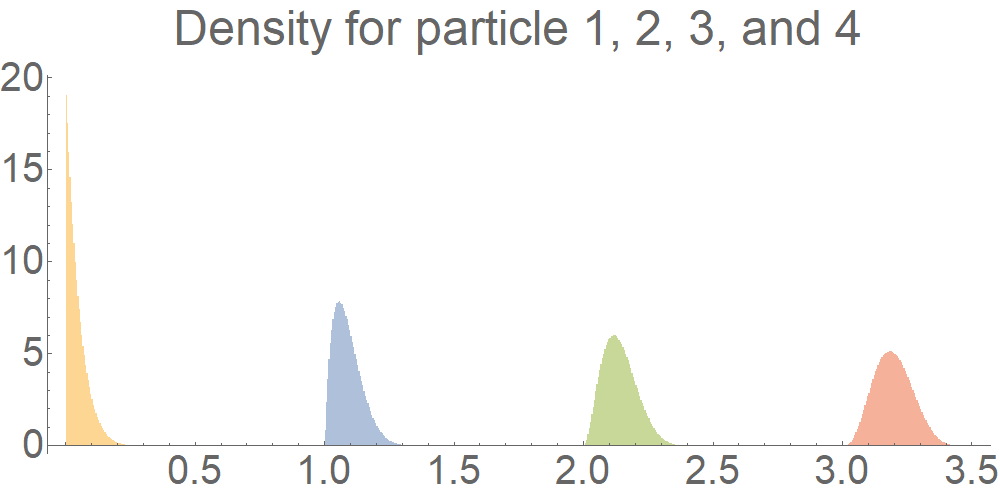}
    
    \includegraphics[width=1.0\textwidth]{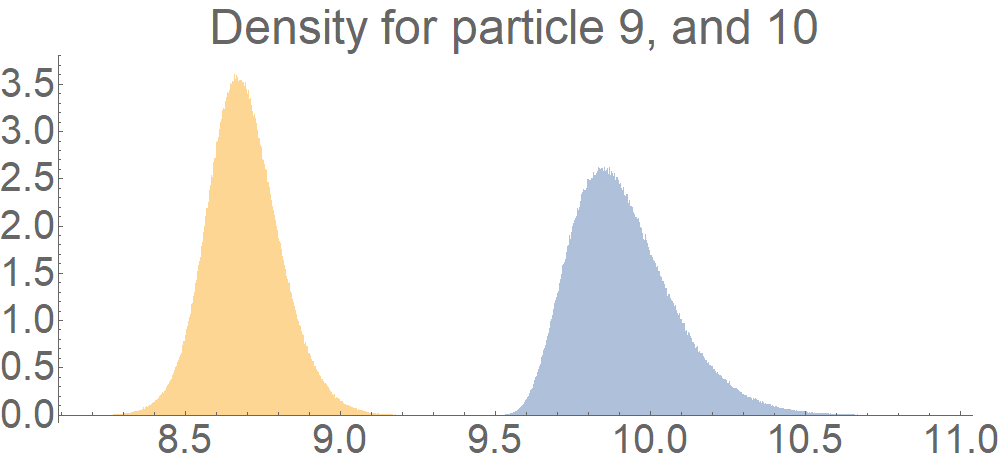}   
  \end{minipage}
 
  \caption{The empirical distribution from $4\times 10^8$ samples from
    $\SEnes$ with $n=10, \alpha=1.8$ (left).
    The green curve shows averages of the
    empirical distribution over intervals of size $0.2$. Right: the distribution of the four
    particles with lowest energy, and with the highest energy taken
    from $2\times 10^6$ samples.}  
  \label{fig:n10a18}
\end{figure}

\begin{figure}[H]
  \centering
   \includegraphics[width=0.5\textwidth]{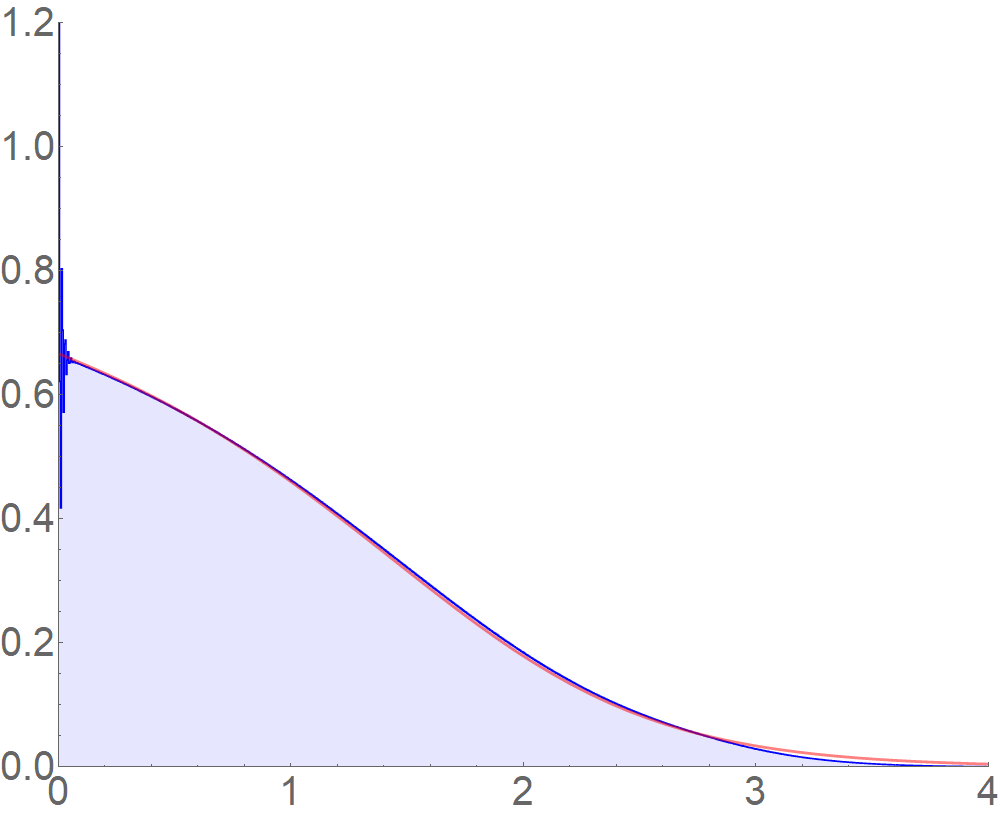}\;
   \begin{minipage}[b]{0.45\linewidth}

    \ 
     
    \includegraphics[width=1.0\textwidth]{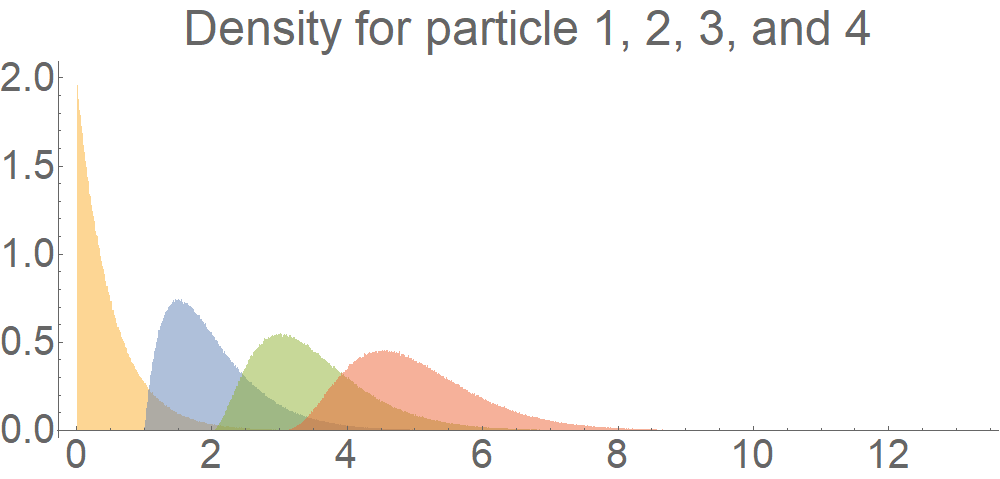}
    
    \includegraphics[width=1.0\textwidth]{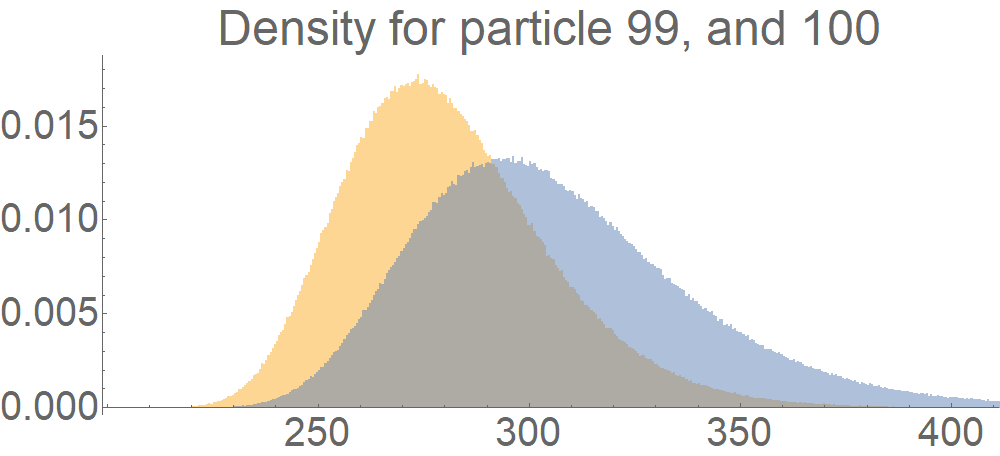}   
  \end{minipage}
 
  \caption{The empirical distribution from $ 10^8$ samples from
    $\SEnes$ with $n=100, \alpha=1.0$ (left). Right: the distribution of the four
    particles with lowest energy, and with the highest energy taken
    from $2\times 10^6$ samples.}  
  \label{fig:n100a10}
\end{figure}

\begin{figure}[H]
  \centering
   \includegraphics[width=0.5\textwidth]{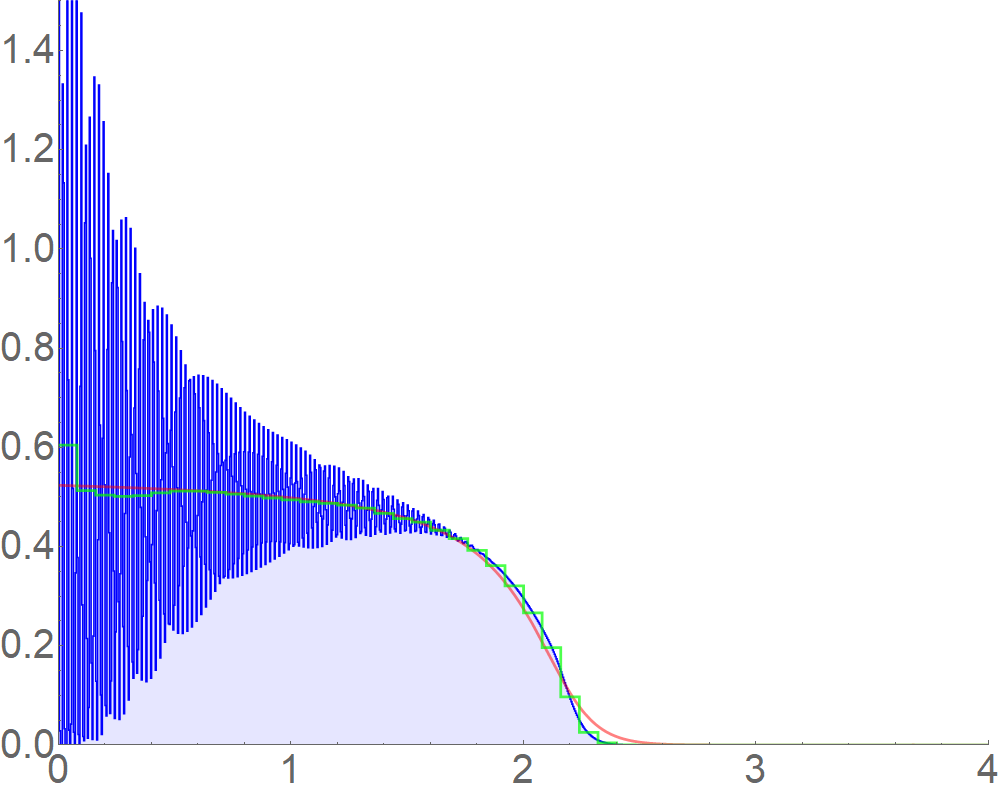}\;
   \begin{minipage}[b]{0.45\linewidth}

    \ 
     
    \includegraphics[width=1.0\textwidth]{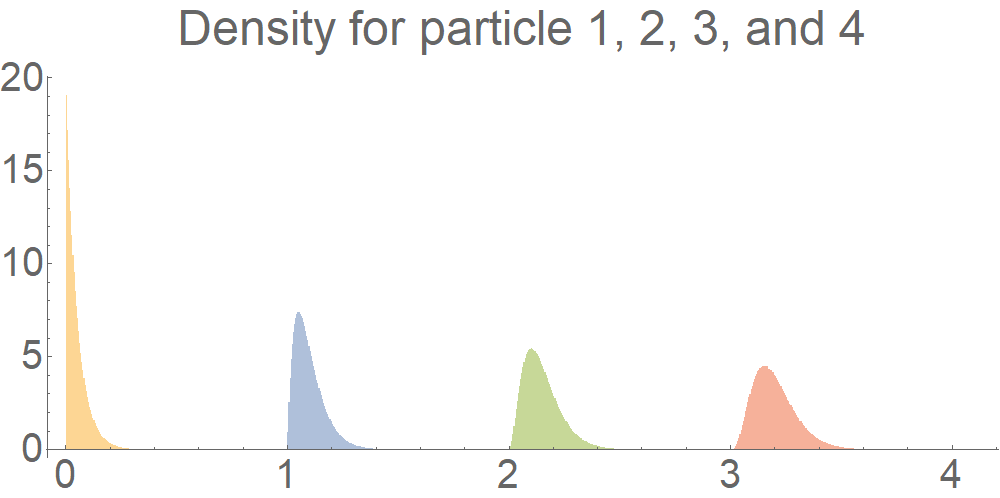}
    
    \includegraphics[width=1.0\textwidth]{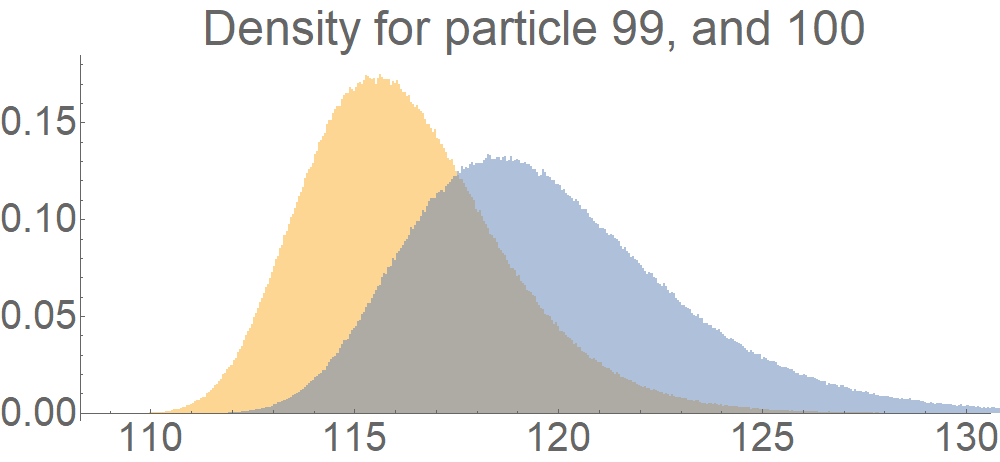}   
  \end{minipage}
 
  \caption{The empirical distribution from $ 10^8$ samples from
    $\SEnes$ with $n=100, \alpha=1.8$ (left).  The green curve shows
    averages of the 
    empirical distribution over intervals of size $0.03$.  Right: the
    distribution of the four 
    particles with lowest energy, and with the highest energy taken
    from $2\times 10^6$ samples.}  
  \label{fig:n100a18}
\end{figure}


\begin{figure}[H]
  \centering
   \includegraphics[width=0.5\textwidth]{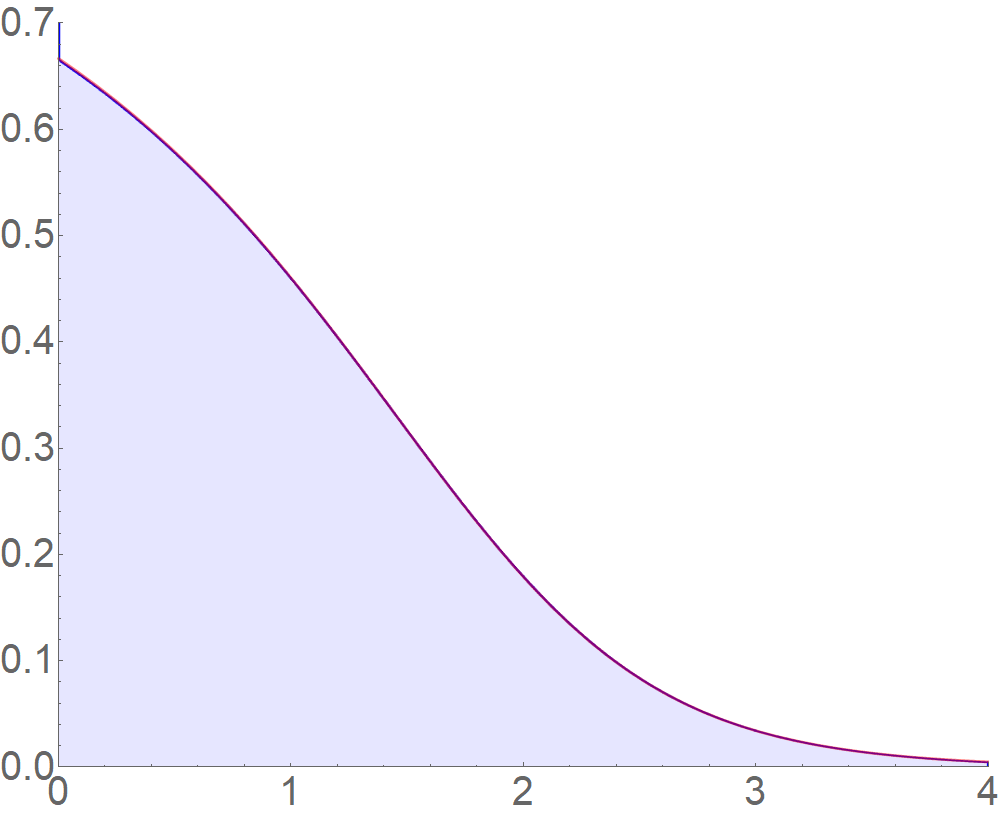}\;
   \begin{minipage}[b]{0.45\linewidth}

    \ 
     
    \includegraphics[width=1.0\textwidth]{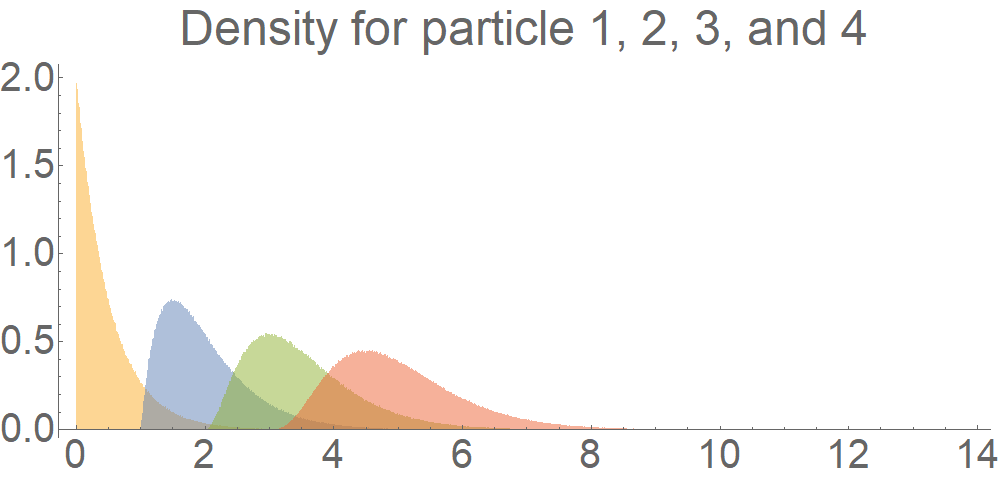}
    
    \includegraphics[width=1.0\textwidth]{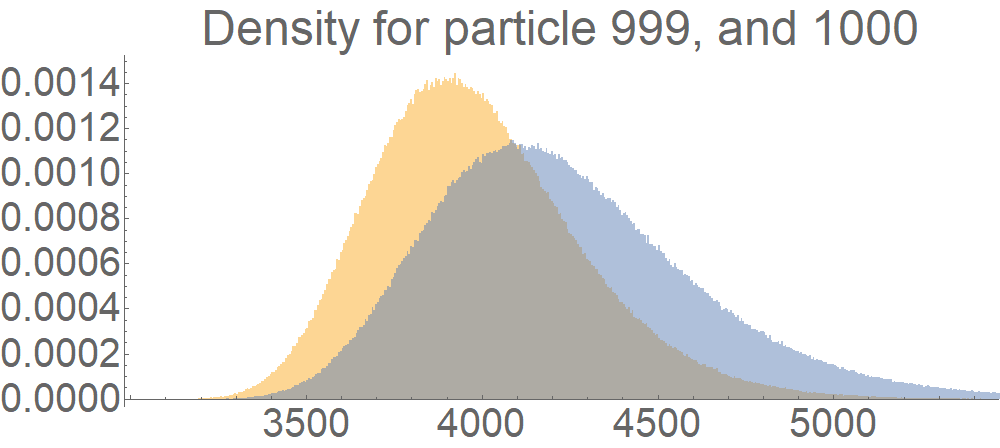}   
  \end{minipage}
 
  \caption{Left: The empirical distribution from $ 10^8$ samples from
    $\SEnes$ with $n=1000, \alpha=1.0$.   Right: the
    distribution of the four 
    particles with lowest energy, and with the highest energy taken
    from $2\times 10^6$ samples.}  
  \label{fig:n1000a10}
\end{figure}

\begin{figure}[H]
  \centering
   \includegraphics[width=0.5\textwidth]{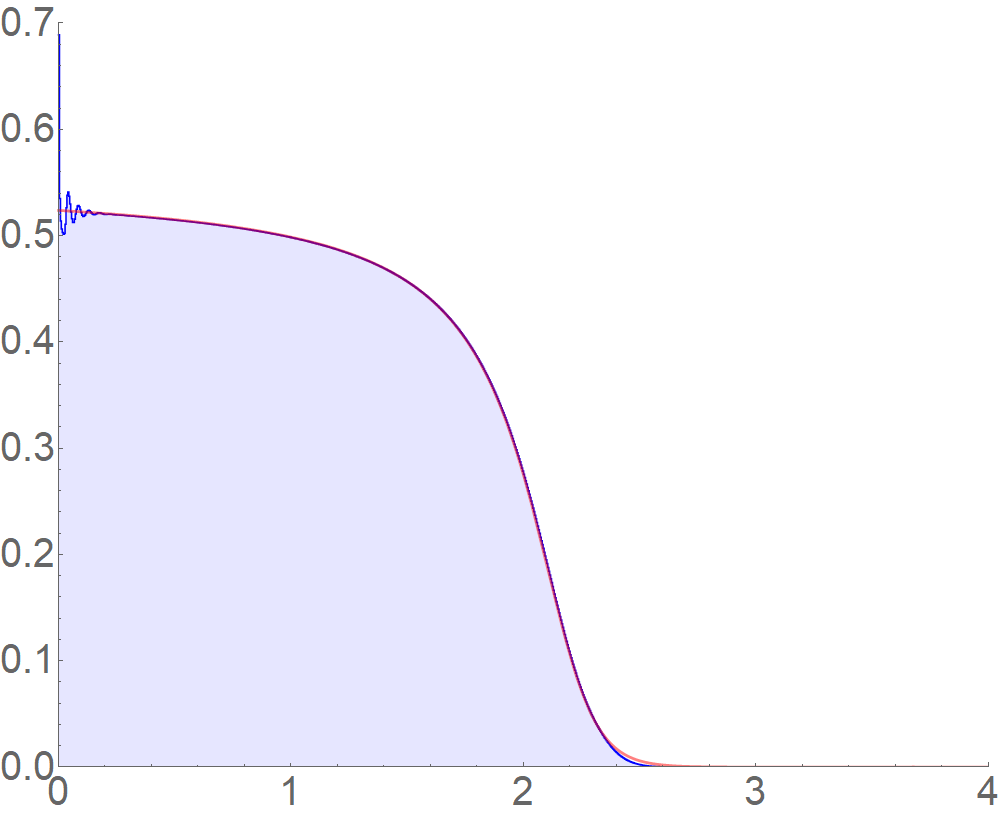}\;
   \begin{minipage}[b]{0.45\linewidth}

    \ 
     
    \includegraphics[width=1.0\textwidth]{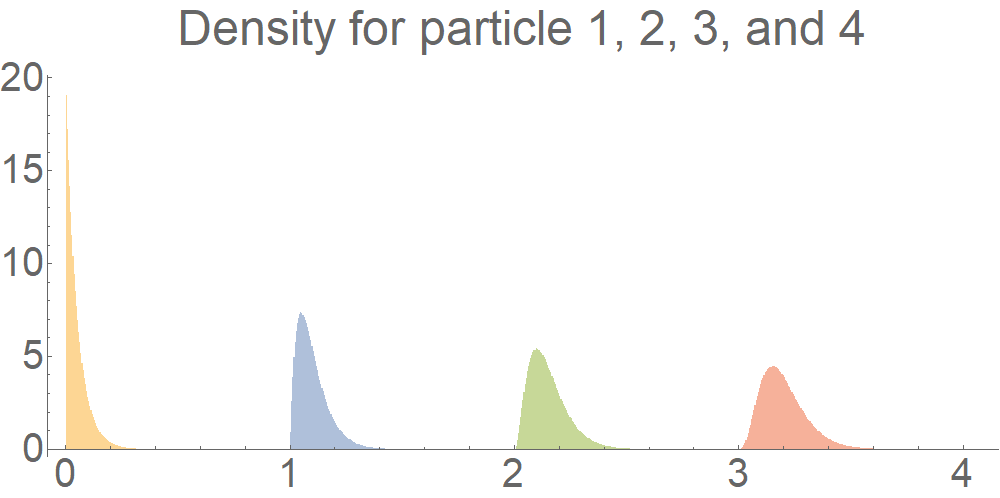}
    
    \includegraphics[width=1.0\textwidth]{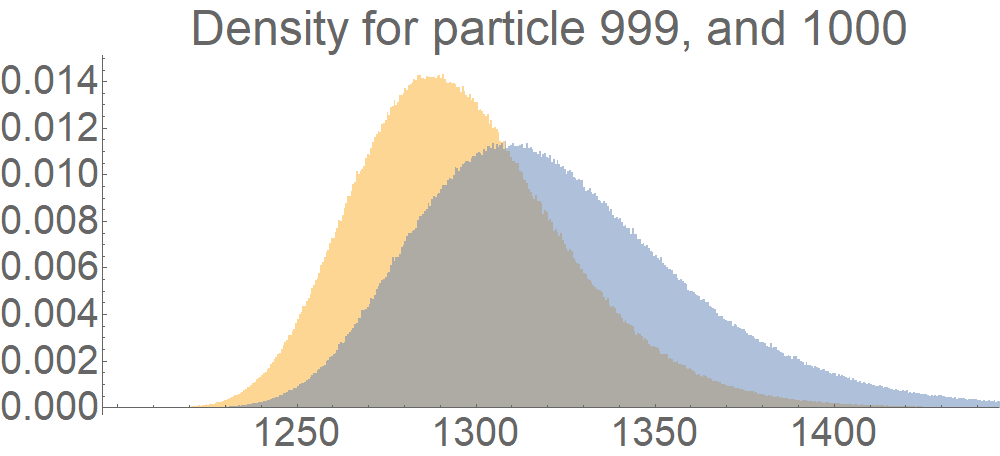}   
  \end{minipage}
 
  \caption{Left: The empirical distribution from $ 10^8$ samples from
    $\SEnes$ with $n=1000, \alpha=1.8$.   Right: the
    distribution of the four 
    particles with lowest energy, and with the highest energy taken
    from $2\times 10^6$ samples.}  
  \label{fig:n1000a18}
\end{figure}

The following  simulation results concern the dynamical process close to
equilibrium. These simulations are motivated partially by the fourth
question presented in the introduction:

{\em 
(4) At which energy levels in equilibrium do collisions occur at a
rate bounded away from zero, and at which energy levels are the
collisions ``frozen out''.
}

The first series of figures compare simulations around the stationary
state with $\alpha=1.8$ and with $\alpha=1.0$. The simulations were
carried out with the number of particles $n=1000$, a number which is
sufficiently large to give a good agreement with the limiting
distribution, and a reasonable computational time. The initial
configuration was sampled from the uniform distribution on
$\SEnes$. The simulation was then run up to time $ T_{\mbox{end}}=10^6$,
and which with $ n=10^3$ implies that the number of (attempted) jumps
during the simulation was $10^{9}$. With $\alpha=1.8$, the particle
distribution is very dense, and therefore the number of successful
jumps was only $5.6\times 10^5$, while with $\alpha=1.0$ the number of
successful jumps was $6.6 \times 10^7$.

To begin, Figure~\ref{fig:n1000dyneqA} shows a time series of the four
particles with lowest  
energy (left: $\alpha=1.8$, right: $\alpha=1.0$), and
Figure~\ref{fig:n1000dyneqB} 
shows histograms of the particle positions at these times, to be
compared with Fig~\ref{fig:n1000a18} and Fig~\ref{fig:n1000a10}.

\begin{figure}[H]
  \centering
   \includegraphics[width=0.45\textwidth]{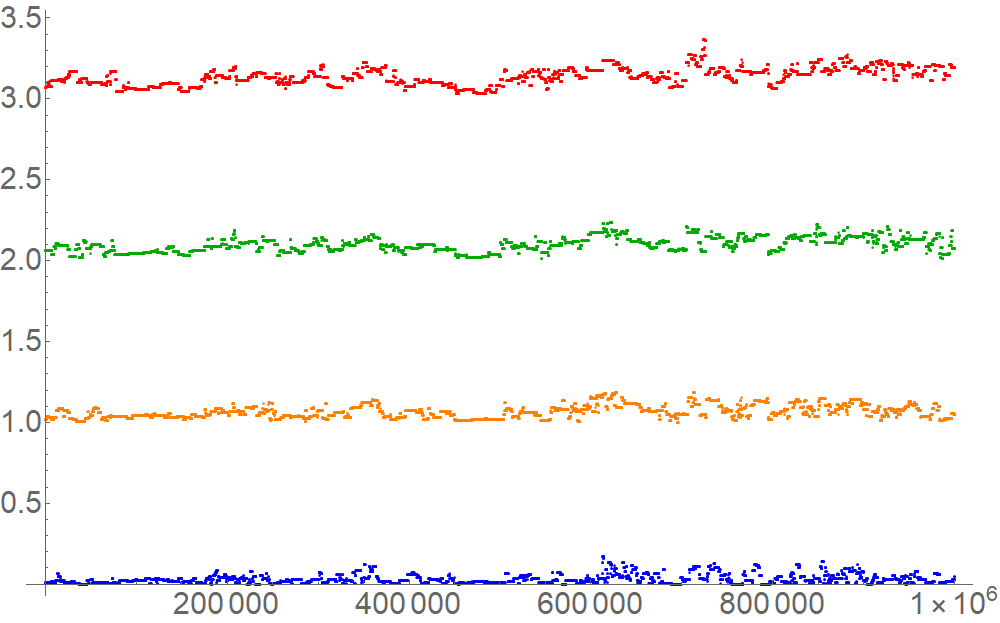}\qquad
   \includegraphics[width=0.45\textwidth]{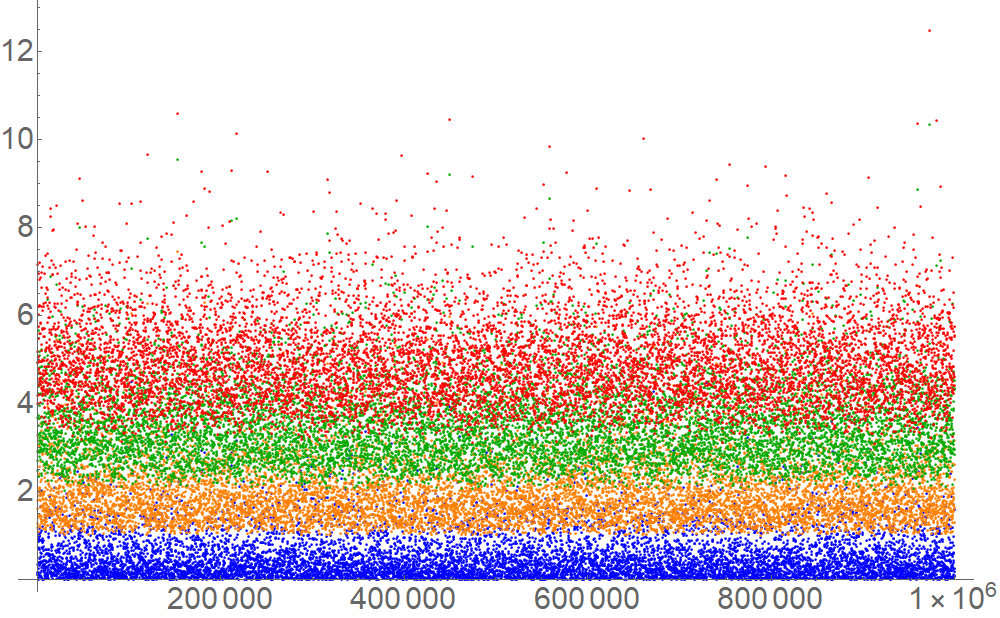}   
  \caption{The state of the four particles with lowest energy for
    $\alpha=1.8$ (left) and $\alpha=1.0$ (right). The state was
    sampled with an interval $100$, and hence each particle is
    represented by 10000 points in the graphs. The unit of the
    vertical axis is $\epsilon$, the minimal energy gap.}   
  \label{fig:n1000dyneqA}
\end{figure}

\begin{figure}[H]
  \centering
   \includegraphics[width=0.45\textwidth]{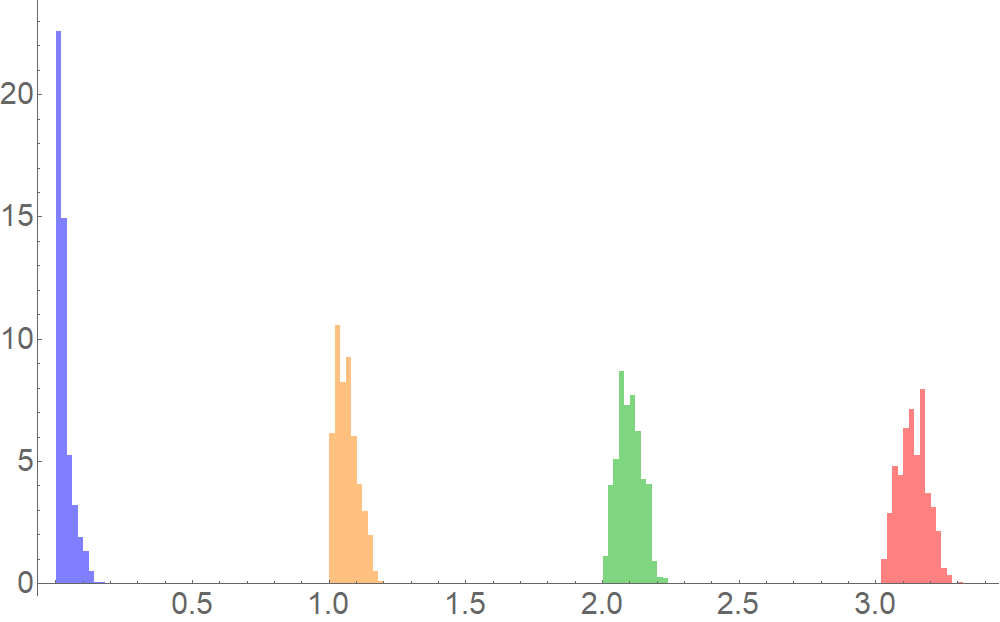}\qquad
   \includegraphics[width=0.45\textwidth]{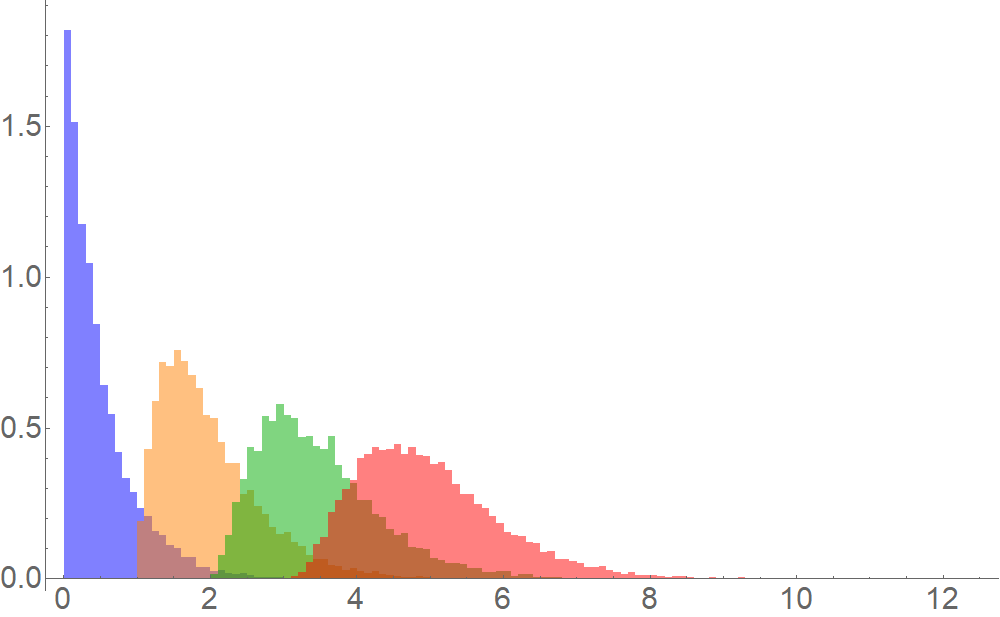}   
  \caption{Histograms of the 10000 samples of each particle energy, as
    shown in  Fig:~\ref{fig:n1000dyneqA}. These graphs should be
    compared with Fig.~\ref{fig:n1000a18} and Fig.~\ref{fig:n1000a10}.}  
  \label{fig:n1000dyneqB}
\end{figure}

If instead of keeping track of the particles ordered from lowest
energy, we follow some tagged particles along the flow, a very
different picture is seen. Fig.~\ref{fig:n1000dyneqC} shows the paths
of the particles that initially had the lowest and highest energy, and
the corresponding histograms are given in
Fig.~\ref{fig:n1000dyneqD}. While in the picture to the right, with
$\alpha=1$, the long time behavior of the two tagged particles are
the same, and generate histograms just like the equilibrium density,
the pictures to the left, with $\alpha=1.8$ the particle with lowest
initial energy always stays at the bottom, and the particle that
initially had the highest energy, while moving around quite a lot
certainly does not regenerate the equilibrium density.   

\begin{figure}[H]
  \centering
   \includegraphics[width=0.45\textwidth]{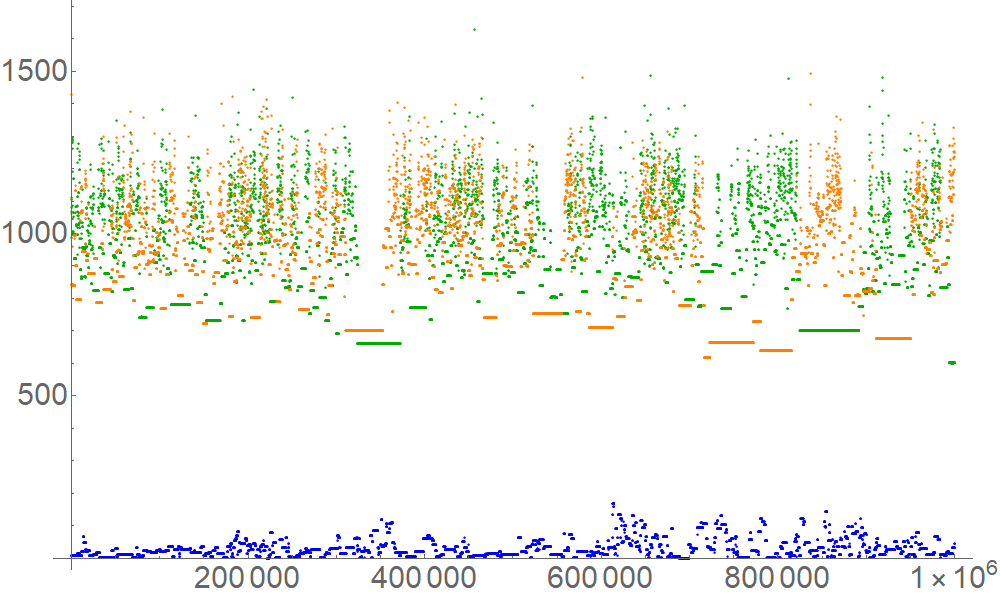}\qquad
   \includegraphics[width=0.45\textwidth]{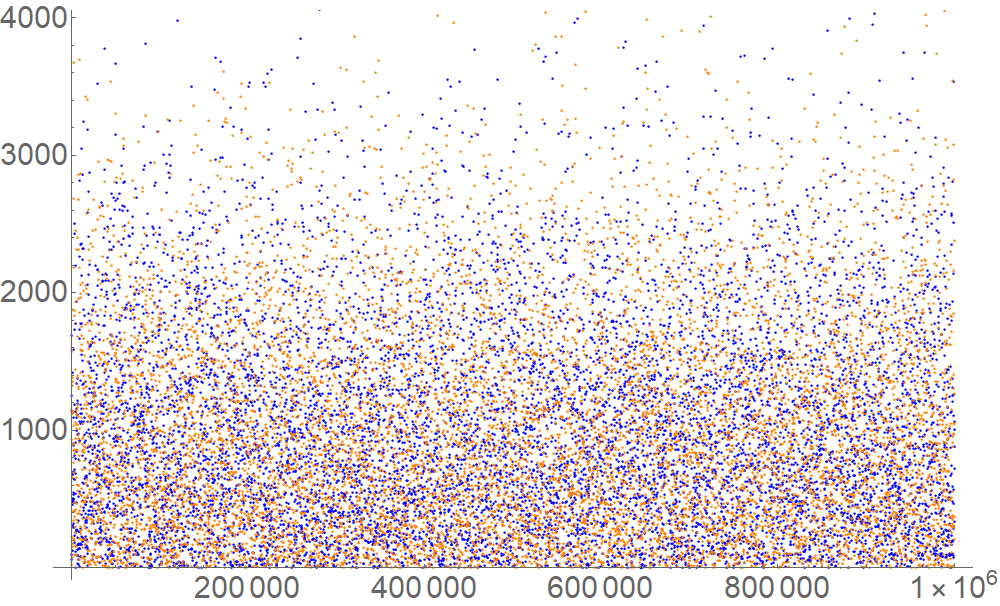}   
  \caption{Sample paths of the tagged  particles with lowest (blue)  and highest
    energy (orange) in the  initial configuration, for $\alpha=1.8$
    (left) and $\alpha=1.0$ (right). To the left, also the path of the
  second highest energy is plotted (green), and the blue path is
  multiplied by a factor 1000 for clarity.}  
  \label{fig:n1000dyneqC}
\end{figure}

\begin{figure}[H]
  \centering
   \includegraphics[width=0.45\textwidth]{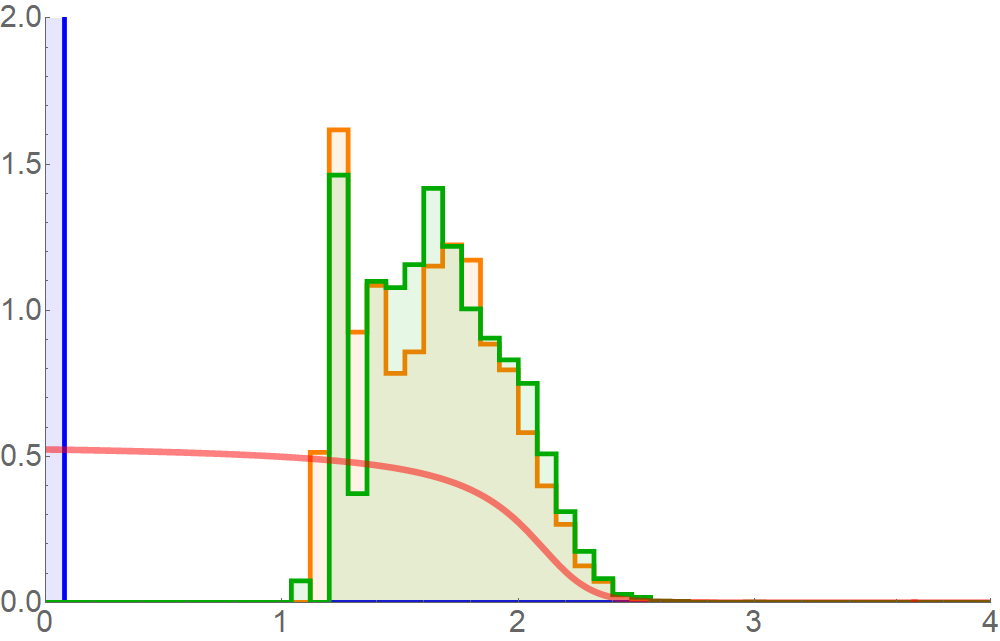}\qquad
   \includegraphics[width=0.45\textwidth]{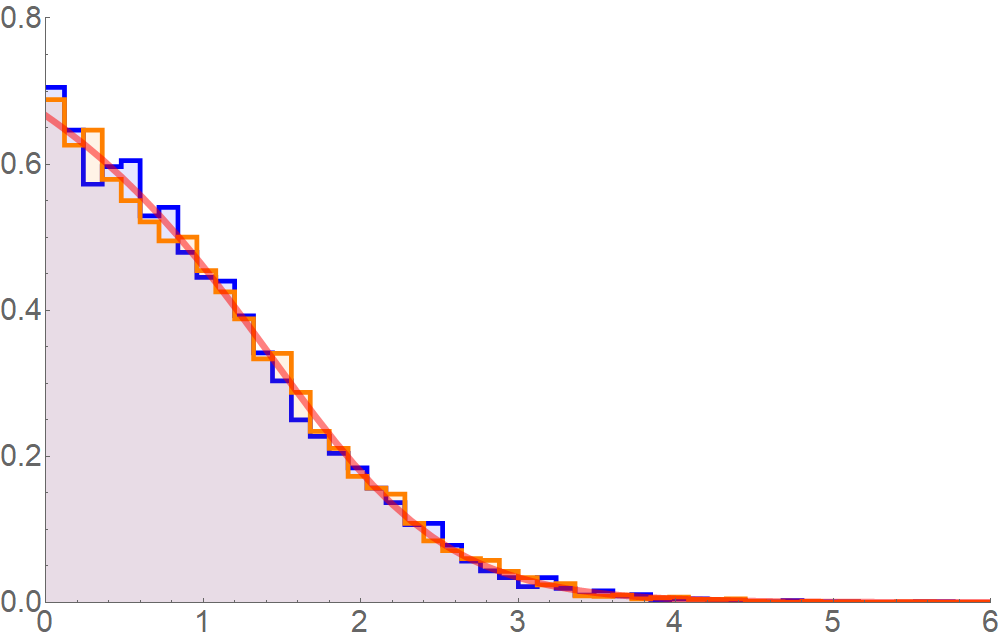}   
  \caption{Histograms of the sample paths shown in
    Fig.~\ref{fig:n1000dyneqC}. For $\alpha=1.0$ (right), the two path
    histograms correspond very well with the theoretically computed equilibrium
    density for $n=\infty$ (red curve), whereas for$\alpha=1.8$,
    neither of the tagged particles has a histogram which is similar to
    the equilibrium density\,.  }  
  \label{fig:n1000dyneqD}
\end{figure}

How this comes about is partially illustrated in
Fig.~\ref{fig:n1000dyneqE} which shows two scatterplots of the points
$(x_k,x_k*)$, {\em i.e.} the energy before and after a collision for
one of the particles involved in a collision. The graphs contain
200000 points, and clearly show that with $\alpha=1.8$ there is not
enough space at low energies for other outcomes of a collision than to
fall back in the same energy gap as where the particle was before the
collision.  

\begin{figure}[H]
  \centering
   \includegraphics[width=0.45\textwidth]{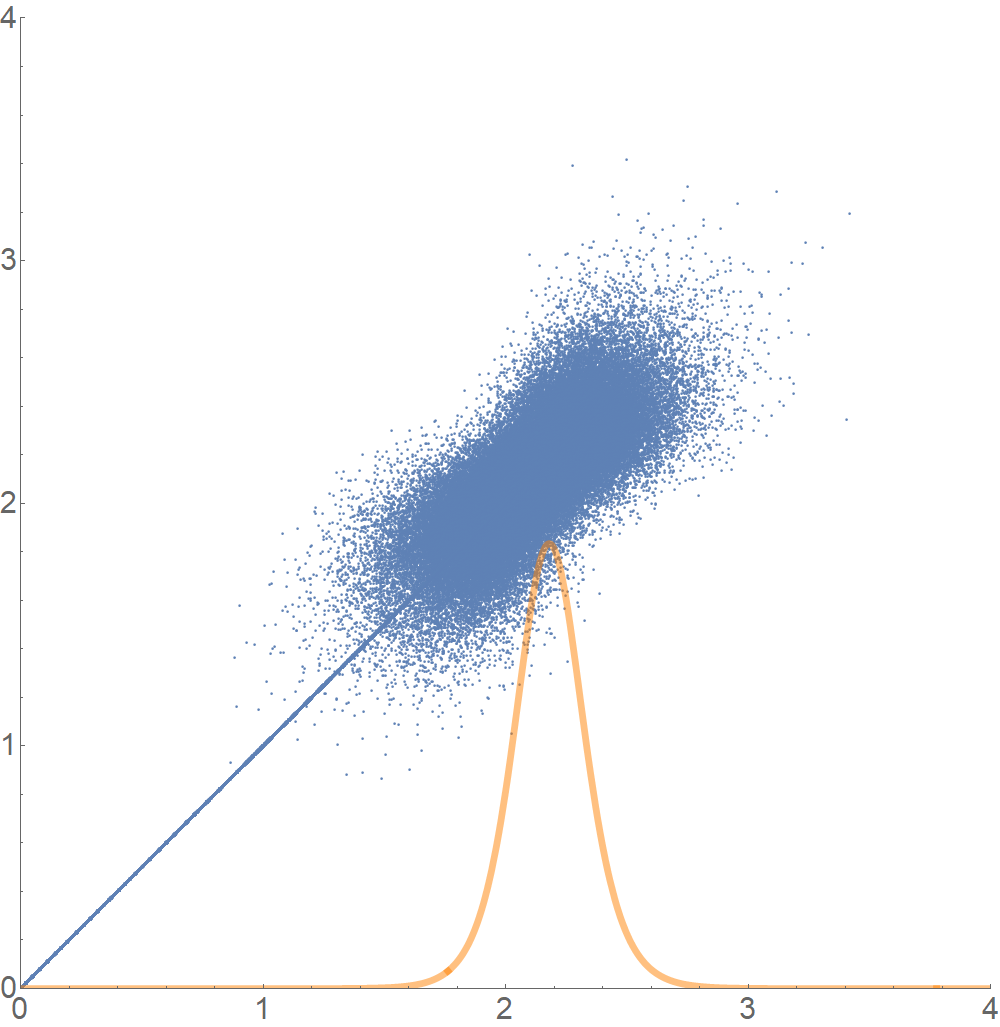}\qquad
   \includegraphics[width=0.45\textwidth]{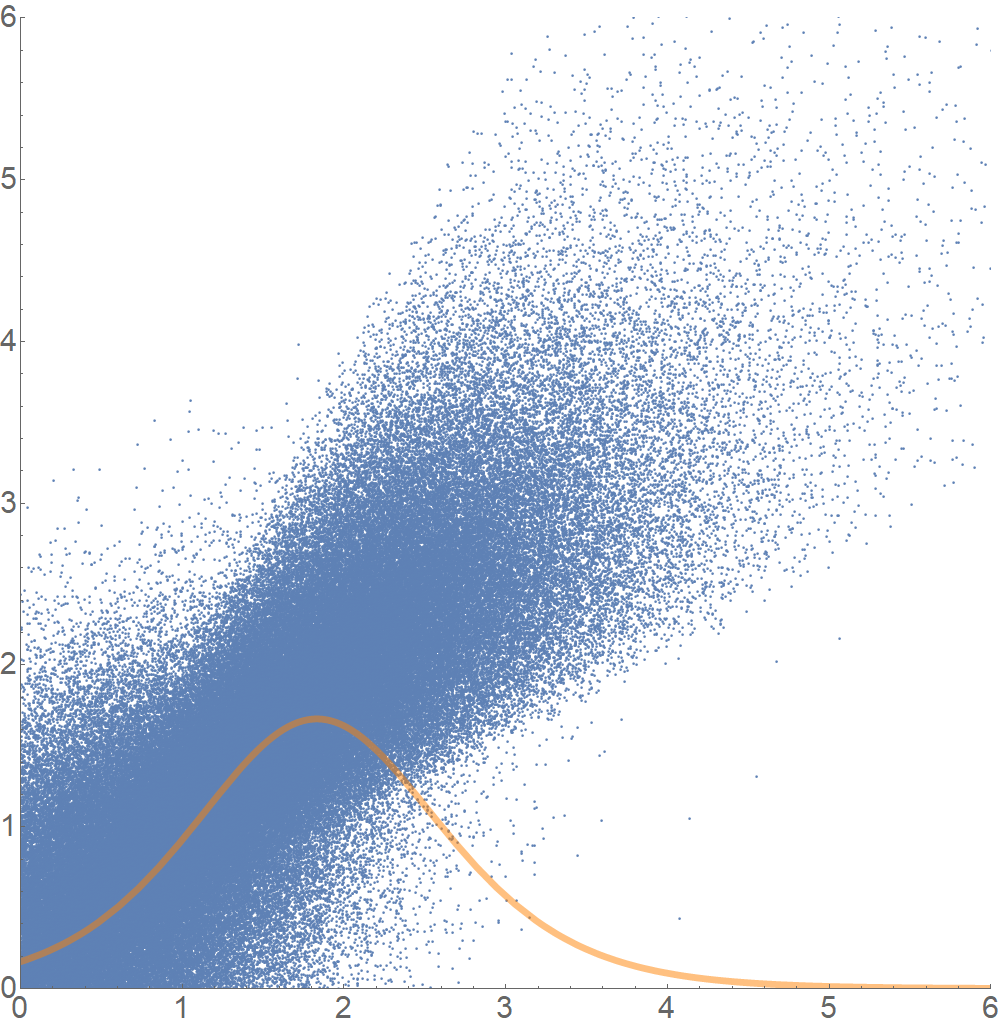}   
  \caption{A plot of one of the pairs $(x_j,x_j^*)$, the energy before
  and after collision for one of the particles involved in the first
  100000 collisions in a simulation. In the left, $\alpha=1.8$, and to
  the right, $\alpha=1.0$. The orange curve shows
  $f(x) \Pi(\alpha f(x))$. The plots appear to be symmetric around
  $x=x^*$, as one should expect if detailed balance holds. }   
  \label{fig:n1000dyneqE}
\end{figure}

The following results come from  simulations starting from initial
distributions far from equlibrium. The dynamical process depends
strongly on the value of $\alpha$, 
because in areas with high particle densites, very few collisions
attempts will actually result in a change. The figures~\ref{fig:ginit_t0},
\ref{fig:ginit_t01}, and~\ref{fig:ginit_t10} are the results of
simulations where the initial data are taken as the function $g(x)$ as
shown to the right of Figure~\ref{fig:excess} of the main article. The initial
function is plotted in red and the equilibrium density in green. The
black step functions shows the empirical histgram from the simulation,
and the blue dots illustrate the variation around the histogram
means. With a higher density ($\alpha$ large) the rate of convergence
to equlibrium is slower.

%
%
%
\begin{figure}[H]
  \centering
 
    \includegraphics[width=0.44\textwidth]{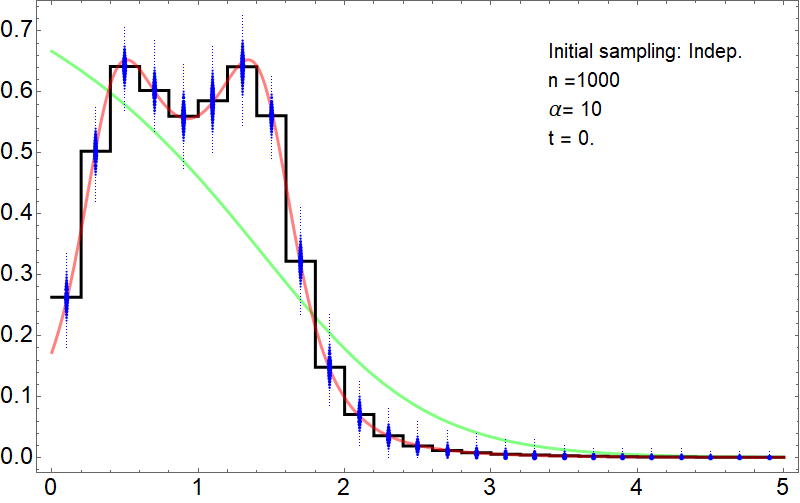}
\quad   
    \includegraphics[width=0.44\textwidth]{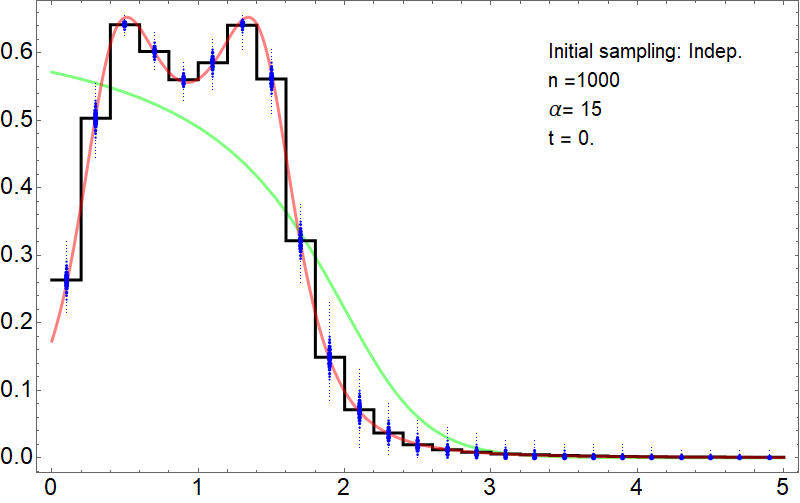}   

  \caption{The initial data is the function $g$ as in
    Fig.~\ref{fig:excess}, with $\alpha=1.0$ to the left and
    $\alpha=1.5$ to the right. The blue points show the distribution
    of simulation results for each histogram bin. The number of
    particls is 1000 and the 
    number independent samples is 5000. }  
  \label{fig:ginit_t0}
\end{figure}

\begin{figure}[H]
  \centering
 
    \includegraphics[width=0.45\textwidth]{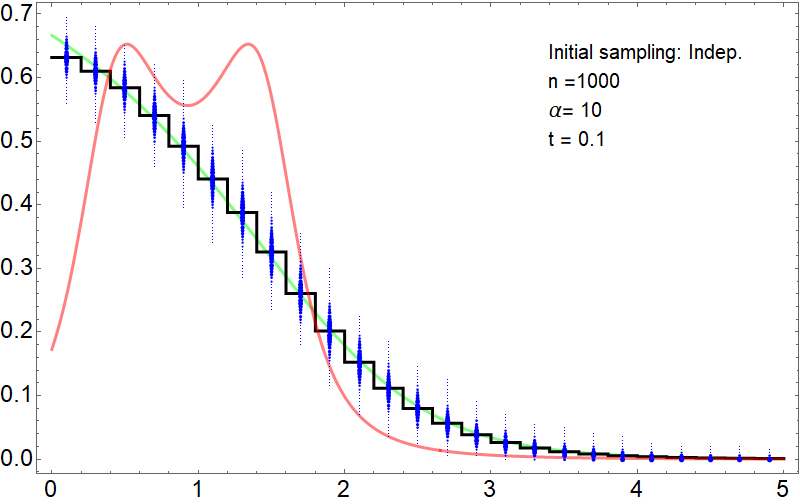}
\quad   
    \includegraphics[width=0.45\textwidth]{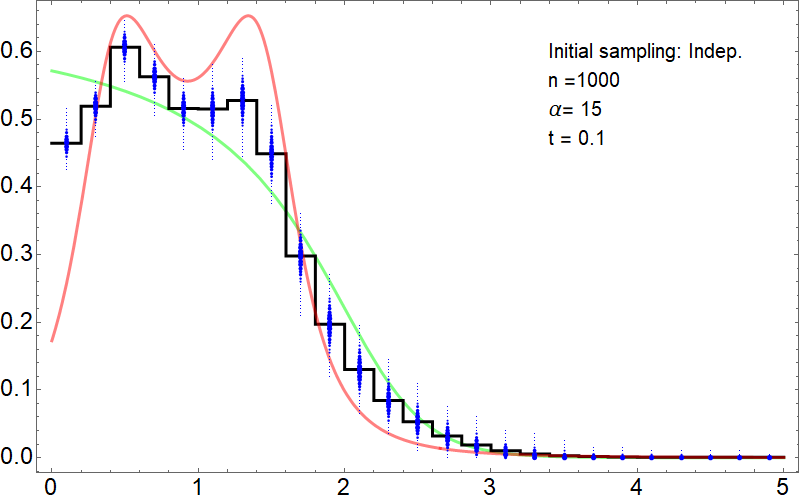}   

  \caption{At time $t=0.1$, the result is already very close to
    equilibrium when $\alpha=1.0$ (left), but not when $\alpha=1.5$ (right) }  
  \label{fig:ginit_t01}
\end{figure}

\begin{figure}[H]
  \centering
 
    \includegraphics[width=0.4\textwidth]{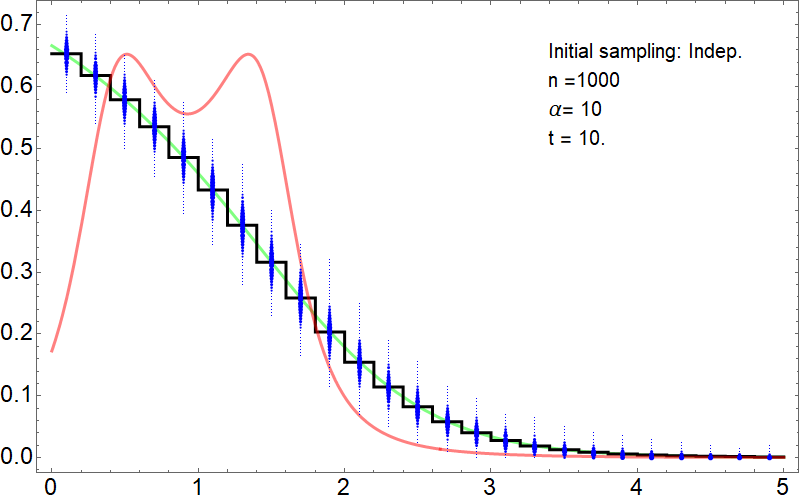}
    \includegraphics[width=0.4\textwidth]{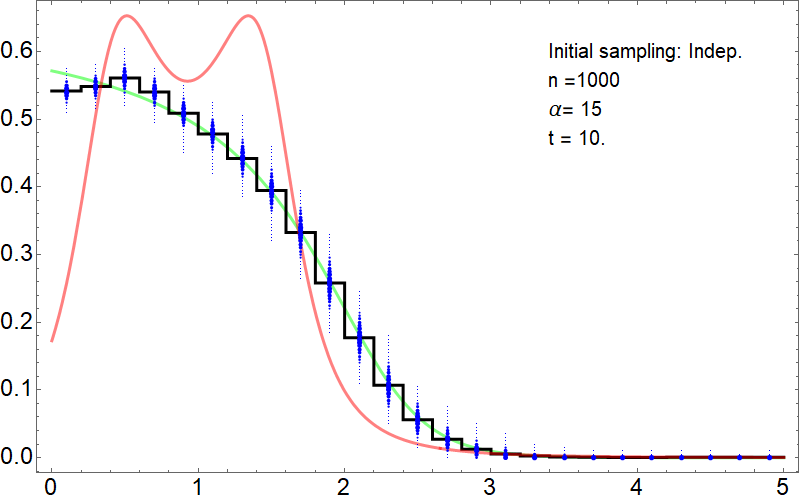}   

  \caption{At time $t=10$, the solution has still not converged to
    equilibrium  when $\alpha=1.5$ (right), because here, for small
    $x$ the particle density is close to its maximum, and there is not
  much space for collisions to take place.}  
  \label{fig:ginit_t10}
\end{figure}

An more extreme situation is shown in the final two examples. In
Figure~\ref{fig:dynnoneq1N1000}, the initial 
density is a single
step function, where all the particles are equally spaced, and pushed
as far towards the lower energy as possible. As discussed in
Section~\ref{sec:prechaos} of the paper, such sequences are chaotic
but not in the detailed sense.  The simulations were carried out with $n=10000$
particles. And we see that with $\alpha=1.8$ ( left of
Fig.~\ref{fig:dynnoneq1N1000} ),
the single step function is nearly a frozen state, there is hardly any
change over the simulation period, which in this case is
$T=1000$. Initially the only possible jumps 
are those where the two involved particles fall back to essentially
the same energy level they had before the collision. This is a slow
diffusive motion, that may be compared with, and perhaps possible to
analyze in the same way as models for competing particle systems and
rank based interacting
diffusions~\cite{PalPitman2008,Shkolnikov2012,Reygner2015}. On the
other hand, with $\alpha=1.0$, there is enough space between the
particles to allow for long jumps, and the convergence towards
equilibrium is much faster so that already after time $T=10$ the
distribution is close to equilibrium (the right side of the same figure).

In the final example, shown in Figure~\ref{fig:dynnoneq2N1000} the
initial configuration is one where the first $n-1$ are pushed even closer together to 
lower energies, and the $n^{\mbox{\tiny th}}$ particle is given sufficiently
high energy to give the same total energy.  Here we see a convergence towards
equilibrium also when $\alpha=1.8$ (left), and faster for
$\alpha=1.0$. What happens is that when 
the particle with highest energy (outside the range of the graph)
interacts with one of the other particles, the group of particles will
be torn apart, leaving space for long jumps, and then the convergence
to equilibrium can be seen also there.

Of course one can construct a
sequence of interactions that transforms the initial data of
Fig.~\ref{fig:dynnoneq1N1000} into the initial data of
Fig.~\ref{fig:dynnoneq2N1000}, and therefore one would expect
convergence to equilibrium also from this initial configuration, but
it may take a very long time to happen.

\begin{figure}[H]
  \centering
   \includegraphics[width=0.45\textwidth]{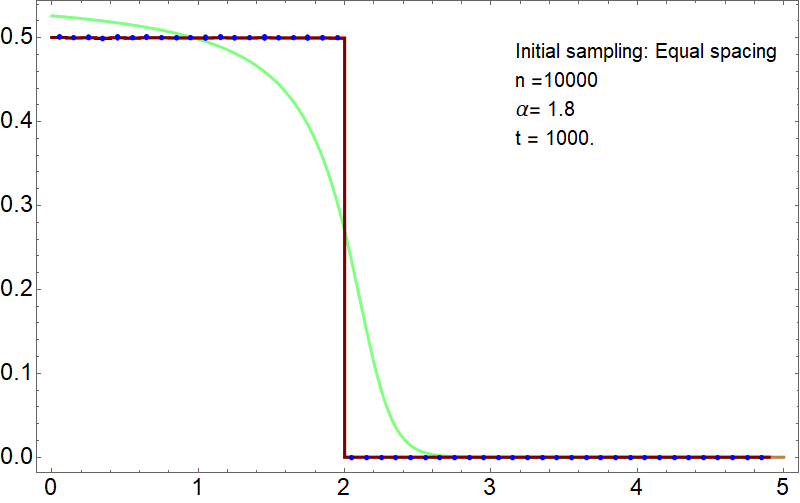}\qquad
   \includegraphics[width=0.45\textwidth]{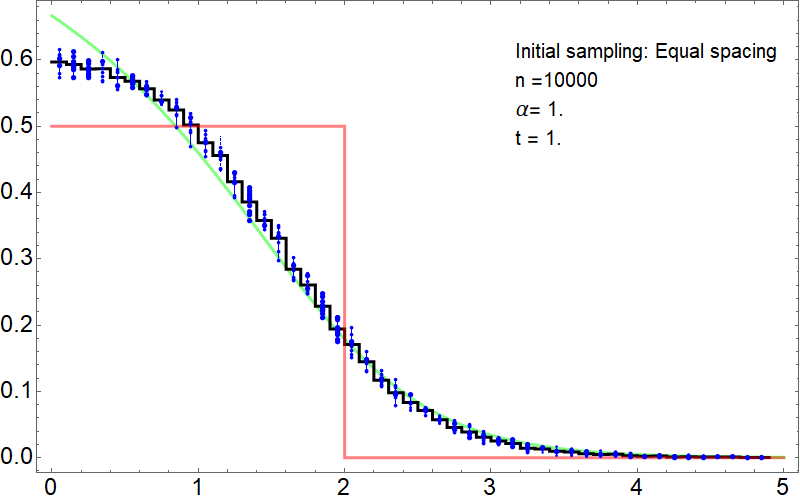}   
  \caption{The black step function shows the empirical histogram after
  time $T=1000$ for $\alpha=1.8$ (left), and after time $T=1.0$ for
  $\alpha=1.0$ (right). In the initial configuration the $x_i$ are put
at equal distance, corresponding to the step function represented in red.}  
  \label{fig:dynnoneq1N1000}
\end{figure}

\begin{figure}[H]
  \centering
   \includegraphics[width=0.45\textwidth]{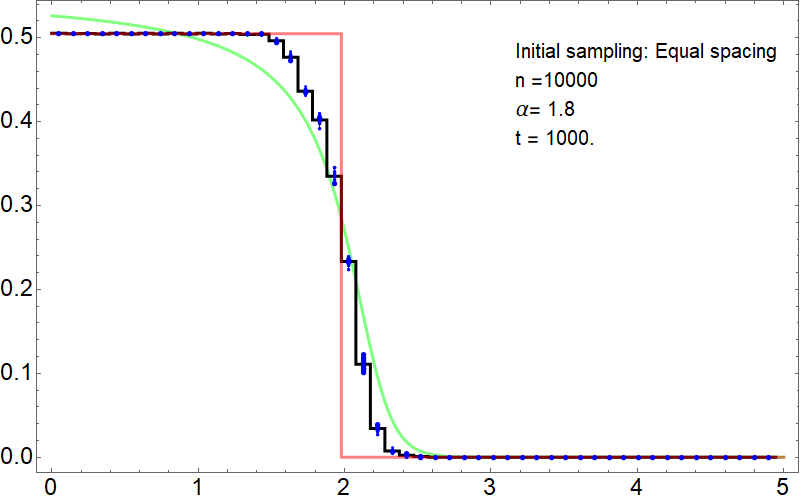}\qquad
   \includegraphics[width=0.45\textwidth]{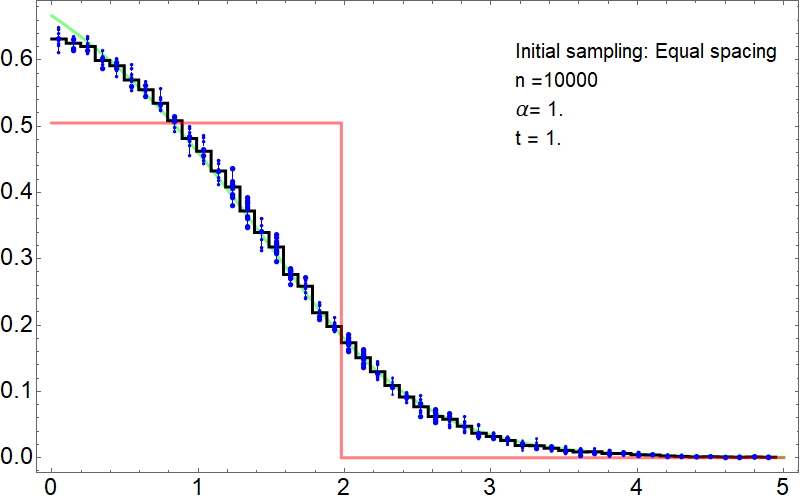}   
  \caption{The graphs show simulations results as in
    Fig.~\ref{fig:dynnoneq1N1000} , except that the initial data
    consists of a initial step function, constructed as above, but
    with 99\% of the mass compressed to a tighter configuration, and
    the remaining particles put at a higher energy (near $x=11$) to
    keep the same initial energy. Here we observe a convergence
    towards equilibrium also when $\alpha=1.8$, although this happens
    very slowly}
  \label{fig:dynnoneq2N1000}
\end{figure}

\end{document}